\theoremstyle{plain}
\newtheorem{theorem}{Theorem}
\newtheorem{corollary}[theorem]{Corollary}
\newtheorem{lemma}[theorem]{Lemma}
\newtheorem{proposition}[theorem]{Proposition}
\theoremstyle{definition}
\newtheorem{remark}[theorem]{Remark}
\newtheorem*{remark*}{Remark}
\begin{document}
\title[Persistence of one-dimensional AR(1)-sequences]
{Persistence of one-dimensional AR(1)-sequences}

\author[Hinrichs]{G\"unter Hinrichs} 
\address{Institut f\"ur Mathematik, Universit\"at Augsburg, 86135 Augsburg, Germany}
\email{Guenter.Hinrichs@math.uni-augsburg.de}

\author[Kolb]{Martin Kolb}
\address{Institut f\"ur Mathematik, Universit\"at Paderborn,
        33098 Paderborn, Germany}
\email{kolb@math.uni-paderborn.de}

\author[Wachtel]{Vitali Wachtel} 
\address{Institut f\"ur Mathematik, Universit\"at Augsburg, 86135 Augsburg, Germany}
\email{vitali.wachtel@mathematik.uni-augsburg.de}

\begin{abstract}
For a class of one-dimensional autoregressive sequences $(X_n)$ we consider the tail behaviour of the 
stopping time $T_0=\min \lbrace n\geq 1: X_n\leq 0 \rbrace$. We discuss existing general analytical
approaches to this and related problems and propose a new one, which is based on a renewal-type
decomposition for the moment generating function of $T_0$ and on the analytical Fredholm alternative.
Using this method, we show that $\mathbb{P}_x(T_0=n)\sim V(x)R_0^n$ for some $0<R_0<1$ and a positive 
$R_0$-harmonic function $V$. Further we prove that our conditions on the tail behaviour of the
innovations are sharp in the sense that fatter tails produce non-exponential decay factors.  
\end{abstract}
\keywords{persistence, quasi-stationarity, autoregressive sequence}
\subjclass{Primary 60J05; Secondary 60G40} 
\maketitle
\section{Introduction and setting}
The analysis of first hitting times of subsets of the state space by a Markov chain
$(X_n)_{n\geq 0}$ is a subject with a long history, but still many recent contributions. For
many applications it is important to gain precise control of the tail behaviour of hitting times. One of
the aims of this work is to demonstrate the usefulness of the combination of analytic and probabilistic
techniques using the example of autoregressive processes.

Let $\xi_k$ be independent, identically
distributed random variables and let $a\in(0,1)$ be a fixed constant. An AR$(1)$-sequence is defined by 
\begin{equation}
\label{AR.def}
X_n=aX_{n-1}+\xi_n,\quad n\geq 1,
\end{equation}
where the starting point $X_0$ of this process may be either deterministic or
distributed according to any probabilistic measure $\nu$. If $X_0=x$ then
we write $\mathbb{P}_x$ for the distribution of the process and if $X_0$ is
distributed according to $\nu$ then we shall write $\mathbb{P}_\nu$ for the
distribution of the process. The sequence $(X_n)_{n\in\mathbb{N}_0}$ defines a 
Markov chain with state space $\mathbb{R}$, whose properties have been analysed
in a great number of papers and we only refer to some of the more recent contributions, such as
\cite{AM17}, \cite{AB11}, \cite{Christensen12}, \cite{Llaralde}, \cite{Nov09}, \cite{NK08}.
Closest to our present contribution and the main stimulus for the present paper is the recent work \cite{AM17},
where persistence probabilities for the process \eqref{AR.def} and its multidimensional versions have
been studied. The focus of \cite{AM17} has been on deriving the existence and basic properties such as
positivity and monotonicity of the persistence exponents 
\begin{displaymath}
\lambda_a:=\lim_{n \rightarrow \infty}\frac{1}{n}\log \mathbb{P}\bigl(X_0>0,X_1>0,\dots,X_n>0\bigr)
\end{displaymath}
for rather general Markov chains (including multidimensional cases) and its calculation
for some very specific chains. For our purposes, let us define 
$$
T_0:=\min\{k \geq 1:\ X_k\leq 0\},
$$
i.e. the first time at which the process becomes negative. Similar to \cite{AM17}, we are going to study the tail behaviour of this stopping time, but, in contrast to \cite{AM17}, we are aiming at precise instead of rough asymptotic results. This means we aim to find the precise, without logarithmic scaling, asymptotics of 
\begin{displaymath}
n \mapsto \mathbb{P}_x(T_0>n)
\end{displaymath}
as $n\rightarrow \infty$. Under natural and in some sense minimal conditions we aim to show that the tail of the stopping time $T_0$ has an exactly exponential decay. We will focus on the one-dimensional situation.

Denoting by $P(x,dy)$ the transition probability of the Markov chain
$(X_n)_{n\ge0}$ we observe that, for every $x>0$,
\begin{equation*}
\begin{split}
\mathbb{P}_x &\bigl(X_1>0,X_2>0,\ldots ,X_n>0\bigr)=\\
&\quad \quad\int_{(0,\infty)}P(x,dy_1)\int_{(0,\infty)}P(y_1,dy_2)\dots\int_{(0,\infty)}P(y_{n-1},(0,\infty))
\end{split}
\end{equation*}
and therefore the probability $\mathbb{P}_x \bigl(X_1>0,X_2>0,\ldots ,X_n>0\bigr)$ can be interpreted
as the $n$-th power of the total mass of the substochastic transition kernel given 
by
\begin{displaymath}
P_+(x,A):=\mathbf{1}_{(0,\infty)}(x)P(x,A \cap (0,\infty)).
\end{displaymath}
From the Gelfand formula for the spectral radius it is tempting to connect $\lambda_a$ to the spectral
radius of some operator induced by $P_+$. From an operator theoretic perspective the problem consists in the
fact that, due to the unboundednes of the state space $(0,\infty)$, the substochastic kernel $P_+$ is usually
not a (quasi-)compact operator on the standard Banach spaces of continuous or $p$-th power integrable functions.
One way out is to find better adapted Banach spaces, which is often possible. A different strategy which we are going to present as well consists in analysing the behaviour of the Laplace transform
\begin{displaymath}
\lambda \mapsto \mathbb{E}_x\bigl[e^{\lambda T_0}\bigr]
\end{displaymath}
near the critical line and in this respect is classical. In fact, similar arguments appear in the investigation of other large time problems in the theory of stochastic processes such as \cite{L95} and \cite{SW75}. In order to deduce the required properties we will show that $\mathbb{E}_x\bigl[e^{\lambda T_0}\bigr]$ satisfies a suitable renewal equation and study some operator theoretic properties of the corresponding transition operator.
This leads to a meromorphic representation for the function $\mathbb{E}_x\bigl[e^{z T_0}\bigr]$. The final step consists in showing 
that all singularities near the critical line $\{z:\, \Re z=\lambda_a\}$ are simple poles and in the subsequent application of the Wiener-Ikehara theorem. We want to emphasize that related results have been recently derived in \cite{CV17} using completely different methods and we will comment on connections below.

All our results will be valid for any stopping time
$$
T_r:=\min\{k \geq 1:\ X_k\leq r\},\quad r\in\mathbb{R}.
$$
This is immediate from the observation that the sequence $X_n^{(r)}:=X_n-r$, $n\geq0$
satisfies \eqref{AR.def} with innovations $\xi_n^{(r)}:=\xi_n-(1-a)r$, $n\ge1$.

We want to stress at this point that even though we exclusively deal with the one-dimensional situation in this work, the approaches we present are more generally applicable, also in multidimensional situations and to processes of different type. As it seems that our analytic approaches are not well known in the probabilistic literature, they are at least rarely used in standard literature, we hope that the present contribution also serves as template on powerful analytic methods for persistence and quasistationary problems.

Several authors, see \cite{Christensen12,Llaralde,Nov09}, have obtained exact expressions for the Laplace transform in the case when the distribution of the innovations is related to the exponential distribution. Unfortunately, the expressions are very complicated, and it is not clear how to invert them or how to use them for deriving the tail asymptotics for $T_0$. Moreover, it is not clear whether one can obtain an explicit expression for $\lambda_a$. If, for example, the innovations $\xi_n$ have density $e^{-\mu|x|}/(2\mu)$ then, as it has been shown in~\cite{Llaralde},
$$
\mathbb{E}_0 s^{T_0}=\frac{s(as,a^2)_\infty}{(as,a^2)_\infty+(s,a^2)_\infty},
$$
where
$(u,q)_\infty=\prod_{k=0}^\infty(1-uq^k)$. Therefore, $e^{\lambda_a}$ is the minimal positive solution to the equation $(as,a^2)_\infty+(s,a^2)_\infty=0$. It is obvious that
this solution lies between $1$ and $a^{-1}$, but an explicit expression is not accessible. In order to analyse the tail behaviour of $T_0$ we have to take into account
all singularities of this function on the circle of radius $e^{\lambda_a}$, but this information is also rather hard to extract from the exact expression. Expressions in
\cite{Christensen12} and in \cite{Nov09} are even more complicated. Summarising, all known explicit expressions for the Laplace transform of $T_0$ do not seem to provide any useful information on the asymptotic properties of $T_0$. 

The structure of the paper is the following. In Section \ref{s:rough} we prove under rather general assumptions the existence and positivity of the decay rate 
\begin{displaymath}
\lambda_a:=-\lim_{n\rightarrow \infty}\frac{1}{n}\log \mathbb{P}_x\bigl(T_0>n\bigr).
\end{displaymath}
In Section \ref{s:boundedsupport} we consider the situation where the distribution of the innovations has bounded support. In this case, one can use results in \cite{CV16} in order to show that the tails have a precise exponential decay. For unbounded innovations this approach is no longer possible.

In order to be able to cover also unbounded innovations we introduce in Section \ref{s:FuAn} a different functional analytic approach relying on the concept of a quasicompact operator and the associated spectral theory.

Section \ref{s:renewal} presents another way to deal with unbounded innovations, which is based on a renewal argument in combination with some basic operator theoretic arguments applied to the renewal operator. 

Section \ref{s:reg} deals with the situation of regularly varying case, where the foregoing theory is not applicable.

In Section \ref{s:dis}, the essential ingredients of our techniques and their applicability to different problems are discussed.
\section{Rough asymptotics for persistence probabilities}\label{s:rough}
In this section we  establish a general result concerning the exponential
decay of the tails of the hitting time $T_0$. Results of this type will be an essential ingredient for the further investigation of more detailed properties of the first hitting time $T_0$. More precisely, we shall derive
asymptotics for the logarithmically scaled tail of $T_0$. In contrast to \cite{AM17} we study the one-dimensional situation, but there we are able to work under much weaker hypotheses. 

Let us first recall some basic properties of the Markov process $(X_n)_{n\in\mathbb{N}_0}$. If $\mathbb{E}\log(1+|\xi_1|)<\infty$ then $X_n$ converges weakly to the distribution of the series
$$
X_\infty:=\sum_{k=1}^\infty a^{k-1}\xi_k.
$$
This is immediate from the following expression for $X_n$:
\begin{equation}
\label{eq.1}
X_n=a^nX_0+a^{n-1}\xi_1+a^{n-2}\xi_2+\ldots+\xi_n.
\end{equation}
Let $\pi(dx)$ denote the distribution of $X_\infty$. This distribution is stationary
for the Markov chain $X_n$, that is,
\begin{equation}
\label{eq.2}
\mathbb{P}_\pi(X_n\in dx)=\pi(dx),\quad n\geq1.
\end{equation}

\begin{theorem}
\label{thm.log}
Assume that the innovations $(\xi_n)_{n\in\mathbb{N}}$ satisfy
\begin{displaymath}
\mathbb{E}\log(1+|\xi_1|)<\infty,\, \mathbb{E}(\xi_1^+)^\delta<\infty \text{ for some }\delta>0
\quad\text{and}\quad
\mathbb{P}(\xi_1>0)\mathbb{P}(\xi_1<0)>0\,.
\end{displaymath}
Then, for every $a\in(0,1)$, 
\begin{equation}
\label{thm.log.1}
-\lim_{n\rightarrow\infty}\frac{1}{n}\log\mathbb{P}_x\bigl( T_0>n\bigr)
=\lambda_a\in (0,\infty),\quad x\in(0,\infty).
\end{equation}
Furthermore, if the distribution of the innovations satisfies
\begin{equation}
\label{thm.log.2}
\lim_{x\to\infty}\frac{\log\mathbb{P}(\xi_1>x)}{\log x}=0,
\end{equation}
then 
\begin{equation}
\label{thm.log.3}
-\lim_{n\rightarrow\infty}\frac{1}{n}\log\mathbb{P}_x\bigl( T_0>n\bigr)
=0,\quad x\in(0,\infty).
\end{equation}
\end{theorem}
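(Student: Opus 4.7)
The statement contains four sub-claims: existence, finiteness and strict positivity of $\lambda_a$ under the first set of hypotheses, and the equality $\lambda_a=0$ under \eqref{thm.log.2}. Finiteness and the heavy-tail claim come from direct lower bounds on $\mathbb{P}_x(T_0>n)$; strict positivity from an exponential upper bound; existence of the limit is the subtlest point.

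For finiteness, pick $\delta_0>0$ with $q:=\mathbb{P}(\xi_1>\delta_0)>0$: on $\{\xi_j>\delta_0 \text{ for } j\le n\}$ the recursion yields $X_k\ge\delta_0>0$, so $\mathbb{P}_x(T_0>n)\ge q^n$ and $\lambda_a\le-\log q<\infty$. For the heavy-tail claim, write $X_k=a^kx+a^{k-1}\xi_1+\sum_{j=2}^k a^{k-j}\xi_j$ and set $S_n:=\sum_{j=2}^n a^{n-j+1}|\xi_j|$. A short computation shows that on $\{\xi_1\ge Ka^{-n}\}\cap\{S_n\le K/2\}$ every $X_k$ with $k\le n$ is strictly positive. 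By iid symmetry $S_n\stackrel{d}{=}\sum_{i=1}^{n-1}a^i|\xi_i|$, which converges a.s.\ under $\mathbb{E}\log(1+|\xi_1|)<\infty$; hence $(S_n)$ is tight and for $K$ large $p_0:=\inf_n\mathbb{P}(S_n\le K/2)>0$. Independence of $\xi_1$ and $S_n$ gives
\[
\mathbb{P}_x(T_0>n)\ge p_0\,\mathbb{P}(\xi_1\ge Ka^{-n}),
\]
and \eqref{thm.log.2} forces $n^{-1}\log\mathbb{P}(\xi_1\ge Ka^{-n})\to 0$; combined with the trivial upper bound this proves \eqref{thm.log.3}.

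For strict positivity, choose $\eta,p>0$ with $\mathbb{P}(\xi_1\le-\eta)=p$, a threshold $M$ and a block length $L$ such that $a^LM-\eta(1-a^L)/(1-a)<0$. Then $X_k\le M$ combined with $\xi_{k+1},\ldots,\xi_{k+L}\le-\eta$ forces $X_{k+L}<0$ and hence $T_0\le k+L$. Setting $s:=\delta\wedge 1$ and $V(x):=(x^+)^s$, the subadditivity $(a+b)^s\le a^s+b^s$ together with $\mathbb{E}(\xi_1^+)^\delta<\infty$ yields the drift $PV\le a^sV+C$. For the killed operator $\bar P f(y):=(1-p^L\mathbf 1_{y\le M})P^Lf(y)$ one checks, by treating $\{y\le M\}$ and $\{y>M\}$ separately, that for $M$ large enough there exist $\beta>0$ and $\rho\in(0,1)$ with $\bar P(V+\beta)\le\rho(V+\beta)$. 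Iterating and using $\mathbb{P}_x(T_0>NL)\le\bar P^N\mathbf 1(x)\le\rho^N(V(x)+\beta)/\beta$ yields the exponential upper bound and $\lambda_a\ge L^{-1}\log(1/\rho)>0$.

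For existence of the limit, the identity $\mathbb{P}_x(T_0>n+m)=\mathbb{E}_x[\mathbf 1_{T_0>n}\mathbb{P}_{X_n}(T_0>m)]$ is not sub-multiplicative, because large $X_n$ makes $\mathbb{P}_{X_n}(T_0>m)$ close to $1$; this is the main obstacle. Affine dependence of $X_n$ on $x$ however makes $u_n(x):=\mathbb{P}_x(T_0>n)$ non-decreasing in $x$, so $u_m(X_n)\ge u_m(0)$ on $\{T_0>n\}$ when $X_0=0$, giving the super-multiplicative bound $u_{n+m}(0)\ge u_n(0)u_m(0)$. Fekete's lemma then produces $\lambda:=\lim_n n^{-1}(-\log u_n(0))$, finite and positive by the two previous steps, and monotonicity $u_n(x)\ge u_n(0)$ transfers the upper asymptotic to every $x>0$: $\limsup_n n^{-1}(-\log u_n(x))\le\lambda$. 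The matching inequality $\liminf_n n^{-1}(-\log u_n(x))\ge\lambda$ I would obtain by a stopping-time decomposition of $u_n(x)$ at $\tau_\epsilon:=\inf\{k\ge 0:X_k\le\epsilon\}$, using the Lyapunov drift of step~(iii) to supply the exponential tails $\mathbb{P}_x(\tau_\epsilon>k)\le C(x)e^{-ck}$ needed to transfer the exponential rate from $x=0$ to an arbitrary $x>0$.
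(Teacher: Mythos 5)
Your bounds for finiteness of the rate, your self-contained one-big-jump proof of \eqref{thm.log.3} (where the paper simply quotes (2.4) of \cite{AM17}), and the observation that monotonicity in the starting point gives supermultiplicativity of $u_n(0)=\mathbb{P}_0(T_0>n)$ and hence, via Fekete, existence of the limit at $x=0$, are all correct. The proposal breaks down, however, exactly where the real difficulty of the theorem sits: the matching inequality $\liminf_n n^{-1}(-\log u_n(x))\ge\lambda$ for a fixed $x>0$. Your decomposition gives $u_n(x)\le\sum_{m\le n}\mathbb{P}_x(\tau_\epsilon=m)\,u_{n-m}(\epsilon)+\mathbb{P}_x(\tau_\epsilon>n)$, and you propose to control the first factor by a tail $C(x)e^{-cm}$ coming from the Lyapunov drift. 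But the drift only produces some rate $c=L^{-1}\log(1/\rho)$, which is by construction a lower bound for $\lambda$ and in general strictly smaller than $\lambda$; the terms with $m$ of order $n$ then contribute $e^{-cn}$, so the convolution only yields $\liminf\ge\min(c,\lambda)=c$, which you already knew. What is needed is that the probability of surviving while staying \emph{above the level $\epsilon$} decays at rate at least $\lambda$ -- essentially the statement being proved, for the shifted chain -- and no ingredient in your argument supplies this. (You also implicitly need $u_k(\epsilon)\le e^{-\lambda k+o(k)}$; this is fixable via $u_{k+1}(0)\ge\mathbb{P}(\xi_1\ge\epsilon)u_k(\epsilon)$, and the same one-step jump handles any fixed $x$ with $\mathbb{P}(\xi_1\ge x)>0$, but when the innovations are bounded above and $x$ exceeds $R/(1-a)$ no such jump exists -- precisely the case the paper treats separately with a geometric-descent argument and the bound $\lambda_a\le\log(1/p)$, $p=\mathbb{P}(\xi_1=R)$. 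The paper's identification of a common rate for all admissible $x$ goes through the comparison with the stationary initial distribution $\pi$ and equality of the rates across the support of $\pi$; some substitute for this step is missing in your argument.)

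There is also a flaw in the positivity step: the domination $\mathbb{P}_x(T_0>NL)\le\bar P^N\mathbf 1(x)$ with $\bar Pf(y)=(1-p^L\mathbf 1_{y\le M})P^Lf(y)$ is not valid in general. Unfolding it, for $x\le M$ you are asserting $\mathbb{E}_x[g(X_L);T_0\le L]\ge p^L\,\mathbb{E}_x[g(X_L)]$ for the (increasing) functions $g=\bar P^{k}\mathbf 1$, and FKG gives the opposite correlation; concretely, for innovations taking the value $-\eta$ with probability $p$ and a huge value $H$ otherwise, and $L$ the minimal number of down-steps killing the chain from $M$, one has $\mathbb{P}_M(T_0>2L)=1-p^L$ while $\bar P^2\mathbf 1(M)\le(1-p^L)(1-p^{2L})$. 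This is repairable: apply your two-regime drift computation directly to the killed block operator $Q_Lf(x):=\mathbb{E}_x[f(X_L);T_0>L]$, using $\mathbb{P}_x(T_0>L)\le1-p^L$ for $x\le M$ only on the constant part $\beta$ and $Q_LV\le P^LV\le a^{sL}V+C_L$ on the $V$ part; with $L=L(M)$ of order $\log M$, $\rho:=1-p^L/2$ and $\beta$ of order $(a^{sL}M^s+C_L)/p^L$ one gets $Q_L(V+\beta)\le\rho(V+\beta)$ and the exponential upper bound, hence $\lambda>0$ (the paper instead just cites Theorem 1 of \cite{NK08} for positivity). So positivity survives after a local fix, but the identification of the limit for every starting point, described above, is a genuine gap.
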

If $\mathbb{P}(\xi_1<0)=0$ then $T_0=\infty$ almost surely. Furthermore, the
assumption $\mathbb{P}(\xi_1>0)>0$ is imposed to avoid a trivial situation
when the chain $X_n$ is monotone decreasing before hitting negative numbers.
As it has been mentioned at the beginning of this section, the existence of the
logarithmic moment yields the ergodicity of $X_n$. Finally, the finiteness of
$\mathbb{E}(\xi_1^+)^\delta$ is needed for the positivity of $\lambda_a$ only.

Existence and finiteness of the limit $\lambda_a$ follow also from Theorem 2.3 in
\cite{AM17} in a multidimensional situation at least under the condition that an
exponential moment exists. We use a weaker moment assumption
$\mathbb{E}(\xi_1^+)^\delta<\infty$. Since the finiteness of
$\mathbb{E}(\xi_1^+)^\delta$ implies that 
$$
\limsup_{x\to\infty}\frac{\log\mathbb{P}(\xi_1>x)}{\log x}\le-\delta,
$$
we conclude that the moment assumption $\mathbb{E}(\xi_1^+)^\delta<\infty$ is
optimal for the positivity of $\lambda_a$.

Relation \eqref{thm.log.3} is a simple consequence of relation (2.4) in \cite{AM17}.
It should be noted that the proof of (2.4) does not use the assumption that the
innovations are normally distributed and, consequently, we may use (2.4) in the proof
of \eqref{thm.log.3}.

\begin{proof}[Proof of Theorem~\ref{thm.log}]
It follows from \eqref{eq.1} that, for $0\le x\le y$,
\begin{align}
\label{eq.3}
\nonumber
\mathbb{P}_x(T_0>n)
&=\mathbb{P}\left(\min_{k\leq n}\left(xa^k+\sum_{j=1}^ka^{k-j}\xi_j\right)>0\right)\\
&\le \mathbb{P}\left(\min_{k\leq n}\left(ya^k+\sum_{j=1}^ka^{k-j}\xi_j\right)>0\right)
=\mathbb{P}_y(T_0>n).
\end{align}
Besides $T_0$ we consider a slightly modified stopping time
$$
\widetilde{T}_0:=\min\{k\ge0:X_k\le0\}.
$$

The monotonicity property \eqref{eq.3} implies that, for every $x>0$,
\begin{align*}
\mathbb{P}_\pi(\widetilde{T}_0>n)=\int_0^\infty\pi(dy)\mathbb{P}_y(T_0>n)
&\ge\int_x^\infty\pi(dy)\mathbb{P}_y(T_0>n)\\
&\ge \pi[x,\infty)\mathbb{P}_x(T_0>n).
\end{align*}
In other words,
\begin{equation}
\label{eq.4}
\mathbb{P}_x(T_0>n)\le \frac{1}{\pi[x,\infty)}\mathbb{P}_\pi(T_0>n). 
\end{equation}

Next, multiplying \eqref{eq.1} by $a^{-n}$, we have
$$
a^{-n}X_n=X_0+\sum_{j=1}^n a^{-j}\xi_j=:X_0+S_n
$$
and
$$
\left\{T_0>n\right\}=\left\{X_0+\min_{k\le n}S_k>0\right\}.
$$
Since $X_0+S_k$ are sums of independent random variables, we may apply FKG inequality
for product spaces, which gives the estimate
\begin{align}
\label{eq.5}
\nonumber
\mathbb{P}(X_n>y|T_0>n)
&=\mathbb{P}\left(X_0+S_n>a^{-n}y\Big| X_0+\min_{k\le n}S_k>0\right)\\
&\ge \mathbb{P}\left(X_0+S_n>a^{-n}y\right)=\mathbb{P}(X_n>y),
\end{align}
which holds for every distribution of $X_0$. Applying this inequality to the 
stationary process, we obtain
\begin{align*}
\mathbb{P}_\pi(\widetilde{T}_0>n+m)&=\int_0^\infty\mathbb{P}_\pi(X_n\in dy,\widetilde{T}_0>n)\mathbb{P}_y(T_0>m)\\
&=\mathbb{P}_\pi(\widetilde{T}_0>n)\int_0^\infty\mathbb{P}_\pi(X_n\in dy|\widetilde{T}_0>n)\mathbb{P}_y(T_0>m)\\
&\ge \mathbb{P}_\pi(\widetilde{T}_0>n)\int_0^\infty\mathbb{P}_\pi(X_n\in dy)\mathbb{P}_y(T_0>m)\\
&=\mathbb{P}_\pi(\widetilde{T}_0>n)\mathbb{P}_\pi(\widetilde{T}_0>m).
\end{align*}
Then, by the Fekete lemma,
\begin{equation}
\label{eq.6}
-\lim_{n\to\infty} \frac{1}{n}\log\mathbb{P}_\pi(\widetilde{T}_0>n)=:\lambda_a(\pi)\in[0,\infty).
\end{equation}
By the same argument, for every fixed $x$ we have
\begin{align*}
\mathbb{P}_x(T_0>n+m)
&\ge\mathbb{P}_x(T_0>n)\int_0^\infty\mathbb{P}_x(X_n\in dy)\mathbb{P}_y(T_0>m)\\
&\ge\mathbb{P}_x(T_0>n)\int_x^\infty\mathbb{P}_x(X_n\in dy)\mathbb{P}_y(T_0>m).
\end{align*}
Using now the monotonicity property \eqref{eq.3}, we get
\begin{equation}
\label{eq.6a}
\mathbb{P}_x(T_0>n+m)\ge \mathbb{P}_x(T_0>n)\mathbb{P}_x(X_n\ge x)\mathbb{P}_x(T_0>m).
\end{equation}
If $x$ is such that $\pi[x,\infty)>0$ then $\mathbb{P}_x(X_n\ge x)\to\pi[x,\infty)$
and we may apply again the Fekete lemma:
\begin{equation}
\label{eq.7}
-\lim_{n\to\infty} \frac{1}{n}\log\mathbb{P}_x(T_0>n)=:\lambda_a(x)\in[0,\infty).
\end{equation}
By \eqref{eq.3}, $\lambda_a(x)$ is decreasing in $x$. Moreover, from \eqref{eq.4}, 
\eqref{eq.6} and \eqref{eq.7}, we infer that
$$
\lambda_a(x)\ge\lambda_a(\pi)\quad\text{for all }x\text{ such that }\pi[x,\infty)>0.
$$
If $\pi[x,\infty)>0$, then there exist $\varepsilon>0$ and $m_0$ such that
$\mathbb{P}_x(X_{m_0}>x+\varepsilon)>0$. Then, using the Markov property at time
$m_0$ and the monotonicity of $P_y(T_0>n)$, we conclude that
$\lambda_a(x)=\lambda_a(y)$ for all $y\in[x,x+\varepsilon]$. As a result, we have
$$
\lambda_a(x)=\lambda_a(\pi)=:\lambda_a \quad
\text{for all }x\text{ such that }\pi[x,\infty)>0.
$$
According to Theorem 1 in \cite{NK08}, the assumption
$\mathbf{E}(\xi_1^+)^\delta<\infty$ for some $\delta>0$ implies that $\lambda_a>0$.
Thus, we have the same exponential rate for all starting points in the support of
the measure $\pi$.

If $\mathbb{P}(\xi_1>x)>0$ for all $x>0$, then we have the logarithmic asymptotic behaviour
for all positive starting points.

Consider now the case when innovations are bounded from above. Let $R$ denote the
essential supremum of $\xi_1$, that is, $\mathbb{P}(\xi_1\le R)=1$ and
$\mathbb{P}(\xi_1>R-\varepsilon)>0$ for every $\varepsilon>0$. It is easy to see
that $\pi([x,\infty))>0$ for every $x\in(0,R/(1-a))$ and $\pi([x,\infty))=0$ for each
$x>R/(1-a)$. If the starting point $x$ is greater than $R_*:=R/(1-a)$ then the 
sequence $X_n$ decreases before it hits $(0,R_*)$ and there exists
$m=m(x,\varepsilon)$ such that $X_m\le R_*+\varepsilon$. Further, for every starting
point in $[R_*,R_*+\varepsilon)$ the hitting time of $(-\infty,R_*)$ is stochastically
bounded by a geometric random variable with parameter $1-p_\varepsilon$, where
$p_\varepsilon:=\mathbb{P}(\xi_1>R-\varepsilon)>0$. Since we know the exponent for
all starting points in $(0,R_*)$, we conclude that
$$
-\lim_{n\to\infty}\frac{1}{n}\log\mathbb{P}_{x}(T_0>n)
=\min\{\lambda_a,\log(1/p_\varepsilon)\},\quad x\ge R_*.
$$
Letting now $\varepsilon\to0$, we obtain
$$
-\lim_{n\to\infty}\frac{1}{n}\log\mathbb{P}_{x}(T_0>n)
=\min\{\lambda_a,\log(1/p)\},\quad x\ge R_*,
$$
where $p=\mathbb{P}(\xi_1=R)$. Now, noting that
$$
\mathbb{P}_x(T_0>n)\ge \mathbb{P}(\xi_1=\xi_2=\ldots=\xi_n=R)=p^n,\quad x>0,
$$
we infer that $\lambda_a\le \log(1/p)$. Consequently,
$$
-\lim_{n\to\infty}\frac{1}{n}\log\mathbb{P}_{x}(T_0>n)
=\lambda_a,\quad x\ge R_*.
$$
This completes the proof of \eqref{thm.log.1}.

It is clear that the distribution of $\xi_1$ is a uniform minorant for distributions
of $X_1$, that is,
$$
\mathbb{P}_x(X_1>y)\ge \mathbb{P}(\xi_1>y),\quad\text{for all }x,y>0.
$$
Therefore,
$$
\mathbb{P}_x(T_0>n)\ge \mathbb{P}_\mu(T_0>n-1),
$$
where $\mu$ denotes the distribution of $\xi_1$. \eqref{thm.log.3} follows now from
(2.4) in \cite{AM17}.
\end{proof}
We conclude this section with the following open problem: For which starting
distributions $\mu(dx)=\mathbb{P}(X_0\in dx)$ the statement of Theorem~\ref{thm.log}
remains valid? A general observation that the persistence exponent may depend
on the initial distribution of an AR(1)-sequence has been made in Proposition~2.2
in \cite{AM17}. The reader can find conditions on the initial distribution in  \cite{AM17}, in \cite{CV17} and also in Section 4.2 below ensuring the validity of Theorem 1. Until now a complete characterization does not seem to exist.  At this point we want to emphasize that this phenomenon is well known
in the theory of quasistationary distributions and is strongly related to the fact that
quasistationary distributions are not unique in general (see e.g. \cite{CMSM13}).
Conditions ensuring uniqueness of quasistationary distributions are given in \cite{CV16}.
\section{Innovations with bounded to the right support:\\ approach via quasistationarity}\label{s:boundedsupport}
Exponential decay of $\mathbb{P}(T_0>n)$ is often related to a quasi-stationary
behaviour of $(X_n)$ conditioned on the event $\{T_0>n\}$. Recall that the
quasistationarity implies that
$$
\mathbb{P}(T_0>n+1|T_0>n)\to e^{-\lambda_a}.
$$
So, the logarithmic asymptotics in Theorem~\ref{thm.log} should also follow
from the quasistionarity of $(X_n)$. Furthermore, it is quite natural to expect that
the  knowledge on the rate of convergence towards a quasi-stationary distribution
will imply preciser statements on the tail behaviour of $T_0$. This has been
recently confirmed in \cite{CV16}, where necessary and sufficient conditions for an exponentail speed of convergence in
the total variation norm to a quasistationary distribution have been provided. There, it has also been shown that such fast convergence yields
a purely exponential decay of $\mathbb{P}(T_0>n)$.

Let us formulate conditions from \cite{CV16} in terms of the AR(1)-sequence $(X_n)_{n\in\mathbb N_0}$.\\
{\bf First condition}: there exist a probability measure $\nu$ and constants
$n_0\ge1$, $c_1>0$ such that
\begin{equation}
\label{A1-cond}
\mathbb{P}_x(X_{n_0}\in\cdot|T_0>n_0)\ge c_1\nu(\cdot)\quad\text{for all }x>0.
\end{equation}
{\bf Second condition}: there exists a constant $c_2$ such that
\begin{equation}
\label{A2-cond}
\mathbb{P}_\nu(T_0>n)\ge c_2\mathbb{P}_x(T_0>n)\quad\text{for all }n\ge1\text{ and }x>0.
\end{equation}
It is rather obvious that \eqref{A2-cond} can not be valid for all $x>0$. This
observation implies that the results in \cite{CV16} are not
applicable to AR(1)-sequences with unbounded to the right innovations. So, we shall
assume that innovations $\xi_k$ are bounded. Let $R$ denote, as in the previous
section, the essential supremum of $\xi_1$. Then the invariant measure
lives on the set $(-\infty,R_*]$, where $R_*=R/(1-a)$. If the starting point lies
in $(0,R_*]$ then the chain $X_n$ does not exceed $R_*$ at all times. Consequently,
we have to find restrictions on the distribution of innovations which will ensure the 
validity of \eqref{A1-cond} and \eqref{A2-cond} for $x\le R_*$ only. 

We begin by showing that \eqref{A1-cond} holds for a quite wide class of innovations.
\begin{lemma}
\label{lem:A2-cond}
Assume that the distribution of innovations satisfy
\begin{equation}
\label{lem.A2.assump}
\mathbb{P}(\xi_1\le -aR_*)+\mathbb{P}(\xi_1=R)<1.
\end{equation}
Then there exist $\delta>0$  and a constant $c$ such that, for every $x\in[R_*-\delta,R_*]$,
\begin{equation}
\label{lem.A2.0}
\mathbb{P}_{R_*}(T_0>n)\le c\mathbb{P}_x(T_0>n),\quad n\ge1.
\end{equation}
In particular, the condition \eqref{A2-cond} is valid for any $\nu$ with 
$\nu[R_*-\delta,R_*]>0$.
\end{lemma}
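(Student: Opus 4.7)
The plan is to prove \eqref{lem.A2.0} via a one-step Markov decomposition followed by an iteration. By the monotonicity \eqref{eq.3}, $y\mapsto\mathbb{P}_y(T_0>n)$ is non-decreasing, so it suffices to establish \eqref{lem.A2.0} at $x=R_*-\delta$; the general case for $x\in[R_*-\delta,R_*]$ then follows from $\mathbb{P}_{R_*-\delta}(T_0>n)\le\mathbb{P}_x(T_0>n)$.

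First I would use the hypothesis $\mathbb{P}(\xi_1\le-aR_*)+\mathbb{P}(\xi_1=R)<1$, which forces $\mathbb{P}(-aR_*<\xi_1<R)>0$, to fix $\delta,\eta>0$ with $\delta\le\eta$ so that $p':=\mathbb{P}(-aR_*<\xi_1\le R-\eta)>0$. Set $q:=\mathbb{P}(\xi_1>R-\eta)$. Conditioning on $X_1$, splitting according to whether $X_1\le R_*-\eta$ or not, and applying monotonicity of $\mathbb{P}_y(T_0>n-1)$ on each piece (with $R_*-\eta\le R_*-\delta$), I would arrive at the recursion
\[
\mathbb{P}_{R_*}(T_0>n)\le p'\,\mathbb{P}_{R_*-\delta}(T_0>n-1)+q\,\mathbb{P}_{R_*}(T_0>n-1).
\]
Iterating this and invoking the elementary shift bound $\mathbb{P}_y(T_0>m+1)\ge p_+\mathbb{P}_y(T_0>m)$, where $p_+:=\mathbb{P}(\xi_1>0)$ (on $\{T_0>m\}$ one has $X_m>0$, so $\xi_{m+1}>0$ forces $X_{m+1}>0$), the resulting geometric sum gives
\[
\mathbb{P}_{R_*}(T_0>n)\le\frac{p'/p_+}{1-q/p_+}\,\mathbb{P}_{R_*-\delta}(T_0>n)+q^n,
\]
provided $\eta$ has been chosen so that $q<p_+$. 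The residual $q^n$ is absorbed into the leading term via the crude lower bound $\mathbb{P}_{R_*-\delta}(T_0>n)\ge p_+^n$, yielding \eqref{lem.A2.0}. The final assertion of the lemma is then routine: integrating \eqref{lem.A2.0} against $\nu$ and applying \eqref{eq.3} gives $\mathbb{P}_\nu(T_0>n)\ge\nu[R_*-\delta,R_*]\,c^{-1}\,\mathbb{P}_x(T_0>n)$ for every $x\in(0,R_*]$.

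The main obstacle I anticipate is securing the strict inequality $q<p_+$. Letting $\eta\downarrow 0$ only gives $q\to\mathbb{P}(\xi_1=R)$, so the iteration closes automatically when $\mathbb{P}(\xi_1=R)<\mathbb{P}(\xi_1>0)$, i.e., whenever $\xi_1$ has some mass in $(0,R)$. In the degenerate regime where $\xi_1$ concentrates on $\{R\}\cup(-\infty,0]$, the iteration breaks down as a geometric series and the argument must be refined. A natural refinement is to condition on the first time $\tau$ at which $\xi_\tau\ne R$ (a geometric random time with mean $1/\mathbb{P}(\xi_1<R)$): the conditional distribution of $X_\tau$ on $\{T_0>\tau\}$ is supported in $(0,aR_*]$, strictly below $R_*$, and the synchronous coupling $X_k(R_*)-X_k(R_*-\delta)=a^k\delta$ then lets the comparison of survival probabilities be pushed past time $\tau$ without ever encountering the obstructing atom at the upper endpoint.
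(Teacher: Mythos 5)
Your recursion and its iteration are sound, and where they apply they give a more elementary argument than the paper's: whenever $\mathbb{P}(0<\xi_1<R)>0$ you can indeed choose $\eta$ so small that $q=\mathbb{P}(\xi_1>R-\eta)<p_+=\mathbb{P}(\xi_1>0)$, and your crude shift bound $\mathbb{P}_y(T_0>m+1)\ge p_+\mathbb{P}_y(T_0>m)$ closes the geometric sum without ever invoking Theorem~\ref{thm.log}. The problem is the degenerate regime you yourself flag, which is \emph{not} excluded by \eqref{lem.A2.assump}: if the positive part of $\xi_1$ is concentrated in the atom at $R$, then mass in $(-aR_*,0]$ already guarantees \eqref{lem.A2.assump}, while $q=p_+=\mathbb{P}(\xi_1=R)$ for every $\eta\in(0,R)$, so your series is exactly borderline. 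Your proposed repair — condition on the first time $\tau$ with $\xi_\tau\ne R$ and use the coupling $X_k(R_*)-X_k(R_*-\delta)=a^k\delta$ — does not close as stated. After time $\tau$ the two coupled chains sit at generic points of $(0,aR_*]$ at distance $a^\tau\delta$, and what you then need is an upper bound for the survival probability of the \emph{higher} point by a constant times that of the lower one, i.e. a statement of exactly the same kind as \eqref{lem.A2.0}; monotonicity runs the wrong way, so this is circular. If instead you trade the event $\{\tau=k\}$ against a time shift of the lower chain, the geometric tail $\mathbb{P}(\tau>k)=\mathbb{P}(\xi_1=R)^k$ is matched exactly by your only available lower bound $\mathbb{P}_{R_*-\delta}(T_0>k)\ge p_+^k=\mathbb{P}(\xi_1=R)^k$, and the resulting estimate produces a factor growing in $n$ rather than a constant.

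What is missing is precisely the quantitative input the paper extracts from \eqref{lem.A2.assump}: by letting the chain stay near $R_*$ through repeated innovations equal to $R$ and then taking any innovation larger than $-aR_*+\gamma$, one gets $\mathbb{P}_x(T_0>m)\ge\bigl(\mathbb{P}(\xi_1=R)+\varepsilon\bigr)^m$ uniformly in $x\in(0,R_*]$, hence the strict inequality $\mathbb{P}(\xi_1=R)<e^{-\lambda_a}$, which allows a choice of $\delta$ with $\varepsilon(\delta)=\mathbb{P}(\xi_1>R-\delta)<e^{-\lambda_a}$. With this gap in hand, the paper replaces your bound $p_+^{-k}$ by the sharp time-shift inequality \eqref{eq.6a}, namely $\mathbb{P}_{R_*-\delta}(T_0>n-k)\le\mathbb{P}_{R_*-\delta}(T_0>n)\bigl(\mathbb{P}_{R_*-\delta}(T_0>k)\inf_j\mathbb{P}_{R_*-\delta}(X_j>R_*-\delta)\bigr)^{-1}$, and the series $\sum_k\varepsilon^k(\delta)/\mathbb{P}_{R_*-\delta}(T_0>k)$ converges by Theorem~\ref{thm.log}. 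It is this strict separation between the weight of the atom at $R$ and $e^{-\lambda_a}$ that your proposal lacks in the degenerate case; without it, or an equivalent substitute, the second half of your argument remains a sketch rather than a proof.
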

\begin{remark}
If the assumption \eqref{lem.A2.assump} does not hold, i.e.,
$$
\mathbb{P}(\xi_1\le -aR_*)+\mathbb{P}(\xi_1=R)=1,
$$
then one can easily see that, for all $x\in(0,R_*]$ and $n\ge 1$,
$$
\mathbb{P}_x(T_0>n)=\left(\mathbb{P}(\xi_1=R)\right)^n.
$$
Therefore, \eqref{lem.A2.assump} does not restrict the generality.
\hfill
$\diamond$
\end{remark}
\begin{proof}[Proof of Lemma~\ref{lem:A2-cond}.]
Clearly, \eqref{lem.A2.assump} yields the existence of $\gamma>0$ such that
$$
\mathbb{P}(\xi_1>-aR_*+\gamma)>\mathbb{P}(\xi_1=R).
$$
Further, there exists $m=m(\gamma)$ such that, uniformly in starting points 
$x\in(0,R_*]$,
$$
\mathbb{P}_x(X_{m-1}>R_*-\gamma/a,T_0>m-1)\ge \left(\mathbb{P}(\xi_1=R)\right)^{m-1}.
$$
Consequently,
\begin{align*}
\mathbb{P}_x(T_0>m)
&\ge\mathbb{P}_x(X_{m-1}>R_*-\gamma/a,T_0>m-1)\mathbb{P}(\xi_m>-aR_*+\gamma)\\
&>\left(\mathbb{P}(\xi_1=R)+\varepsilon\right)^m
\end{align*}
for all $x\in(0,R_*]$ and some $\varepsilon>0$. Since
\begin{align*}
\mathbb P_x(T_0>nm) \ge \left( \min_{x>0}\mathbb P_x(T_0>m) \right)^{\left\lfloor\frac nm\right\rfloor} = \left( \mathbb P(\xi_1=R)+\varepsilon \right)^{\left\lfloor\frac nm\right\rfloor m}\,,
\end{align*}
we infer that
$$
\mathbb{P}(\xi_1=R)<e^{-\lambda_a}.
$$
Therefore, there exists $\delta>0$ such that
$$
\mathbb{P}(\xi_1>R-\delta)<e^{-\lambda_a},
$$
which is equivalent to
\begin{equation}
\label{lem.A2.1}
\varepsilon(\delta):=\mathbb{P}_{R_*}(X_1>R_*-\delta)<e^{-\lambda_a}.
\end{equation}

Taking into account the monotonicity property \eqref{eq.3}, we get
\begin{align*}
\mathbb{P}_{R_*}(T_0>n)
&\le \mathbb{P}_{R_*}(X_1\le R_*-\delta)\mathbb{P}_{R_*-\delta}(T_0>n-1)\\
&\hspace{1cm}+\mathbb{P}_{R_*}(X_1> R_*-\delta)\mathbb{P}_{R_*}(T_0>n-1)\\
&=(1-\varepsilon(\delta))\mathbb{P}_{R_*-\delta}(T_0>n-1)
+\varepsilon(\delta)\mathbb{P}_{R_*}(T_0>n-1).
\end{align*}
Iterating this estimate, we obtain
\begin{align}
\label{lem.A2.2}
\mathbb{P}_{R_*}(T_0>n)\le\frac{1-\varepsilon(\delta)}{\varepsilon(\delta)}
\sum_{k=1}^n\varepsilon^k(\delta)\mathbb{P}_{R_*-\delta}(T_0>n-k).
\end{align}
It follows from \eqref{eq.6a} that
$$
\mathbb{P}_{R_*-\delta}(T_0>n-k)\le
\frac{\mathbb{P}_{R_*-\delta}(T_0>n)}
{\mathbb{P}_{R_*-\delta}(T_0>k)\mathbb{P}_{R_*-\delta}(X_k>R_*-\delta)}.
$$
Plugging this into \eqref{lem.A2.2}, we have
\begin{align*}
\mathbb{P}_{R_*}(T_0>n)\le\frac{1-\varepsilon(\delta)}{\varepsilon(\delta)}
\frac{\mathbb{P}_{R_*-\delta}(T_0>n)}{\inf_k \mathbb{P}_{R_*-\delta}(X_k>R_*-\delta)} 
\sum_{k=1}^\infty\frac{\varepsilon^k(\delta)}{\mathbb{P}_{R_*-\delta}(T_0>k)}.
\end{align*}
The summability of the series on the right hand side follows from
Theorem~\ref{thm.log} and from estimate \eqref{lem.A2.1}. Furthermore,
the convergence of $X_n$ towards the stationary distribution $\pi$ implies
that $\inf_k \mathbb{P}_x(X_k>x)$ is positive. Thus, there exists a constant $c$
such that
$$
\mathbb{P}_{R_*}(T_0>n)\le c \mathbb{P}_{R_*-\delta}(T_0>n),\quad n\ge 1.
$$
The monotonicity of $\mathbb{P}_x(T_0>n)$ completes the proof of the first claim.

Using the monotonicity property once again and applying \eqref{lem.A2.0}, we obtain
\begin{align*}
\mathbb{P}_{\nu}(T_0>n)&\ge \frac{\nu[R_*-\delta,R_*]}{c}\mathbb{P}_{R_*}(T_0>n)\\
&\ge \frac{\nu[R_*-\delta,R_*]}{c}\mathbb{P}_{x}(T_0>n),\quad x\in(0,R_*].
\end{align*}
This completes the proof of the lemma.
\end{proof}

We now turn to the condition \eqref{A1-cond}.
\begin{lemma}
\label{lem:A1-cond}
Assume that the distribution of $\xi_1$ has an absolutely continuous component
with the density function $\varphi(x)$ satisfying
\begin{equation}
\label{lem.A1.1}
\varphi(y)\ge \varkappa>0\quad\text{for all }y\in[R-y_0,R],\ y_0>0. 
\end{equation}
Then, for every measurable $A\subseteq[R_*-y_0,R_*-ay_0]$,
$$
\liminf_{n\to\infty}\inf_{x\in[0,R_*]}\mathbb{P}_x(X_n\in A|T_0>n)\ge
\varkappa\pi[R_*-y_0,R_*){\rm Leb}(A)
$$
($\rm Leb$ denoting the Lebesgue measure).
\end{lemma}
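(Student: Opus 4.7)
The strategy I would follow is to force $\{X_n\in A\}$ on $\{T_0>n\}$ via the last step $X_n=aX_{n-1}+\xi_n$: if $X_{n-1}$ is close to $R_*$ and $\xi_n$ falls in the region $[R-y_0,R]$ where \eqref{lem.A1.1} supplies a density lower bound $\varkappa$, then $X_n$ lands in $A$ with density at least $\varkappa$. Control on the probability that $X_{n-1}$ is close to $R_*$ is then provided by the FKG estimate \eqref{eq.5} together with weak convergence of $X_{n-1}$ to $\pi$.

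The key geometric observation, using $R=(1-a)R_*$, is that for every $y\in[R_*-y_0,R_*]$ and $z\in A\subseteq[R_*-y_0,R_*-ay_0]$,
\[
z-ay\in[R-y_0,R],
\]
as one checks by evaluating at the corners (equivalently, $a[R_*-y_0,R_*]+[R-y_0,R]=[R_*-y_0-ay_0,R_*]\supseteq A$). Moreover, started at $x\in(0,R_*]$ the chain stays in $(0,R_*]$ on $\{T_0>k\}$ since $\xi\le R$ a.s.\ and $aR_*+R=R_*$. Conditioning on $X_{n-1}=y\in[R_*-y_0,R_*]$ and using \eqref{lem.A1.1},
\[
\mathbb{P}(X_n\in A\mid X_{n-1}=y)\ge\int_A\varphi(z-ay)\,dz\ge\varkappa\,\mathrm{Leb}(A),
\]
and since $A\subseteq(0,\infty)$ (we may assume $y_0<R_*$; otherwise the conclusion follows at once from this single-step bound), integration yields
\[
\mathbb{P}_x(X_n\in A,\,T_0>n)\ge\varkappa\,\mathrm{Leb}(A)\cdot\mathbb{P}_x\bigl(X_{n-1}\ge R_*-y_0,\,T_0>n-1\bigr).
\]

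Dividing by $\mathbb{P}_x(T_0>n)\le\mathbb{P}_x(T_0>n-1)$ and applying \eqref{eq.5} at levels $R_*-y_0-\varepsilon$, then letting $\varepsilon\downarrow0$, gives
\[
\mathbb{P}_x(X_n\in A\mid T_0>n)\ge\varkappa\,\mathrm{Leb}(A)\cdot\mathbb{P}_x(X_{n-1}\ge R_*-y_0).
\]
By \eqref{eq.1}, the law of $X_{n-1}$ under $\mathbb{P}_x$ is that of $a^{n-1}x$ shifted by an $x$-independent partial sum converging weakly to $\pi$. Since $a^{n-1}x\to0$ uniformly on $[0,R_*]$, and since the identity $X_\infty\stackrel{d}{=}\xi_1+aX_\infty'$ with $X_\infty'\sim\pi$ independent of $\xi_1$ forces $\pi\{R_*\}(1-\mathbb{P}(\xi_1=R))=0$, while \eqref{lem.A1.1} rules out $\mathbb{P}(\xi_1=R)=1$, we conclude $\pi\{R_*\}=0$ and hence
\[
\liminf_{n\to\infty}\inf_{x\in[0,R_*]}\mathbb{P}_x(X_{n-1}\ge R_*-y_0)\ge\pi[R_*-y_0,\infty)=\pi[R_*-y_0,R_*),
\]
which combined with the previous display gives the claim.

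The conceptual crux is the one-line geometric identity above, which places $A$ inside the image of $[R_*-y_0,R_*]$ under one AR(1)-step restricted to the good innovation region. The only mildly delicate technical points are (i) passing from the strict-inequality form of \eqref{eq.5} to a closed-set bound via absence of an atom of $\pi$ at $R_*-y_0$, and (ii) the uniformity in $x\in[0,R_*]$ of the weak convergence $\mathbb{P}_x(X_{n-1}\in\cdot)\to\pi$, which is immediate from $|a^{n-1}x|\le a^{n-1}R_*\to0$.
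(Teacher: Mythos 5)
Your proof is correct and follows essentially the same route as the paper: a one-step bound using the geometric containment $z-ay\in[R-y_0,R]$ for $y\in[R_*-y_0,R_*]$, $z\in A$, combined with the FKG estimate \eqref{eq.5} and weak convergence of $X_{n-1}$ to $\pi$. The only (cosmetic) differences are that the paper obtains uniformity in $x$ via monotonicity in the starting point, reducing to $\mathbb{P}_0(X_n\ge R_*-y_0)$, where you invoke \eqref{eq.1} directly, and your worries about atoms of $\pi$ are unnecessary since the liminf bound for the open sets $(R_*-y_0-\varepsilon,\infty)$ already dominates $\pi[R_*-y_0,R_*)$.
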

\begin{proof}
By the Markov property,
\begin{align*}
\mathbb{P}_x(X_{n+1}\in A|T_0>n+1)
&=\frac{\mathbb{P}_x(X_{n+1}\in A,T_0>n+1)}{\mathbb{P}_x(T_0>n+1)}\\
&\ge\frac{\int_A\left(\int_0^{R_*}\varphi(z-ay)\mathbb{P}_x(X_n\in dy, T_0>n)\right)dz}{\mathbb{P}_x(T_0>n)}.
\end{align*}
Applying \eqref{lem.A1.1}, we get
\begin{align*}
\mathbb{P}_x(X_{n+1}\in A|T_0>n+1)
&\ge \varkappa\int_A\mathbb{P}_x\left(X_n\in\left[\frac{z-R}{a},\frac{z-R+y_0}{a}\right]\Big|T_0>n\right)dz.
\end{align*}
For every $z\ge R_*-y_0$ we have $\frac{z-R+y_0}{a}\ge R_*$. Therefore,
\begin{align*}
\mathbb{P}_x(X_{n+1}\in A|T_0>n+1)
&\ge \varkappa\int_A\mathbb{P}_x\left(X_n\ge\frac{z-R}{a}\Big|T_0>n\right)dz.
\end{align*}
Furthermore, for every $z\le R_*-ay_0$ one has $\frac{z-R}{a}\le R_*-y_0$.
Thus, using now \eqref{eq.5} and recalling that $X_n$ is increasing in the starting
point, we conclude that
\begin{align*}
\inf_{x\in[0,R_*)}\mathbb{P}_x(X_{n+1}\in A|T_0>n+1)
\ge \varkappa \mathbb{P}_0(X_n\ge R_*-y_0){\rm Leb}(A).
\end{align*}
Letting here $n\to\infty$, we get the desired estimate.
\end{proof}
Combining these two lemmata with Proposition 1.2 in \cite{CV16}, we get
\begin{theorem}
\label{prop.bounded}
Assume that the innovations $\xi_i$ are a.s. bounded and that their distribution possesses an absolutely continuous
component satisfying \eqref{lem.A1.1}. Then there exists a positive function
$V(x)$ such that, for each $x\in(0,R_*]$,
$$
\mathbb{P}_x(T_0>n)\sim V(x)e^{-\lambda_a n}\quad\text{as }n\to\infty.
$$
\end{theorem}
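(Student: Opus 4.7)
The plan is to verify that the hypotheses of the lemmata allow us to apply Proposition 1.2 of \cite{CV16}, from which both the existence of a quasi-stationary distribution with exponential convergence in total variation and the announced sharp tail asymptotic can be extracted. For this, one must exhibit a probability measure $\nu$ that simultaneously fulfils the minorization condition \eqref{A1-cond} coming from Lemma~\ref{lem:A1-cond} and the comparison condition \eqref{A2-cond} coming from Lemma~\ref{lem:A2-cond}.

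First I would fix $y_0 \in (0, \delta]$, where $\delta$ is the constant produced by Lemma~\ref{lem:A2-cond}, and small enough so that the interval $I := [R_*-y_0, R_*-ay_0]$ is contained in $(0, R_*]$. Define $\nu$ as the normalized Lebesgue measure on $I$, so that $\nu(B) = \mathrm{Leb}(B\cap I)/\mathrm{Leb}(I)$. Lemma~\ref{lem:A1-cond} applied to Borel sets $A\subseteq I$ yields
\begin{equation*}
\liminf_{n\to\infty}\inf_{x\in(0,R_*]}\mathbb{P}_x(X_n\in A\mid T_0>n)
\ge \varkappa\,\pi[R_*-y_0,R_*)\,\mathrm{Leb}(A),
\end{equation*}
so there exist $n_0\ge 1$ and $c_1>0$ such that for all $x\in(0,R_*]$ and all Borel $A\subseteq I$,
\begin{equation*}
\mathbb{P}_x(X_{n_0}\in A\mid T_0>n_0)\ \ge\ c_1\,\nu(A),
\end{equation*}
which after restricting general Borel sets to $A\cap I$ gives exactly \eqref{A1-cond}. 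Condition \eqref{A2-cond} is obtained immediately from Lemma~\ref{lem:A2-cond}, since by construction $I\subseteq[R_*-\delta,R_*]$ and hence $\nu[R_*-\delta,R_*]=1>0$.

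With \eqref{A1-cond} and \eqref{A2-cond} established for the same measure $\nu$ on the state space $(0,R_*]$, Proposition~1.2 of \cite{CV16} applies and provides a unique quasi-stationary distribution $\nu_{\mathrm{QSD}}$ together with exponential convergence, uniformly in the starting point $x\in(0,R_*]$,
\begin{equation*}
\bigl\|\mathbb{P}_x(X_n\in\cdot\mid T_0>n)-\nu_{\mathrm{QSD}}\bigr\|_{\mathrm{TV}}\ \le\ C\,\gamma^n,
\end{equation*}
for some $\gamma\in(0,1)$. As a standard consequence of this exponential ergodicity of the conditioned chain (the accompanying statement in \cite{CV16} producing the eigenfunction associated with the Perron eigenvalue $e^{-\lambda_a}$), the limit $V(x):=\lim_{n\to\infty}e^{\lambda_a n}\mathbb{P}_x(T_0>n)$ exists, is finite and strictly positive on $(0,R_*]$, and satisfies $V(x)P_+V=e^{-\lambda_a}V(x)$; this is exactly the claim of Theorem~\ref{prop.bounded}.

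The only non-routine point is the passage from the $\liminf$ in Lemma~\ref{lem:A1-cond} to a minorization holding at a fixed time $n_0$ with a $c_1$ independent of $x$, which is needed for the form of \eqref{A1-cond} used in \cite{CV16}; this is resolved by the uniformity over $x\in(0,R_*]$ present in the liminf and by first restricting test sets $A$ to the interval $I$. Beyond that, one should verify that the identification of the exponential decay rate produced by \cite{CV16} coincides with $\lambda_a$ from Theorem~\ref{thm.log}, which is automatic from the uniqueness of the exponential rate on the compact state space $(0,R_*]$ and the comparison \eqref{lem.A2.0}.
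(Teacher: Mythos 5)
Your proof follows the paper's own route exactly: the paper's argument for Theorem~\ref{prop.bounded} is precisely to verify \eqref{A1-cond} and \eqref{A2-cond} through Lemmas~\ref{lem:A1-cond} and \ref{lem:A2-cond} and then invoke Proposition~1.2 of \cite{CV16}, and your explicit choice of $\nu$ as normalized Lebesgue measure on $[R_*-y_0,R_*-ay_0]$ with $y_0\le\delta$ (shrinking $y_0$ preserves \eqref{lem.A1.1}) is the natural way to make that combination precise. One small caveat: the passage from the $\liminf$ in Lemma~\ref{lem:A1-cond} to a minorization at a fixed $n_0$ valid simultaneously for all Borel $A$ is not a consequence of uniformity in $x$ as you suggest, but of the uniform-in-$A$ bound $\varkappa\,\mathbb{P}_0(X_n\ge R_*-y_0)\,{\rm Leb}(A)$ available at every finite $n$ inside that lemma's proof, since the liminf statement alone only gives an $n_0$ depending on $A$.
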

\section{Functional analytic approaches}\label{s:FuAn}
In this section we combine probabilistic insights with some basic functional analytic observations in
order to derive the precise tail behaviour of $T_0$. We want to stress that even though we call this
approach functional analytic we will only make use of rather fundamental properties of compact operators
combined with assertions of Perron-Frobenius type. The functional analytic ingredients can be found in
standard references such as \cite{AB06}, \cite{D07}, \cite{MN91} and \cite{S71}.

\subsection{Quasi-compactness approach for bounded innovations.}\label{sec:quasi-bounded}
The initial idea from the introduction can be most straightforwardly carried through for bounded
innovations $\xi_i$. We assume that they have a density $\varphi$ which is strictly positive on all of
its support $[-A,B]$ ($A,B>0$) and consider $P_+f(x):=\mathbb Ef(ax+\xi_1)$ (where $f(y):=0$ for $y<0$)
as an operator on $C\left(\left[0,\frac B{1-a}\right]\right)$ with the supremum norm. In this case, we are going to show that
$P_+$ is compact with a simple largest eigenvalue $e^{-\lambda_a}$ strictly between 0 and 1 and can then
conclude
\begin{equation}
\label{expAbfb}
\mathbb P_x(T_0>n)=P_+^n\mathbf 1_{[0,\infty)}(x) = V(x)e^{-\lambda_an} + O(e^{-(\lambda_a+\varepsilon)n})
\end{equation}
for some nonnegative $V$ and $\varepsilon>0$. Apart from condition~\eqref{lem.A1.1}, which is not needed
in this approach, the result is contained in Theorem~\ref{prop.bounded}. Our main purpose here is to
finally lead over to those cases of unbounded innovations in which conditions from \cite{CV16} are not valid.

For any continuous $f$ and $x,y\in\left[0,\frac B{1-a}\right]$, $|P_+f(x)|\le\|f\|$ and
\begin{align}\label{gleichgr13}
|P_+f(x)-P_+f(y)| =& \Big|\int[\varphi(z-ax)-\varphi(z-ay)]f(z)\text dz\Big| \\
\le& \|f\|\int|\varphi(z-ax)-\varphi(z-ay)|\text dz\,.
\end{align}
This goes to zero for $y\to x$, i.e. $P_+$ maps bounded families to equicontinuous ones and, therefore,  is compact. $(X_n)$ clearly reaches zero from any starting point in $\left[0,\frac B{1-a}\right]$ if $\xi_1,\dots,\xi_{\left\lceil\frac{B}{1-a}\frac2A\right\rceil}<-\frac A2$, which happens with positive probability, so
\begin{align*}
\|P_+^{\left\lceil\frac{B}{1-a}\frac2A\right\rceil}\| = \|P_+^{\left\lceil\frac{B}{1-a}\frac2A\right\rceil}1_{[0,\infty)}\| = \sup_x\mathbb P_x\left(T_0>\left\lceil\frac{B}{1-a}\frac2A\right\rceil\right)<1\,.
\end{align*}
Consequently, all eigenvalues of $P_+$ must have modulus less than 1. We now invoke the following generalization of the Perron-Frobenius theorem (see Theorems 6 and 7 in \cite{S64}):\vspace{6pt}\\
\textbf{Theorem A} (see \cite{S64}).\textit{
Let $K$ be a proper closed cone in a Banach space $B$ which is fundamental and assume that $B$ is a lattice with respect to the ordering induced by $K$. Let $T:B\rightarrow B$ be quasicompact operator, which is positive with respect to $K$, i.e. $TK\subset K$. Further assume that for each $B\ni f>0, B^*\ni f^*>0$ there exists an integer $n(f,f^*)\geq 0$ such that $f^*(T^nf)>0$ for $n>n(f,f^*)$. Then 
\begin{itemize}
\item[a)] the spectral radius $r(T) \in \sigma(T)$ has algebraic multiplicity $1$ and is the only element in $\sigma(T)$ with absolute value equal to $r(T)$;
\item[b)] the eigenspace corresponding to the eigenvalue $r(T)$ is one-dimensional and is spanned by a strictly positive element $u$;
\item[c)] there exists a strictly positive element $u^*$ such that $T^*u^*=r(T) u^*$. 
\end{itemize} }

``Quasicompact'' should now be thought of as ``compact'', the general version will be needed and explained in the proof of Theorem~\ref{th:quasicompact}.

In our case, $B=C\left(\left[0,\frac B{1-a}\right]\right)$ and for $K$ we take the cone of nonnegative functions. It is closed, proper (i.e. $K\cap -K=\{0\}$) and fundamental (this means that $K$ spans a dense subset of $B$, which is clear since $K-K=B$). $P_+$, taking the role of $T$, is positive. The density $\varphi$ was assumed to be strictly positive, so positivity holds true even in the stronger sense that $P_+$ maps nonnegative functions which are somewhere strictly positive to functions which are everywhere strictly positive. For our choice of the space $B$ its dual space $B^*$ consists of all functionals $f\mapsto\int f\text d\mu$ with finite signed Borel measures $\mu$, positive functionals correspond to positive measures. Therefore, this strong positivity implies that $f^*(Tf)>0$ for all $f>0$ and all positive $f^*\in B^*$.

Theorem A is therefore applicable. It allows for the general conclusion that, for some $\varepsilon >0$ and any $f>0$,
\begin{equation}\label{Thochnallg}
T^n f = r(T)^n\,u^*(f)\,u + \mathcal{O}\bigl((r(T)-\varepsilon)^n\bigr)
\end{equation}
in the space $B$, which, applied to our setting with $f=\mathbf1_{\left[0,\frac{B}{1-a}\right]}$, yields \eqref{expAbfb}. 

In fact, the computation \eqref{gleichgr13} also works for bounded measurable $f$ and shows that $P_+$ is compact on the corresponding space $B\left(\left[0,\frac B{1-a}\right]\right)$. 
It should be noted that the corresponding dual space consists of signed finitely additive and absolutely continuous measures.
Consequently, \eqref{Thochnallg} is also applicable for indicator functions $f=\mathbf1_A$ with measurable $A\subset \left[0,\frac{B}{1-a}\right]$ and yields
$$\mathbb P_x(X_n\in A,T_0>n) = \nu(A)V(x)e^{-\lambda_an}+\mathcal O(e^{-(\lambda_a+\varepsilon)n})$$
with same factor $\nu(A)$ which is strictly positive unless $A$ has measure zero and, together with \eqref{expAbfb},
$$
\mathbb P_x(X_n\in A\mid T_0>n) = \nu(A) + \mathcal O(e^{-\varepsilon n}).
$$
In other words, we have an exponentially fast convergence to a quasistationary distribution $\nu$.

\subsection{Quasi-compactness approach for unbounded innovations}
If the innovations $\xi_i$ are absolutely continuous, but are unbounded, $P_+$ still maps bounded families to equicountinous ones and has the same positivity properties, but is in general not compact. A way out is to choose a more suitable Banach space. This can be, for example, explicitely carried through in the case of innovations with standard normal distributions. In this case, $X$  is a discretized Ornstein-Uhlenbeck process: The latter is given, e.g., by the stochastic differential equation
\begin{displaymath}
dZ_t = \theta Z_t\,dt+\sigma dB_t,\quad Z_0=x,
\end{displaymath}
where $\theta,\sigma>0$ are constants and $(B_t)_{t\geq 0}$ denotes a Brownian motion. The random variable $Z_t$ is normal distributed with 
\begin{displaymath}
\mathbb{E}[Z_t] = x e^{-\theta t}\quad \text{and}\quad \text{Var}[Z_t]=\frac{\sigma^2}{2\theta}(1-e^{-2\theta t}).
\end{displaymath}
For the chain $X_n$ with standard normal distributed innovations we have
\begin{displaymath}
\mathbb{P}_x\bigl(X_1 \in B) = \int_B\frac{1}{\sqrt{2\pi}}e^{-\frac{(ax-y)^2}{2}}\,dy.
\end{displaymath}
Taking 
\begin{displaymath}
\theta=\log(a^{-1})\quad \text{and}\quad \sigma^2=\frac{2\log(a^{-1})}{1-a^2}\,,
\end{displaymath}
we see that $Z_1$ and $X_1$ are identically distributed.

$Z$ has $\mathcal N(0,\frac{\sigma^2}{2\theta})$ as the stationary distribution and $Pf(x):=\mathbb Ef(ax+\xi_1)$ is a self-adjoint compact operator with norm 1 on $B:=L^2\left(\mathbb R,\mathcal N\left(0,\frac{\sigma^2}{2\theta}\right)\right)$. This is a well-known result and can be seen e.g. from its diagonal representation in the Hermite polynomials, which form a complete set of eigenfunctions. Consequently, also $$P_+:=1_{[0,\infty)}P1_{[0,\infty)}$$ is compact and self-adjoint. Its norm is strictly below 1. (Otherwise, one could find a normalised sequence $(f_n)$ with $\|P_+f_n\|\ge1-\frac1n$ for all $n\in\mathbb N$. A subsequence $(f_{n_k})$ would have a weak limit $f$ with $\|f\|\le1$ and, by compactness, $P_+f_{n_k}\to P_+f$ in norm, in particular $\|P_+f\|=1$. Then, $P1_{[0,\infty)}f$ would have norm 1, but the spectral representation of $P$ shows that $g\equiv1$ is the only function with $\|g\|\le1$ and $\|Pg\|=1$. Therefore, one can apply Theorem A in the same way as for bounded innovations. A result of a similar type has been shown in \cite{AB11}, but the conclusion drawn has been somewhat weaker.  

With somewhat more effort, one could also work on the space of continuous functions with the norm $\|f\|:=\sup_x|f(x)\sqrt{\pi(x)}|$, where $\pi$ is the density of the $\mathcal N(0,\frac{\sigma^2}{2\theta})$ distribution, so the crucial step was to introduce a suitable weight function, whereas $L^2$ instead of $C$ brought only computational benefits in this particular case.

Let us now consider general AR(1)-processes with possibly unbounded innovations.  Assume that for some $M>0$, some $\varepsilon >0$ and $\tilde{\lambda}=\lambda_a+\varepsilon$ we have
\begin{equation}\label{e:quasicomp}
\Lambda(x):=\mathbb{E}_x\bigl[ e^{\tilde{\lambda} {T}_M}\bigr] < \infty\,,
\end{equation}
where
$$
{T}_M:=\min\{k\ge1:X_k\le M\}\,.
$$
This property will be later shown to hold, whenever the innovations have moments of all orders, i.e. we do not require existence of an exponential moment. We introduce the Banach space $B(\mathbb R_0^+)$ of measurable functions on $[0,\infty)$ equipped with the norm 
\begin{displaymath}
\| f \|_\Lambda := \|\Lambda^{-1}f\|_{\infty} < \infty\,.
\end{displaymath}
$\Lambda^{-1}$ is decreasing in $x\in\mathbb R_0^+$ and clearly takes values in $(0,1]$. In contrast to other weight functions with these properties, it is computationally well tractable by the aid of the Markov property.

Actually the approach we now present is an adaption of a standard approach to the ergodicity of Markov chains via quasicompactness (see e.g. Chapter 6 in \cite{R84} as well as \cite{HH00}). In the literature on quasistationary distributions an approach of this type has been presented in \cite{G01}. Our main goal is to indicate via the example of autoregressive processes that the results of \cite{CV17} can be to a large extent reproved via the concept of quasicompact operators.  We hope that our outline of standard analytic ideas in a probabilistic context iluminates further the interrelationship between probabilistic and analytic concepts related to persistence exponents/quasistationarity on the one hand and spectral theoretic ideas on the other.

\begin{proposition}\label{prop:trop}
The transition operator $P$
\begin{displaymath}
Pf(x):=\mathbb{E}_x\bigl[f(X_1),{T}_0>1\bigr]
\end{displaymath}
defines a bounded operator on $(B(\mathbb{R}_0^+),\|\cdot\|_\Lambda)$. Moreover, the spectral radius $r(P)$ is lower bounded by $e^{-\lambda_a}$. 
\end{proposition}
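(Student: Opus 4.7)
My plan is to prove boundedness via a one-step Markov decomposition of $\Lambda$ and to get the spectral radius lower bound directly from Theorem~\ref{thm.log}.

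For boundedness, the key identity is obtained by applying the strong Markov property at time $1$ in the definition of $\Lambda$. Splitting according to whether $X_1\le M$ or $X_1>M$ gives
\begin{equation*}
\Lambda(x)=\mathbb{E}_x\bigl[e^{\tilde\lambda T_M}\bigr]
=e^{\tilde\lambda}\,\mathbb{P}_x(X_1\le M)+e^{\tilde\lambda}\,\mathbb{E}_x\bigl[\Lambda(X_1),X_1>M\bigr],
\end{equation*}
so that $e^{-\tilde\lambda}\Lambda(x)=\mathbb{P}_x(X_1\le M)+\mathbb{E}_x[\Lambda(X_1),X_1>M]$. On the other hand, the monotonicity property \eqref{eq.3}, applied to $T_M$ rather than $T_0$, yields that $\Lambda$ is non-decreasing in $x$, so $\Lambda(X_1)\le\Lambda(M)$ on $\{0<X_1\le M\}$. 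Combining these observations,
\begin{equation*}
\mathbb{E}_x\bigl[\Lambda(X_1),T_0>1\bigr]
\le\Lambda(M)\,\mathbb{P}_x(X_1\le M)+\mathbb{E}_x\bigl[\Lambda(X_1),X_1>M\bigr]
\le\Lambda(M)\,e^{-\tilde\lambda}\Lambda(x),
\end{equation*}
where in the last step we used $\Lambda(M)\ge e^{\tilde\lambda}>1$. Dividing by $\Lambda(x)$ and bounding $|f(X_1)|\le\|f\|_\Lambda\Lambda(X_1)$ shows that
\begin{equation*}
\|Pf\|_\Lambda\le e^{-\tilde\lambda}\Lambda(M)\,\|f\|_\Lambda,
\end{equation*}
which proves that $P$ is bounded on $(B(\mathbb{R}_0^+),\|\cdot\|_\Lambda)$.

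For the spectral radius bound I would test $P^n$ on $f\equiv\mathbf{1}$. Note $\|\mathbf{1}\|_\Lambda=(\inf_x\Lambda(x))^{-1}\le e^{-\tilde\lambda}<\infty$, so $\mathbf{1}\in B(\mathbb{R}_0^+)$. Since $P^n\mathbf{1}(x)=\mathbb{P}_x(T_0>n)$, one has for any fixed $x_0$ in the support of $\pi$ the lower bound
\begin{equation*}
\|P^n\|^{1/n}\ge\left(\frac{\Lambda(x_0)^{-1}\,\mathbb{P}_{x_0}(T_0>n)}{\|\mathbf{1}\|_\Lambda}\right)^{1/n}.
\end{equation*}
Letting $n\to\infty$ and invoking Theorem~\ref{thm.log}, the right hand side converges to $e^{-\lambda_a}$, and by Gelfand's formula $r(P)=\lim_n\|P^n\|^{1/n}\ge e^{-\lambda_a}$.

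The step I expect to require the most care is the monotonicity of $\Lambda$. This is not quite the same as \eqref{eq.3} because $T_M$ is a first passage \emph{downward} time and, unlike $T_0$, it does not use $X_0$; however, the same coupling argument based on the representation $X_k=a^k x+\sum_{j=1}^k a^{k-j}\xi_j$ shows that trajectories started from larger $x$ dominate pathwise, hence $T_M$ is stochastically non-decreasing in $x$ and so is $\Lambda$. Apart from this, finiteness of $\|\mathbf{1}\|_\Lambda$ (i.e.\ the strict positivity of $\inf_x\Lambda(x)$) is immediate from $T_M\ge 1$, which together with the monotonicity of $\Lambda$ makes the remaining estimates purely mechanical.
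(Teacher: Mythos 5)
Your proof is correct and follows essentially the same route as the paper: the one-step Markov identity for $\Lambda$ (the paper's \eqref{Markov7}) combined with $\Lambda\le\Lambda(M)$ on $[0,M]$ gives the drift bound on $P\Lambda$ (you state it multiplicatively as $P\Lambda\le e^{-\tilde\lambda}\Lambda(M)\Lambda$, the paper additively as $P\Lambda\le\Lambda(M)+e^{-\tilde\lambda}\Lambda$, which is immaterial), and the spectral radius bound comes from testing $P^n$ on $\mathbf 1$, using $P^n\mathbf 1(x)=\mathbb P_x(T_0>n)$, Theorem~\ref{thm.log} and Gelfand's formula, exactly as in the paper. Your explicit coupling justification of the monotonicity of $\Lambda$ is a detail the paper leaves implicit, but it is the same argument.
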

\begin{proof}
For the boundedness, we observe that for $f \in B(\mathbb{R_+})$ with $\|f\|_\Lambda\leq 1$
\begin{equation*}
\begin{split}
\|Pf\|_\Lambda\leq \sup_{x >0}\bigl|\Lambda(x)^{-1}(P\Lambda)(x)\bigr| \,.
\end{split}
\end{equation*}
By the Markov property,
\begin{equation}\label{Markov7}
\mathbb E_x\left[e^{\tilde\lambda  T_M}\mid X_1\right]
= e^{\tilde\lambda}\mathbf1_{0<X_1\le M} + e^{\tilde\lambda} \Lambda(X_1)\mathbf1_{X_1>M} \,.
\end{equation}
Therefore, for every $x\ge0$ we have
\begin{equation*}
\begin{split}
(P\Lambda)(x)=& \mathbb E_x[\Lambda(X_1),0<X_1\le M] + \mathbb E_x[\Lambda(X_1),X_1>M] \\
\le&\Lambda(M) + e^{-\tilde\lambda}\mathbb E_x \left[e^{\tilde\lambda T_M},X_1>M\right]  \\
\le& \Lambda(M)+e^{-\tilde\lambda}\Lambda(x)\,,
\end{split}
\end{equation*}
proving that $P$ is bounded.
In order to prove the assertion concerning the spectral radius we observe that, for $x>0$,
\begin{displaymath}
\|P^n\|\geq \frac{1}{\Lambda(x)}(P^n\mathbf{1})(x)
\end{displaymath} 
and that therefore 
\begin{displaymath}
r(P):=\lim_{n\rightarrow \infty}\|P^n\|^{1/n}\geq e^{-\lambda_a}
\end{displaymath} 
by the Gelfand formula.
\end{proof}

\begin{theorem}\label{th:quasicompact}
Assume that condition \eqref{e:quasicomp} is satisfied and that the innovations are distributed according to a density $\varphi$. Then the operator $P$ is quasicompact on $(B(\mathbb{R}_0^+),\|\cdot\|_\Lambda)$. If, in addition, $\varphi>0$ a.e., the spectral radius $r(P)$ is an isolated eigenvalue with algebraic multiplicity $1$ and all other spectral values have absolut value strictly smaller than $r(P)$.
\end{theorem}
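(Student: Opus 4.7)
My plan is to establish quasicompactness via a Doeblin--Fortet-type splitting $P=P_M+P_M^c$ with
$$P_Mf(x):=\mathbb E_x[f(X_1);0<X_1\le M],\qquad P_M^cf(x):=\mathbb E_x[f(X_1);X_1>M],$$
aiming to show that $P_M$ is compact and that $\|P_M^c\|_\Lambda$ is strictly smaller than $r(P)$. The peripheral spectral assertions under $\varphi>0$ a.e.\ will then be harvested from Theorem A applied to $P$.

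For the contractivity of $P_M^c$ I would reuse the Markov-property identity~\eqref{Markov7} already exploited in the proof of Proposition~\ref{prop:trop}: taking $\|f\|_\Lambda\le 1$ so that $|f|\le\Lambda$, one has
$$|P_M^cf(x)|\le\mathbb E_x[\Lambda(X_1);X_1>M]=e^{-\tilde\lambda}\mathbb E_x[e^{\tilde\lambda T_M};X_1>M]\le e^{-\tilde\lambda}\Lambda(x),$$
so $\|P_M^c\|_\Lambda\le e^{-\tilde\lambda}<e^{-\lambda_a}\le r(P)$ since $\tilde\lambda=\lambda_a+\varepsilon$.

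The main work lies in establishing compactness of $P_M$. Given a sequence $(f_n)$ with $\|f_n\|_\Lambda\le 1$, so that $|f_n|\le\Lambda(M)$ on $(0,M]$, I would first control the tail via $|P_Mf_n(x)|\le\Lambda(M)\,\mathbb P(\xi_1\in[-ax,M-ax])$, which tends to $0$ uniformly in $n$ as $x\to\infty$ and hence tames $\Lambda^{-1}P_Mf_n$ at infinity. On any compact $[0,R]$, equicontinuity follows from
$$|P_Mf_n(x)-P_Mf_n(x')|\le\Lambda(M)\int|\varphi(y-ax)-\varphi(y-ax')|\,dy$$
and the $L^1$-continuity of translation, so an Arzel\`a--Ascoli argument together with a diagonal extraction as $R\uparrow\infty$ produces a subsequence converging in $\|\cdot\|_\Lambda$. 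Granting compactness of $P_M$, expanding $P^n-(P_M^c)^n$ as a sum of noncommutative products each containing at least one $P_M$ factor exhibits $P^n$ as a compact perturbation of an operator of norm at most $e^{-n\tilde\lambda}$, whence $r_{\mathrm{ess}}(P)\le e^{-\tilde\lambda}<r(P)$.

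To finish, I would take $K$ to be the cone of nonnegative functions in $B(\mathbb R_0^+)$ (closed, proper, fundamental, and making $B$ a lattice under pointwise order) and apply Theorem A. Positivity $PK\subset K$ is immediate, and the strong-positivity hypothesis is where the assumption $\varphi>0$ a.e.\ enters: iterated convolution shows $P^nf(x)>0$ for every $x\ge 0$, $n\ge 1$, and $f\in K\setminus\{0\}$, and I expect this pointwise strict positivity to yield $f^*(P^nf)>0$ for every positive nonzero $f^*\in B^*$. Theorem A then delivers simplicity of $r(P)$ and uniqueness on the peripheral circle, while isolation is already provided by quasicompactness. The hardest part will be the compactness of $P_M$, where the unbounded domain and the growth of $\Lambda$ must be reconciled through a careful tail-versus-compact splitting; a subsidiary concern is verifying strong positivity against the rather unwieldy dual of $(B(\mathbb R_0^+),\|\cdot\|_\Lambda)$, which is less transparent than the $C$-setup of Section~\ref{sec:quasi-bounded}.
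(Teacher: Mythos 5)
Your proof is correct, and it follows the same broad strategy as the paper (a compact-plus-contraction splitting of $P$, the Lyapunov-type bound derived from \eqref{Markov7}, and then Theorem A), but with a genuinely coarser decomposition. The paper splits $P$ into three pieces, $U_1=P\mathbf 1_{[0,M]}$, $U_2=\mathbf 1_{[0,M]}P\mathbf 1_{(M,\infty)}$ and $U_3=\mathbf 1_{(M,\infty)}P\mathbf 1_{(M,\infty)}$, proves compactness of both $U_1$ and $U_2$ (the latter being the most laborious step, via the finite-horizon approximations $I_{f,k}$ and the remainders $R_{1,N}$), and only bounds the norm of $U_3$ by $e^{-\tilde\lambda}$ for $x>M$. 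You instead take $P=P_M+P_M^c$ with $P_M=U_1$ and $P_M^c=P\mathbf 1_{(M,\infty)}=U_2+U_3$, and observe that the estimate $\mathbb E_x[\Lambda(X_1);X_1>M]=e^{-\tilde\lambda}\mathbb E_x[e^{\tilde\lambda T_M};X_1>M]\le e^{-\tilde\lambda}\Lambda(x)$ — which is exactly the computation already used in the proof of Proposition~\ref{prop:trop} — is valid for \emph{all} $x\ge0$, not just $x>M$, so that $\|P_M^c\|_\Lambda\le e^{-\tilde\lambda}<e^{-\lambda_a}\le r(P)$ on the whole space. This is a legitimate simplification: it makes the compactness proof for $U_2$ superfluous and yields Sasser's quasicompactness directly with $n=1$ (your compactness argument for $P_M$ — uniform bound by $\Lambda(M)$ on $(0,M]$, $L^1$-continuity of translations of $\varphi$ for equicontinuity, decay of $\mathbb P(\xi_1\le M-ax)$ at infinity, Arzel\`a--Ascoli plus diagonal extraction — is essentially the paper's proof for $U_1$). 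The one point where you are no more complete than the paper is the application of Theorem A: the verification that strict pointwise positivity of $P^nf$ gives $f^*(P^nf)>0$ for every positive $f^*$ in the dual of the weighted space $(B(\mathbb R_0^+),\|\cdot\|_\Lambda)$ is nontrivial (the dual contains finitely additive functionals concentrated at infinity, and $P^nf$ need not dominate a multiple of $\Lambda$); you flag this honestly, and the paper glosses over it with the phrase ``as in Section~\ref{sec:quasi-bounded}'', so it is a shared, not a new, gap.
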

\begin{proof}
Let us decompose $P$ into the sum of the following operators: 
\begin{equation*}
\begin{split}
U_1:= P\mathbf{1}_{[0,M]}\,,\quad U_2:= \mathbf{1}_{[0,M]}P\mathbf{1}_{(M,\infty)}\,,\quad 
U_3:= \mathbf{1}_{(M,\infty)}P\mathbf{1}_{(M,\infty)}\,.
\end{split}
\end{equation*}
The operators $U_1$ and $U_2$ are easily seen to be compact. For this, we first note that 
\begin{displaymath}
\lbrace U_1f ; \|f\|_\Lambda\leq 1\rbrace  = \lbrace \mathbb{E}_x[f(X_1)\Lambda(X_1),0<X_1\leq M] ;\|f\|\leq 1\rbrace
\end{displaymath}
is bounded and equicontinuous. This follows immediately from the fact that $\Lambda(z)\le \Lambda(M)$ for $0\le z\le M$ and
\begin{align*}
\mathbb{E}_x[f(X_1)\Lambda(X_1),0<X_1\leq M] =  \int_0^M\Lambda(z)f(z)\varphi(z-ax)\text dz\,.
\end{align*} 
The boundedness also implies
$$
\lim_{x\to\infty}\sup_{f:\|f\|_\Lambda\le1}\Lambda^{-1}(x)\mathbb E_x[f(X_1),0<X_1\le M)]=0\,,
$$ 
so $\lbrace U_1f ; \|f\|_\Lambda\leq 1\rbrace$ is precompact and $U_1$ is compact.

By \eqref{Markov7}, for $x\le M$,
\begin{align*}
U_2(\Lambda f)(x) = \mathbb E_x\left[f(X_1)\Lambda(X_1),X_1>M\right]
= e^{-\tilde\lambda} \mathbb E_x\left[f(X_1)e^{\tilde\lambda T_M},X_1>M\right]\,,
\end{align*}
so for all $f$ with $\|f\|\le1$ we have 
$$|U_2(\Lambda f)(x)|\le e^{-\tilde\lambda}\Lambda(M)\,,$$
i.e. $\{U_2f ; \|f\|_\Lambda\le1\}$ is bounded w.r.t. $\|\cdot\|_\Lambda$, and
\begin{align*}
|U_2(\Lambda f)(y)-U_2(\Lambda f)(x)| \le e^{-\tilde\lambda}\sum_{k=2}^N |I_{f,k}(y)-I_{f,k}(x)| + 2e^{-\tilde\lambda}\sup_{0\le x\le M}|R_{1,N}(x)| 
\end{align*}
with
\begin{align*}
I_{f,k}(x) :=& e^{\tilde\lambda k}\mathbb E_x\left[f(X_1)\mathbf1_{X_1>M,\,T_M=k}\right] \\
=& e^{\tilde\lambda k}\int_M^\infty\dots\int_M^\infty\int_{-\infty}^Mf(x_1)\Phi_{x,k}(x_1,\dots,x_k)\text d(x_k,\dots,x_1)\,,
\end{align*}
where $$\Phi_{x,k}(x_1,\dots,x_k):=\varphi(x_1-ax)\varphi(x_2-ax_1)\dots\varphi(x_k-ax_{k-1})$$
stands for the common density of $X_1,\dots,X_k$ given $X_0=x$, and
\begin{align*}
R_{f,N}(x):= \mathbb E_x\left[f(X_1)e^{\tilde\lambda T_M},T_M>N\right]\,.
\end{align*}
Clearly, 
$$
\sup_{0\le x\le M}R_{1,N}(x)=R_{1,N}(M)\to0\quad\text{as }N\to\infty. 
$$
Moreover, 
$$
\Phi_{y,k}(x_1,\dots,x_{k})=\Phi_{x,k}(x_1-a(y-x),\dots,x_{k}-a^{k}(y-x))\,,
$$
so
\begin{align*}
|I_{f,k}(y)-I_{f,k}(x)|\le e^{\tilde\lambda k}\int\dots\int|\Phi_{y,k}-\Phi_{x,k}|\text d(x_1,\dots,x_k)\xrightarrow{y\to x}0\,.
\end{align*}
(This is clear for continuous $\Phi$ with compact support and follows easily for general $\Phi$ if one approximates them in $L^1$ by such ones.)
Consequently, $\{\left.(U_2f)\right|_{[0,M]}; \|f\|_\Lambda\le1\}$ is equicontinuous, $U_2$ is compact, too, and
$$
P=L+U_3,
$$
where $L$ is a compact operator.

We now estimate the operator norm of $U_3$: For $x>M$, \eqref{Markov7} yields
\begin{displaymath}
(U_3\Lambda)(x)=\mathbb{E}_x\bigl[\Lambda(X_1),X_1>M\bigr] \leq e^{-\tilde{\lambda}}\Lambda(x)
\end{displaymath}
and therefore we have for the operator norm of $U_3$
\begin{displaymath}
\|U_3\| \leq e^{-\tilde{\lambda}},
\end{displaymath}
which, again by the Gelfand formula, tells us that 
\begin{displaymath}
r(U_3)=\lim_{n\rightarrow\infty}\| U_3^{n}\|^{1/n} \leq e^{-\tilde{\lambda}}<e^{-\lambda_a}\le r(P)\,,
\end{displaymath}
the latter by Proposition~\ref{prop:trop}.

According to the definition in \cite{S64}, which calls an operator P quasicompact if $P^n = L + U$ for some $n\in\mathbb  N$, compact operator $L$ and  bounded operator $U$ with $\rho(U)<\rho(P)^n$, $P$ is quasicompact.
Applying Theorem A as in Section \ref{sec:quasi-bounded} allows to deduce the remaining statements of the theorem. 
\end{proof}
\begin{remark}
Observe that the main ingredient in the above proof is only the finitenes of $\mathbb{E}_x\bigl[e^{\tilde{\lambda}T_M}]$ for some $M>0$, which together with some 'local' compact perturbation argument ensures that there is a gap seperating the largest eigenvalue from the remaining parts of the spectrum. Therefore, we expect that the method will
work in other settings, too. 
\end{remark}
\begin{corollary}
\label{cor:qc}
Assume that condition \eqref{e:quasicomp} is satisfied and let us assume that the innovations have a strictly positive continuous density. Then there exists $\delta>0$ such that for every $x>0$ and every measurable set $A\subset (0,\infty)$
\begin{displaymath}
\mathbb{P}_x\bigl(X_n \in A;T_0>n)= V(x)e^{-\lambda_a n}+O\Bigl((e^{-\lambda_a}-\delta)^n\Bigr).
\end{displaymath}
\end{corollary}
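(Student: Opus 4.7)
The plan is to read off the corollary directly from the spectral decomposition of $P$ established in Theorem~\ref{th:quasicompact}. Since $P = L + U_3$ with $L$ compact and $r(U_3) \le e^{-\tilde\lambda} < e^{-\lambda_a} \le r(P)$ (using Proposition~\ref{prop:trop}), the essential spectral radius of $P$ is bounded by $e^{-\tilde\lambda}$, so every spectral value of $P$ with modulus at least $e^{-\lambda_a}$ is an isolated eigenvalue of finite algebraic multiplicity. Combining this with the Perron--Frobenius conclusion (Theorem~A applied in the proof of Theorem~\ref{th:quasicompact}) that $r(P)$ is an algebraically simple eigenvalue and the unique element of $\sigma(P)$ of maximum modulus, standard Riesz functional calculus yields the spectral decomposition
\begin{equation*}
P^n = r(P)^n\,\Pi + R^n, \qquad \Pi f = u^*(f)\,u, \qquad \|R^n\|_{\Lambda} \le C(r(P)-\delta)^n
\end{equation*}
for some $\delta > 0$ and $C > 0$, where $u$ is the positive eigenfunction and $u^*$ the positive functional given by Theorem~A.

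Next I would identify $r(P) = e^{-\lambda_a}$: the lower bound is Proposition~\ref{prop:trop}, and the upper bound follows by evaluating the decomposition at the constant function $\mathbf{1}$. Indeed $P^n\mathbf{1}(x) = \mathbb{P}_x(T_0 > n)$, and the decomposition gives $\mathbb{P}_x(T_0 > n) \sim r(P)^n\,u^*(\mathbf{1})\,u(x)$, which combined with the logarithmic rate from Theorem~\ref{thm.log} forces $r(P) = e^{-\lambda_a}$. Here one uses that $u^*(\mathbf{1}) > 0$ and $u(x) > 0$ (strict positivity coming from the strictly positive density assumption together with part (b) of Theorem~A).

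To obtain the corollary, observe that for every measurable $A \subset (0,\infty)$ the indicator $\mathbf{1}_A$ lies in $B(\mathbb{R}_0^+)$ with $\|\mathbf{1}_A\|_{\Lambda} \le 1$ since $\Lambda \ge 1$. Iterating the definition of $P$ gives $P^n\mathbf{1}_A(x) = \mathbb{P}_x(X_n \in A,\,T_0 > n)$, and applying the spectral decomposition yields
\begin{equation*}
\mathbb{P}_x(X_n \in A,\,T_0 > n) = e^{-\lambda_a n}\,u^*(\mathbf{1}_A)\,u(x) + (R^n \mathbf{1}_A)(x),
\end{equation*}
where the pointwise remainder is controlled by
\begin{equation*}
|(R^n \mathbf{1}_A)(x)| \le \Lambda(x)\,\|R^n \mathbf{1}_A\|_{\Lambda} \le C\,\Lambda(x)\,(e^{-\lambda_a}-\delta)^n.
\end{equation*}
Setting $V(x) := u^*(\mathbf{1}_A)\,u(x)$ (which factors as $\nu(A)u(x)$ with $\nu$ the quasi-stationary measure associated to $u^*$) gives the stated asymptotics, with the implicit $O$-constant depending on $x$ through $\Lambda(x)$.

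The main obstacle I anticipate is not in the algebraic steps above but in being careful about the identification $r(P) = e^{-\lambda_a}$ and in verifying that the bound on $\|R^n\|_\Lambda$ is genuinely uniform in $A$; the latter works because the unit ball of $B(\mathbb{R}_0^+)$ contains all indicator functions, so the operator-norm bound on $R^n$ immediately transfers to a uniform-in-$A$ bound on $(R^n\mathbf{1}_A)(x)$ once one absorbs the factor $\Lambda(x)$ into the constant for each fixed starting point $x$.
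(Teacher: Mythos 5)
Your proof is correct and follows essentially the same route as the paper: quasicompactness of $P$ from Theorem~\ref{th:quasicompact} together with Theorem A yields the spectral-gap decomposition (the analogue of \eqref{Thochnallg}), which you evaluate at $\mathbf 1$ to identify $r(P)=e^{-\lambda_a}$ via Proposition~\ref{prop:trop} and Theorem~\ref{thm.log}, and at indicators $\mathbf 1_A$ (admissible since $\Lambda\ge1$) to read off the corollary. Your leading factor $u^*(\mathbf 1_A)\,u(x)=\nu(A)u(x)$ is the intended meaning of the statement, matching the bounded-innovations case in Section~\ref{sec:quasi-bounded}.
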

This result is partly contained in the recent work \cite{CV17} by Champagnat and Villemonais.
If $\mathbb{E}e^{\theta(\xi_1)\log\xi_1}<\infty$ for some function $\theta(x)$ such that $\theta(x)\uparrow \infty$
and $\ell(x):=\theta(x)\log x$ is concave, then
$$
\frac{\mathbb{E}e^{\ell(ax+\xi_1)}}{e^{\ell(x)}}\le\frac{e^{\ell(ax)}\mathbb{E}e^{\ell(\xi_1)}}{e^{\ell(x)}}
\le e^{-\theta(ax)\log(a)}\mathbb{E}e^{\ell(\xi_1)}\to0\quad\text{as }x\to\infty
$$
and, consequently, in this case their Proposition 7.2 can be applied to AR$(1)$-sequences and gives 
the same type of convergence towards a quasi-stationary distribution. It is obvious that $\theta(x)=\log^b(x)$
with $b>0$ satisfies the conditions mentioned above. We shall see later that \eqref{e:quasicomp} holds for 
innovations having all power moments. Therefore, the condition $\mathbb{E}e^{\log^{1+b}\xi_1}<\infty$ is slightly
stronger than the existence of all power moments.

\section{Alternative approach for unbounded innovations}\label{s:renewal}
In this section we present another approach to investigate the tails of the hitting times, which, in contrast to Perron-Frobenius-type methods, has the potential to deal with situations where the transition operator is not quasicompact and to identify additional polynomial decay factors. Apart from birth/death processes and one-dimensional diffusions quasistationary convergence in cases with no spectral gap has not yet been established. Moreover, only basic properties of compact operators play a role and therefore the functional analytic machinery will be more straightforward.

As an alternative to the search for a weight function, we now start with the following observation: The larger $X_n$ is, the more it is diminished by the prefactor $a$ in the recursion $X_{n+1}=aX_n+\xi_{n+1}$. In contrast, adding $\xi_{n+1}$ has always the same absolute effect, no matter how large $X_n$ is. Therefore, in some sense, it is easier for the process to reach average positive values from extremely large values than to reach zero from average values, so, also in the case of unbounded innovations, the main part in estimating $\mathbb P_x(T_0>n)$ should still be to analyse what happens for not too large $x$.

The transition operator will in general not be (quasi-)compact on the usual space $C(\mathbb R_0^+)$, but, as shown on the next pages, a modified functional analytic approach, in which one basically works on the continuous functions on some interval and keeps under control what happens outside, is possible. In contrast, the ``coming down from infinity'' which we just described  has not the uniform character that would be needed to apply the results in \cite{CV16}.

Here is the main result of this section:
\begin{theorem}
\label{thm.analytic}
Assume that the distribution of innovations has a density $\varphi(x)$ which is positive a.e. on $\mathbb R$.
Assume also that $\mathbb{E}(\xi_1^+)^t<\infty$ for all $t>0$ and $\mathbb{E}(\xi_1^-)^\delta<\infty$ for some
$\delta>0$. Then there exist $\gamma>0$ 
and a positive function $V$ such that
\begin{equation}
\label{thm.an.1}
\mathbb{P}_x(T_0=n)=e^{-\lambda_a (n+1)}
V(x)
+O\left(e^{-(\lambda_a+\gamma)n}\right).
\end{equation}
The function $V$ is $e^{\lambda_a}$-harmonic for the transition kernel $P_+$, that is,
$$
V(x)=e^{\lambda_a}\int_0^\infty P_+(x,dy)V(y)=e^{\lambda_a}\mathbb{E}[V(X_1);T_0>1],\quad x\ge0.
$$
\end{theorem}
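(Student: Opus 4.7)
The strategy is a renewal decomposition of the probability generating function $G(x,s):=\mathbb{E}_x[s^{T_0}]$, followed by the analytic Fredholm alternative and a Perron-Frobenius argument to obtain a simple leading pole, and then extraction of the $n$-th Taylor coefficient. Fix a level $M>0$ and set $\tau := \min\{k \geq 1 : X_k \leq M\}$; since $M>0$, always $\tau\leq T_0$, with equality iff $X_\tau\leq 0$. Splitting along $\{X_\tau\leq 0\}$ versus $\{X_\tau\in(0,M]\}$ and using the strong Markov property gives the renewal equation
\[
G(x,s) = h_s(x) + \int_0^M G(y,s)\,K_s(x,dy), \qquad x>0,
\]
with $h_s(x):=\mathbb{E}_x[s^\tau;X_\tau\leq 0]$ and $K_s(x,dy):=\mathbb{E}_x[s^\tau;X_\tau\in dy]$ for $y\in(0,M]$. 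Restricted to $x\in(0,M]$ this is the operator equation $(I-K_s)G_s = h_s$; values on $(M,\infty)$ are recovered by plugging $G_s|_{(0,M]}$ back into the same formula.

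\textbf{Compactness, analyticity, and meromorphic extension.} The key analytic input is condition \eqref{e:quasicomp}: for $M$ large, $\sup_{x\in[0,M]}\mathbb{E}_x[e^{\tilde\lambda\tau}]<\infty$ for some $\tilde\lambda>\lambda_a$. Under the polynomial-moment hypothesis $\mathbb{E}(\xi_1^+)^t<\infty$ for all $t$, this can be established by a Foster-Lyapunov argument with test function $V(x)=x^p$: $\mathbb{E}[V(X_1)\mid X_0=x]/V(x)\to a^p$ as $x\to\infty$, so choosing $p$ large enough that $a^{-p}>e^{\lambda_a}$ yields geometric decay of $\mathbb{P}_x(\tau>n)$ and hence the required exponential moment. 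Granted \eqref{e:quasicomp}, $s\mapsto K_s$ is an analytic operator-valued family on $\{|s|<e^{\tilde\lambda}\}$ acting on $C([0,M])$; compactness there follows from Arzel\`a-Ascoli, since its Lebesgue density $k_s(x,y)$, a convergent sum of convolution integrals of $\varphi$, is jointly continuous in $(x,y)$. Because $I-K_s$ is invertible on $\{|s|<e^{\lambda_a}\}$ (there the renewal series converges), the analytic Fredholm alternative extends $(I-K_s)^{-1}$ meromorphically to $\{|s|<e^{\tilde\lambda}\}$, with poles precisely at those $s$ for which $1\in\sigma(K_s)$.

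\textbf{Leading pole, asymptotics, and harmonicity.} Pringsheim's theorem places a singularity of $G(x,\cdot)$ (a power series with nonnegative coefficients) on the positive real axis; by Theorem~\ref{thm.log} this radius equals $s_0:=e^{\lambda_a}$. Applying Theorem A (Perron-Frobenius) to the compact positive operator $K_{s_0}$, with irreducibility and aperiodicity supplied by $\varphi>0$ a.e., shows that $1$ is a simple, isolated, strictly dominant eigenvalue of $K_{s_0}$, so $s_0$ is a simple pole of $(I-K_s)^{-1}$ and no other singularity of $G(x,\cdot)$ lies on the critical circle $|s|=s_0$. Hence
\[
G(x,s) = \frac{V(x)}{s_0-s} + H(x,s), \qquad |s|<R'',
\]
for some $R''>s_0$ with $H(x,\cdot)$ analytic and $V|_{(0,M]}$ a positive multiple of the Perron eigenfunction of $K_{s_0}$. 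Expanding $(s_0-s)^{-1}=s_0^{-1}\sum_n(s/s_0)^n$ and applying Cauchy's estimates to $H$ yields
\[
\mathbb{P}_x(T_0=n) = V(x)e^{-\lambda_a(n+1)} + O\bigl(e^{-(\lambda_a+\gamma)n}\bigr)
\]
for any $\gamma<\log(R''/s_0)$, which is \eqref{thm.an.1}. The $e^{\lambda_a}$-harmonicity of $V$ follows by taking residues at $s=s_0$ in the one-step identity $G(x,s)=s\mathbb{P}_x(T_0=1)+s\int_0^\infty G(y,s)P_+(x,dy)$, which forces $V(x)=e^{\lambda_a}\int_0^\infty V(y)P_+(x,dy)$.

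\textbf{Main obstacles.} The delicate steps are (i) establishing \eqref{e:quasicomp} with a strict margin $\tilde\lambda>\lambda_a$ starting from only polynomial moments of $\xi_1^+$, and (ii) excluding additional singularities of $G(x,\cdot)$ on $|s|=s_0$; both lean on the strict positivity of $\varphi$ via an irreducibility/aperiodicity argument. Once these are secured, the analytic Fredholm theorem and the coefficient extraction are essentially formal.
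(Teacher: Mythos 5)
Your overall route coincides with the paper's: a renewal identity for $\mathbb{E}_x[s^{T_0}]$ on a compact interval, compactness of the kernel on continuous functions, the analytic Fredholm alternative, a Perron--Frobenius argument at the dominant real singularity located via Pringsheim and Theorem~\ref{thm.log}, and coefficient extraction. Your two real deviations are reasonable: you decompose at the first passage $\tau=T_M$ below a level $M$ (the paper instead decomposes the excursion via $\sigma_r$ and $T_r$, putting the one-step part into $F_\lambda$), and you propose a Foster--Lyapunov argument with $x^p$ for the exponential moment \eqref{e:quasicomp}, where the paper's Lemma~\ref{prop:all.mom} obtains the same polynomial-in-$x$ bound by an aggregated-chain argument; both substitutions could be made to work. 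One technical claim needs repair: since $\varphi$ is only assumed measurable and a.e.\ positive, the kernel density of $K_s$ is in general \emph{not} jointly continuous, so compactness cannot be read off from continuity of the kernel; equicontinuity of $\{K_sf:\|f\|_\infty\le1\}$ must instead be extracted from $L^1$-continuity of translations, as in \eqref{gleichgr13} and Lemma~\ref{lem:K_lambda}.

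The genuine gap is at the spectral step. Theorem A applied to $K_{s_0}$ only says that the spectral radius $r(K_{s_0})$ is a simple dominant eigenvalue; it does not say that this spectral radius equals $1$, and it says nothing about the operators $K_{s_0e^{i\psi}}$ with $\psi\neq0$, which are the ones governing possible poles elsewhere on the circle $|s|=s_0$. Two separate arguments are missing. First, you must show $r(K_{s_0})=1$: the paper does this by proving $r(K_\lambda)\le1$ for real arguments below the critical one -- iterating the renewal equation and using $F_\lambda\ge1$ gives $L_\lambda\ge K_\lambda^n\mathbf 1$, which is incompatible with $r(K_\lambda)>1$ -- and then invoking continuity of the spectral radius; without this, the known fact $1\in\sigma(K_{s_0})$ could concern a non-dominant eigenvalue, and neither simplicity of the pole at $s_0$ nor its dominance follows. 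Second, a pole at $s_0e^{i\psi}$ means $K_{s_0e^{i\psi}}g=g$ for some $g\neq0$; the paper excludes this by showing $|g|\le K_{\lambda_a}|g|$ with strict inequality (positivity of $\varphi$ forces a non-constant phase), and then contradicting $r(K_{\lambda_a})=1$ via a min--max (Collatz--Wielandt type) principle from Marek, as in Proposition~\ref{prop:gf}; your reference to ``irreducibility and aperiodicity'' names the issue but supplies no argument. Finally, the residue computation you use for the $e^{\lambda_a}$-harmonicity of $V$ requires interchanging the limit $s\uparrow s_0$ with the integral $\int_0^\infty P_+(x,dy)(s_0-s)G(y,s)$; this needs a dominating bound of the type $V(y)\le C\,\mathbb{E}_y[e^{\lambda_a T_r}]$ together with the integrability statement \eqref{thm.an.5}, i.e.\ precisely the estimates \eqref{thm.an.2}--\eqref{thm.an.9} of the paper, and as stated it is not justified.
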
 

Again, this result describes not only the exact asymptotic behaviour of $\mathbb{P}_x(T_0=n)$ but states also
that the remainder term decays exponentially faster than the main term. It is
worth mentioning that the existence of all power moments required in Theorem~\ref{thm.analytic} is the 
minimal moment condition. More precisely, we shall show in Proposition~\ref{prop:reg.tails} that if the
tail of $\xi_1$ is regularly varying then it may happen that $e^{\lambda_a n}\mathbb{P}(T_0>n)\to0$.


The starting point of the approach, which we are going to use in this
section, is based on the following renewal-type decomposition for the moment
generating function of $T_0$. 
First define
$$
\sigma_r:=\inf\{n\ge1: X_n> r\},\quad r>0.
$$
Fix $\lambda<\lambda_a$.
Then, for $x\le r$ we have
\begin{align*}
\mathbb{E}_x\left[e^{\lambda T_0}\right]
&=\mathbb{E}_x\left[e^{\lambda T_0}; T_0<\sigma_r\right]
+\mathbb{E}_x\left[e^{\lambda T_0}; T_0>\sigma_r\right]\\
&=\mathbb{E}_x\left[e^{\lambda T_0}; T_0<\sigma_r\right]
+\mathbb{E}_x\left[e^{\lambda \sigma_r}\mathbf{1}_{\{T_0>\sigma_r\}}
\mathbb{E}_{X_{\sigma_r}}\left[e^{\lambda T_0}\right]\right]\\
&=\mathbb{E}_x\left[e^{\lambda T_0}; T_0<\sigma_r\right]
+\mathbb{E}_x\left[e^{\lambda \sigma_r}\mathbf{1}_{\{T_0>\sigma_r\}}
\mathbb{E}_{X_{\sigma_r}}\left[e^{\lambda T_0};T_r=T_0\right]\right]\\
&\hspace{1cm}+\mathbb{E}_x\left[e^{\lambda \sigma_r}\mathbf{1}_{\{T_0>\sigma_r\}}
\mathbb{E}_{X_{\sigma_r}}\left[e^{\lambda T_0};T_r<T_0\right]\right].
\end{align*}
Using now the Markov property at time $T_r$, we obtain the equation
\begin{equation}
\label{main_decomp}
\mathbb{E}_x\left[e^{\lambda T_0}\right]=F_\lambda(x)+
\int_0^r K_\lambda(x,dy)\mathbb{E}_y\left[e^{\lambda T_0}\right],
\end{equation}
where
\begin{equation}
\label{def_F}
F_\lambda(x)=\mathbb{E}_x\left[e^{\lambda T_0}; T_0<\sigma_r\right]
+\mathbb{E}_x\left[e^{\lambda \sigma_r}\mathbf{1}_{\{T_0>\sigma_r\}}
\mathbb{E}_{X_{\sigma_r}}\left[e^{\lambda T_0};T_r=T_0\right]\right]
\end{equation}
and
\begin{equation}
\label{def_K}
K_\lambda(x,dy)=
\mathbb{E}_x\left[e^{\lambda \sigma_r}\mathbf{1}_{\{T_0>\sigma_r\}}
\mathbb{E}_{X_{\sigma_r}}\left[e^{\lambda T_r}\mathbf{1}_{\{T_r<T_0\}}\mathbf 1_{\text dy}(X_{T_r})\right]\right]\,.
\end{equation}

To analyse the renewal equation \eqref{main_decomp}, we first have to derive some
properties of the functions $F_\lambda$ and the operators $K_\lambda$. More precisely,
we first show that there exists $r>0$ such that $F_\lambda(x)$ and $K_\lambda(x,dy)$
can be extended analytically for $\Re\lambda<\lambda_a+\varepsilon$. 

\subsection{Estimates for stopping times $T_0\wedge\sigma_r$ and $T_r$.}

The main purpose of this paragraph is to show that, under the conditions of
Theorem~\ref{thm.analytic}, $T_0\wedge\sigma_r$ and $T_r$ have lighter tails than $T_0$. 
This fact will play a crucial role in the study of properties of $F_\lambda$ and 
$K_\lambda$.

\begin{lemma}
\label{lem:two-sided}
Assume that $\mathbb{E}|\xi_1|^\delta$ is finite for some $\delta>0$. Then
for every $r$ such that $\pi[r,\infty)>0$ there exists $\varepsilon_r>0$ such that
\begin{equation}
\label{two-sided.1}
\sup_{x\in(0,r)}\mathbb{P}_x(T_0\wedge\sigma_r>n)\le C_re^{-(\lambda_a+\varepsilon_r)n},
\quad n\ge0.
\end{equation}
\end{lemma}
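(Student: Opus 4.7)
My plan is to combine a subadditivity argument with a strict one-step comparison between the restricted and unrestricted survival probabilities, leveraging the FKG-type inequality already present in the proof of Theorem~\ref{thm.log}.

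First, by the Markov property, since $X_n\in(0,r]$ on $\{T_0\wedge\sigma_r>n\}$, the function $\psi(n):=\sup_{x\in(0,r]}\mathbb P_x(T_0\wedge\sigma_r>n)$ is submultiplicative: $\psi(n+m)\le\psi(n)\psi(m)$. Thus $\rho_r:=\lim_n\psi(n)^{1/n}$ exists. Combining $\{T_0\wedge\sigma_r>n\}\subset\{T_0>n\}$ with the monotonicity \eqref{eq.3} and Theorem~\ref{thm.log} (applicable since $\pi[r,\infty)>0$) yields $\rho_r\le e^{-\lambda_a}$.

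For the strict improvement $\rho_r<e^{-\lambda_a}$, I would use \eqref{eq.5} in the form $\mathbb P_x(X_m>r\mid T_0>m)\ge\mathbb P_x(X_m>r)$. Writing $X_m=a^m x+Y_m$ with $Y_m\xrightarrow{d} X_\infty\sim\pi$, the convergence is uniform in $x\in(0,r]$ because $a^m x\to 0$ uniformly on this set. Since $\pi[r,\infty)>0$, I can fix $m_0$ large and $c>0$ such that $\mathbb P_x(X_{m_0}>r)\ge c$ uniformly in $x\in(0,r]$, yielding the one-step comparison
\begin{equation*}
\mathbb P_x(T_0\wedge\sigma_r>m_0)\le(1-c)\,\mathbb P_x(T_0>m_0)\le(1-c)\,\mathbb P_r(T_0>m_0).
\end{equation*}
Meanwhile, iterating the supermultiplicativity \eqref{eq.6a} together with $\mathbb P_r(X_n\ge r)\to\pi[r,\infty)>0$ supplies the matching upper bound $\mathbb P_r(T_0>n)\le K e^{-\lambda_a n}$ with some explicit $K>0$.

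Iterating the one-step comparison through $k$ consecutive blocks of length $m_0$ via the Markov property gives $\psi(km_0)\le[(1-c)\mathbb P_r(T_0>m_0)]^k$, hence $\rho_r\le[(1-c)\mathbb P_r(T_0>m_0)]^{1/m_0}$. The main obstacle lies precisely here: extracting a strict exponential rate gap requires $(1-c)\mathbb P_r(T_0>m_0)<e^{-\lambda_a m_0}$ for some fixed $m_0$, which the amplitude $K$ in the upper bound does not a priori guarantee. I would close this gap by observing that the uniform constant $c$ from the FKG step actually approaches the quasistationary mass above $r$ rather than the weaker $\pi((r,\infty))$ (by bootstrapping the conditioning in \eqref{eq.5}), thereby amplifying the effective gain per block enough to dominate the amplitude $K$; alternatively, one can construct a suitable super-eigenfunction for the restricted transition kernel $Q_r f(x):=\mathbb E_x[f(X_1);\,X_1\in(0,r]]$, comparing it to the corresponding object for $P_+$ via a Perron--Frobenius style argument localised on $(0,r]$. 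Once the strict rate $\rho_r<e^{-\lambda_a}$ is obtained, the claimed bound $\psi(n)\le C_r e^{-(\lambda_a+\varepsilon_r)n}$ follows by absorbing finite-scale corrections into $C_r$.
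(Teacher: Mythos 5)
Your setup (submultiplicativity of $\psi(n)=\sup_{x\in(0,r]}\mathbb P_x(T_0\wedge\sigma_r>n)$, the bound $\rho_r\le e^{-\lambda_a}$, and the FKG step from \eqref{eq.5}) is fine, but the proof has a genuine gap exactly where you admit it: the strict inequality $\rho_r<e^{-\lambda_a}$. Your one-step comparison only uses the event $\{X_{m_0}>r\}$ at the \emph{endpoint} of a block, which yields a fixed constant gain $(1-c)$ per block, while the best upper bound you can get at this stage of the paper is $\mathbb P_r(T_0>m_0)\le K e^{-\lambda_a m_0}$ with an amplitude $K\ge1$ coming from \eqref{eq.6a} (roughly $K=1/\inf_k\mathbb P_r(X_k\ge r)$), over which you have no control. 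You therefore need $(1-c)K<1$, and neither of your proposed repairs delivers it: replacing $c$ by the quasistationary mass above $r$ still gives only a constant per-block gain (and presupposes convergence to a QSD, not yet available), while building a super-eigenfunction for the kernel restricted to $(0,r]$ and showing its leading eigenvalue is $<e^{-\lambda_a}$ is essentially the assertion to be proved, by spectral machinery that this very lemma is meant to feed later in the paper.

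The missing idea in the paper's proof is to keep the \emph{whole-block} event $\{\max_{k\le n_0}X_k<r\}$ rather than just the endpoint event. Since $\mathbf1_{\{\max_{k\le n_0}X_k\ge r\}}$ and $\mathbf1_{\{T_0>n_0\}}$ are both increasing in the innovations, FKG gives
\begin{equation*}
\mathbb P_y\Bigl(\max_{k\le n_0}X_k<r,\ T_0>n_0\Bigr)\le \mathbb P_0\Bigl(\max_{k\le n_0}X_k<r\Bigr)\,\mathbb P_r(T_0>n_0),
\end{equation*}
and the first factor is itself exponentially small in $n_0$: the sequence $r-X_n$ is again an AR(1)-sequence (with innovations $(1-a)r-\xi_n$), so Theorem~\ref{thm.log} applies to it and yields $\mathbb P_0(\max_{k\le n_0}X_k<r)=e^{-\widetilde\lambda_a n_0+o(n_0)}$ with $\widetilde\lambda_a>0$ — this is precisely where the two-sided assumption $\mathbb E|\xi_1|^\delta<\infty$ (in particular $\mathbb E(\xi_1^-)^\delta<\infty$) is used. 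The per-block factor is then $e^{-(\lambda_a+\widetilde\lambda_a)n_0+o(n_0)}$, which swallows any fixed amplitude once $n_0$ is large; iterating over blocks via the Markov property gives \eqref{two-sided.1} with $\varepsilon_r$ of order $\widetilde\lambda_a/2$. Your block structure and use of \eqref{eq.3} and \eqref{eq.5} are compatible with this, but without exploiting the exponential cost of staying below $r$ over an entire block, the rate gap cannot be extracted.
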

\begin{proof}
Clearly,
$$
\mathbb{P}_x(T_0\wedge\sigma_r>n)=\mathbb{P}_x\left(\max_{k\le n}X_k<r, T_0>n\right),\quad n\ge1.
$$
Fix some $n_0\ge1$ and consider the sequence
$\mathbb{P}_x\left(\max_{k\le \ell n_0}X_k<r, T_0>\ell n_0\right)$ in $\ell\in\mathbb N$. By the Markov property,
\begin{align*}
&\mathbb{P}_x\left(\max_{k\le \ell n_0}X_k<r, T_0>\ell n_0\right)\\
&=\int_0^r\mathbb{P}_x\left(X_{(\ell-1)n_0}\in dy;\max_{k\le (\ell-1) n_0}X_k<r, T_0>(\ell-1) n_0\right) 
\mathbb{P}_y\left(\max_{k\le n_0}X_{k}<r,T_0>n_0\right).
\end{align*}
It is easy to see that functions $\mathbf{1}_ {\{\max_{k\le n_0}X_k\ge r\}}$ and
$\mathbf{1}_{\{T_0>n_0\}}$ are increasing functions in every innovation $\xi_k$,
$k\le n_0$. Thus, by the FKG-inequality for product spaces,
$$
\mathbb{P}_y\left(\max_{k\le n_0}X_{k}\ge r,T_0>n_0\right)\ge
\mathbb{P}_y\left(\max_{k\le n_0}X_{k}\ge r\right)\mathbb{P}_y\left(T_0>n_0\right).
$$
In other words,
\begin{align*}
\mathbb{P}_y\left(\max_{k\le n_0}X_{k}<r,T_0>n_0\right)
&\le \mathbb{P}_y\left(\max_{k\le n_0}X_{k}<r\right)\mathbb{P}_y\left(T_0>n_0\right)\\
&\le \mathbb{P}_0\left(\max_{k\le n_0}X_{k}<r\right)\mathbb{P}_r\left(T_0>n_0\right).
\end{align*}
Consequently,
\begin{align}
\label{two-sides.3}
\nonumber
&\mathbb{P}_x\left(\max_{k\le \ell n_0}X_k<r, T_0>\ell n_0\right)\\
\nonumber
&\hspace{0.5cm}\le\mathbb{P}_0\left(\max_{k\le n_0}X_k<r\right)\mathbb{P}_r(T_0>n_0)
\mathbb{P}_x\left(\max_{k\le (\ell-1) n_0}X_k<r, T_0>(\ell-1) n_0\right)\\
&\hspace{0.5cm}\le\ldots\le
\left(\mathbb{P}_0\left(\max_{k\le n_0}X_k<r\right)\mathbb{P}_r(T_0>n_0)\right)^\ell,
\quad x\in(0,r).
\end{align}
By Theorem~\ref{thm.log}, $\mathbb{P}_r(T_0>n_0)=e^{-\lambda_a n_0+o(n_0)}$ as $n_0\to\infty$. Since
$r-X_n$ is an AR($1$)-sequence, we may apply Theorem~\ref{thm.log} to this sequence:
$$
\mathbb{P}_0\left(\max_{k\le n_0}X_k<r\right)=e^{-\widetilde{\lambda}_an_0+o(n_0)},
\quad n_0\to\infty
$$
for some $\widetilde{\lambda}_a>0$. Therefore, there exists $n_0$ such that
$$
\frac{1}{n_0}\log\mathbb{P}_0\left(\max_{k\le n_0}X_k<r\right)\mathbb{P}_r(T_0>n_0)<-\lambda_a-\frac{\widetilde{\lambda}_a}2\,.
$$
Combining this estimate with \eqref{two-sides.3}, we obtain \eqref{two-sided.1}. 
\end{proof}

We next show that a similar estimate holds for $T_r$.
\begin{lemma}
\label{prop:all.mom}
Assume that $\mathbb{E}(\xi_1^+)^t<\infty$ for all $t>0$. Then for all $A\in(1,1/a)$
and all $\lambda>0$
there exists $r_0=r_0(A,\lambda)$ such that, for all $r\ge r_0$,
$$
\mathbb{E}\bigl[ e^{\lambda T_r}\bigr]\le 2e^\lambda\mathbb{E}\left(\frac{X_0}{r}\right)^{\lambda/\log A}
$$
for any distribution of $X_0$ with support in $(r,\infty)$.
\end{lemma}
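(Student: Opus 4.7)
The plan is to construct an explicit super-solution of the functional equation satisfied by $g(x) := \mathbb{E}_x[e^{\lambda T_r}]$ on $\{x > r\}$. Setting $\alpha := \lambda/\log A$, one has $e^\lambda = A^\alpha$ and $\rho := (aA)^\alpha \in (0,1)$ since $aA<1$. The strong Markov property applied at time $1$ yields
\[
g(x) = e^\lambda \mathbb{P}_x(X_1 \leq r) + e^\lambda \int_r^\infty P(x,dy)\,g(y),
\]
and $g$ is the minimal nonnegative solution of this equation. Consequently, once a nonnegative function $h$ has been exhibited that dominates the right-hand side with $h$ substituted for $g$, iteration of the renewal operator together with monotone convergence gives $g \leq h$ pointwise on the support of $X_0$. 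I will work with the two-parameter ansatz $h(x) := A_1(x/r)^\alpha + A_0$, with $A_0, A_1\geq 0$ of order $e^\lambda$ to be chosen.

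The crucial intermediate estimate I plan to establish is
\[
\mathbb{E}_x\bigl[(X_1)_+^\alpha\bigr] \leq (ax)^\alpha \bigl(1+\varepsilon(r)\bigr),
\]
uniformly for $x \geq r$, with $\varepsilon(r)\to 0$ as $r\to\infty$. This is where the hypothesis $\mathbb{E}(\xi_1^+)^t<\infty$ for every $t>0$ enters, via Minkowski's inequality applied to $X_1 = ax + \xi_1$ when $\alpha\geq 1$, or via the subadditivity of $t\mapsto t^\alpha$ when $\alpha\leq 1$. Substituting into the super-solution inequality and using $e^\lambda a^\alpha = \rho$, together with $(x/r)^\alpha\geq 1$ for $x>r$ and the trivial bound $\mathbb{P}_x(X_1\leq r)\leq 1$, the condition reduces to the elementary algebraic constraint
\[
A_1\bigl[1-\rho(1+\varepsilon(r))\bigr] + A_0 \geq e^\lambda.
\]
For $r$ sufficiently large, the slack afforded by $\varepsilon(r)\to 0$ allows me to pick $A_0, A_1$ satisfying this together with the normalization $A_1 + A_0 \leq 2e^\lambda$; this requirement determines the threshold $r_0(A,\lambda)$. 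One then gets
\[
g(x) \leq h(x) \leq (A_1+A_0)(x/r)^\alpha \leq 2e^\lambda (x/r)^\alpha
\]
for every $x>r$, and integrating against the distribution of $X_0$ supported in $(r,\infty)$ yields the claim of the lemma.

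The delicate point, and the main obstacle I anticipate, is the simultaneous satisfiability of the super-solution inequality and the normalization $A_1+A_0\leq 2e^\lambda$: as $\rho\to 1^-$ (which happens when $A$ approaches $1/a$), the feasible region for $(A_0,A_1)$ shrinks, and one must exploit both the strict inequality $\rho<1$ and the vanishing of the correction $\varepsilon(r)$ for large $r$ to keep this region nonempty. Everything else reduces to a routine super-solution iteration and Fubini.
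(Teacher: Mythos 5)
Your route (a one-step super-solution for the renewal equation of $g(x)=\mathbb{E}_x[e^{\lambda T_r}]$, with minimality and monotone convergence) is genuinely different from the paper's proof, which runs an aggregated chain on the geometric ladder $y_j=rA^j$ and regenerates at the first non-decreasing step. However, there is a real gap at the point you yourself flag as delicate. First, the reduction to the constraint $A_1[1-\rho(1+\varepsilon(r))]+A_0\ge e^\lambda$ is not a correct rendering of the super-solution inequality: under the operator, the constant part $A_0$ is not reproduced but multiplied by $e^\lambda\mathbb{P}_x(X_1>r)$, which is close to $e^\lambda>1$ for large $x$. Indeed, your two constraints are satisfied by $A_1=0$, $A_0=e^\lambda$, i.e.\ by a bounded candidate, while $g$ is unbounded in $x$, so the reduction cannot be right; writing the inequality correctly, the binding conditions are $A_1(x/r)^\alpha[1-\rho(1+\varepsilon(r))]\ge e^\lambda\mathbb{P}_x(X_1\le r)+(e^\lambda\mathbb{P}_x(X_1>r)-1)A_0$ for all $x>r$. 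Second, and more fundamentally, with the crude bounds you use ($\mathbb{P}_x(X_1\le r)\le1$ and dropping the indicator $\mathbf{1}_{\{X_1>r\}}$ via $\mathbb{E}_x[(X_1)_+^\alpha]\le(ax)^\alpha(1+\varepsilon(r))$), these conditions force $A_1$ to be at least of order $(e^\lambda-1)/(1-\rho)$. The factor $\rho=(aA)^{\lambda/\log A}$ does not depend on $r$ and tends to $1$ as $A\uparrow 1/a$, so for $A$ close to $1/a$ the required $A_1$ exceeds $2e^\lambda$ no matter how large $r$ is; the vanishing of $\varepsilon(r)$ removes only the correction, not this obstruction. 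Thus your argument yields at best $\mathbb{E}[e^{\lambda T_r}]\le C(a,A,\lambda)\,\mathbb{E}(X_0/r)^{\lambda/\log A}$ with $C\approx e^\lambda/(1-\rho)$, which blows up as $A\uparrow1/a$ — enough for the later applications in the paper, but strictly weaker than the lemma as stated.

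The reason the paper obtains the universal constant $2e^\lambda$ together with the small exponent $\lambda/\log A$ is that it decouples the two: the exponent comes from counting ladder levels, $Y_0\le\log(X_0/r)/\log A+1$, along which the chain typically descends one level per step, while the constant $2$ comes from a Chebyshev bound of arbitrarily high moment order $t$ (chosen so that $A^t\ge2e^\lambda$) on the probability of ever moving up a level, and that probability does become small as $r\to\infty$. If you want to stay within a one-step Lyapunov framework, you would at least have to exploit the killing at level $r$: on $\{X_1\le r\}$ the cost is $e^\lambda$ rather than $e^\lambda h(X_1)$, so the candidate $h(x)=2e^\lambda(x/r)^\alpha$ gains a term of order $(2e^\lambda(ax/r)^\alpha-1)\mathbb{P}_x(X_1\le r)$ which must be kept, not discarded; verifying the resulting inequality uniformly in $x$ (in particular through the transition layer $x\approx r/a$) is a considerably more delicate, regime-by-regime computation than the reduction you propose, and the paper's ladder argument avoids it altogether.
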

\begin{proof}
Define
$$
y_j=rA^j,\quad j\ge0.
$$
Let us consider an 'aggregated' chain $(Y_n)_{n\ge 0}$ defined by the transition kernel
$$
\mathbb{P}(Y_1=j|Y_0=k)=\mathbb{P}_{y_k}(X_1\in(y_{j-1},y_j]),\ k,j\ge1 
$$
and
$$
\mathbb{P}(Y_1=0|Y_0=0)=1,\quad
\mathbb{P}(Y_1=0|Y_0=k)=\mathbb{P}_{y_k}(X_1\le y_0)\,,\, k\ge1.
$$
Similarly we define the initial distribution:
$$
\mathbb{P}(Y_0=j)=\mathbb{P}(X_0\in(y_{j-1},y_j])\,,\,j\ge1\,.
$$
Define also the stopping times
$$
\tau:=\inf\{n: Y_n=0\}
\quad\text{and}\quad
\theta:=\inf\{n:Y_n\ge Y_{n-1}\}.
$$
We first estimate the distribution of $Y_\theta$. Noting that $Y_\theta=j\ge1$ implies that
$Y_{\theta-1}\le j$, we have 
$$
\max_{k\ge1}\mathbb{P}(Y_\theta=j|Y_0=k)=\max_{k\le j}\mathbb{P}(Y_\theta=j|Y_0=k).
$$
Furthermore, using the fact that $\theta\le j$ for $Y_0\le j$ and $Y_\theta=j$, we
infer that
$$
\max_{k\ge1}\mathbb{P}(Y_\theta=j|Y_0=k)\le j\max_{k\le j}\mathbb{P}(Y_1=j|Y_0=k).
$$
By the definition of $Y_n$,
\begin{align*}
\mathbb{P}(Y_1=j|Y_0=k)
&=\mathbb{P}_{y_k}(X_1\in(y_{j-1},y_j])\\
&\le \mathbb{P}_{y_k}(X_1>y_{j-1})\le\mathbb{P}(\xi_1>(1-aA)u_{j-1}),\quad k\le j.
\end{align*}
As a result,
\begin{equation}
\label{all.mom.1}
\max_{k\ge1}\mathbb{P}(Y_\theta=j|Y_0=k)\le j\mathbb{P}(\xi_1>(1-aA)u_{j-1})=:q_j(r),
\quad j\ge1. 
\end{equation}
Set $q_0(r):=1-\sum_{j=1}^\infty q_j(r)$.
It is easy to see that $\lim_{r\to\infty}\sum_{j=1}^\infty q_j(r)=0$.
Therefore, $\{q_j(r)\}$ is a probability distribution for all $r$ large enough.

Noting that the chain $(Y_n)$ is stochastically monotone  and using \eqref{all.mom.1}, 
we have, for arbitrary initial $Y_0$,
$$
\tau_0\le Y_0+\mathbf{1}_{\lbrace Y_\theta \ge 1\rbrace}\tau^{(q)}_0\quad\text{in distribution},
$$
where $\tau^{(q)}_0$ is independent of $Y_0,\ldots,Y_\theta$ and is distributed
as $\tau_0$ corresponding to the initial distribution $q_j(r)/(1-q_0(r)),\ j\ge1$.
Combining  this inequality with the bound $\mathbb{P}(Y_\theta\ge1)\le 1-q_0(r)$,
we obtain
\begin{equation}
\label{all.mom.2}
\mathbb{E} e^{\lambda\tau_0}
\le\mathbb{E}e^{\lambda Y_0}\left(1+(1-q_0(r))\mathbb{E}e^{\lambda\tau^{(q)}_0}\right).
\end{equation}

If $Y_0$ is distributed according to $q_j(r)/(1-q_0(r))$, then we conclude from \eqref{all.mom.2} that
\begin{equation}
\label{all.mom.3}
\mathbb{E}e^{\lambda\tau^{(q)}_0}
\le \frac{\mathbb{E}e^{\lambda Y_0}}{1-(1-q_0(r))\mathbb{E}e^{\lambda Y_0}}.
\end{equation}
It follows from the Chebyshev inequality that
$$
q_j(r)\le j\frac{\mathbb{E}(\xi_1^+)^t}{(1-aA)^tu_{j-1}^t}
=\frac{\mathbb{E}(\xi_1^+)^tA^t}{(1-aA)^tr^t}jA^{-jt},\quad j\ge1.
$$
Consequently,
\begin{align*}
(1-q_0(r))\mathbb{E}e^{\lambda Y_0}&=\sum_{j=1}^\infty q_j(r)e^{\lambda j}\\
&\le \frac{\mathbb{E}(\xi_1^+)^te^\lambda}{(1-aA)^tr^t}\sum_{j=1}^\infty j(e^\lambda A^{-t})^{j-1}
=\frac{\mathbb{E}(\xi_1^+)^te^\lambda}{(1-aA)^tr^t}\left(1-e^\lambda A^{-t}\right)^{-2}.
\end{align*}
For all $t$ such that $A^t\ge 2e^{\lambda}$ we have
$$
(1-q_0(r))\mathbb{E}e^{\lambda Y_0}\le 4\frac{\mathbb{E}(\xi_1^+)^te^\lambda}{(1-aA)^tr^t}.
$$
As a result,
$$
(1-q_0(r))\mathbb{E}e^{\lambda Y_0}\le\frac{1}{2}
$$
for all $r$ large enough. Combining this estimate with \eqref{all.mom.3}, we obtain
$$
(1-q_0(r))\mathbb{E}e^{\lambda \tau^{(q)}_0}\le 1.
$$
Plugging this into \eqref{all.mom.2} leads to
$$
\mathbb{E} e^{\lambda\tau_0}\le2\mathbb{E}e^{\lambda Y_0}.
$$
The stocahstic monotonicity of $X_n$ implies that $T_r\le\tau_0$ in distribution. Combining this with
the fact that $Y_0\le \frac{\log(X_0/r)}{\log A}+1$, we obtain the desired inequality for the chain $X_n$.
\end{proof}
\begin{corollary}
\label{cor:all.mom}
Under the assumptions of Lemma~\ref{prop:all.mom}, there exist $r>0$ and $\varepsilon>0$ such that, for all $\lambda\le\lambda_a+2\varepsilon$,
$$
\sup_{x\in[0,r]}
\mathbb{E}_x\left[e^{\lambda\sigma_r}\mathbf{1}\{T_0>\sigma_r\}\mathbb{E}_{X_{\sigma_r}}[e^{\lambda T_r}]\right]
<\infty\,.
$$
\end{corollary}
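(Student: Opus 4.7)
The plan is to apply Lemma~\ref{prop:all.mom} to bound the inner expectation $\mathbb{E}_{X_{\sigma_r}}[e^{\lambda T_r}]$ by a polynomial in $X_{\sigma_r}/r$, and then to control the resulting sum in $n$ using the uniform exponential tail estimate of Lemma~\ref{lem:two-sided}.

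First I would fix $A \in (1, 1/a)$ and choose $r$ large enough so that $\pi[r, \infty) > 0$ (so that Lemma~\ref{lem:two-sided} applies and yields some $\varepsilon_r > 0$), and also so that $r \ge r_0(A, \lambda_a + 2\varepsilon)$ (so that Lemma~\ref{prop:all.mom} applies), where $\varepsilon > 0$ with $2\varepsilon < \varepsilon_r$ is fixed along the way. Lemma~\ref{prop:all.mom} applied to the Dirac mass at $y > r$ then gives the pointwise bound
$$
\mathbb{E}_y\bigl[e^{\lambda T_r}\bigr] \le 2 e^\lambda (y/r)^{\lambda/\log A}, \qquad y > r,
$$
valid for all $\lambda \le \lambda_a + 2\varepsilon$. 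Plugging this in with $y = X_{\sigma_r}$ and decomposing on the value of $\sigma_r$, it suffices to bound
$$
\sum_{n=1}^\infty e^{\lambda n}\, \mathbb{E}_x\bigl[X_n^{\lambda/\log A}\, \mathbf{1}\{\sigma_r = n,\, T_0 > n\}\bigr]
$$
uniformly in $x \in [0, r]$.

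Next, I would exploit the fact that on $\{\sigma_r = n,\, T_0 > n\}$ the previous value $X_{n-1}$ lies in $(0, r]$, so that $X_n = aX_{n-1} + \xi_n \le r + \xi_n^+$. Conditioning on $\mathcal{F}_{n-1}$ and using independence of $\xi_n$ from the past gives
$$
\mathbb{E}_x\bigl[X_n^{\lambda/\log A}\, \mathbf{1}\{\sigma_r = n,\, T_0 > n\}\bigr] \le \mathbb{E}\bigl[(r + \xi_1^+)^{\lambda/\log A}\bigr]\, \mathbb{P}_x(\sigma_r \wedge T_0 \ge n),
$$
where the first factor is finite by the all-power-moment hypothesis on $\xi_1^+$. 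Combined with the uniform bound $\mathbb{P}_x(\sigma_r \wedge T_0 \ge n) \le C_r e^{-(\lambda_a + \varepsilon_r)(n-1)}$ from Lemma~\ref{lem:two-sided}, the series is dominated by a geometric one for every $\lambda \le \lambda_a + 2\varepsilon$, which gives the desired uniform finiteness on $[0,r]$.

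The main obstacle is matching the polynomial factor $(X_{\sigma_r}/r)^{\lambda/\log A}$ coming out of Lemma~\ref{prop:all.mom} with the exponential tail estimate of Lemma~\ref{lem:two-sided}, which is formulated for probabilities rather than weighted expectations. The key observation that makes this go through is that the overshoot $X_{\sigma_r}$ is bounded by $r + \xi_{\sigma_r}^+$, where $\xi_{\sigma_r}$ is an innovation independent of $\mathcal{F}_{\sigma_r - 1}$; after integrating out $\xi_{\sigma_r}$ the polynomial weight becomes a finite constant and cleanly decouples from the indicator $\mathbf{1}\{\sigma_r \wedge T_0 \ge n\}$, after which the exponential decay from Lemma~\ref{lem:two-sided} finishes the job.
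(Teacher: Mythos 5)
Your proof is correct and is essentially the paper's own argument: decompose on the value of $\sigma_r$, bound the inner expectation by the polynomial estimate of Lemma~\ref{prop:all.mom}, decouple the overshoot---which is produced by a single innovation and is controlled by the power moments of $\xi_1^+$---from the event that the chain has stayed in $(0,r]$, and sum geometrically using Lemma~\ref{lem:two-sided}; the paper phrases the decoupling via the tail bound \eqref{revis.2} rather than your almost sure bound $X_{\sigma_r}\le r+\xi_{\sigma_r}^+$, but this is the same observation. One cosmetic point: to remove the apparent circularity in your choice of constants (your $\varepsilon$ is taken smaller than $\varepsilon_r/2$, hence depends on $r$, while $r\ge r_0(A,\lambda_a+2\varepsilon)$ depends on $\varepsilon$), invoke Lemma~\ref{prop:all.mom} at one fixed $\lambda^*>\lambda_a$, e.g.\ $\lambda^*=\lambda_a+1$, and use monotonicity of $\lambda\mapsto\mathbb{E}_y\bigl[e^{\lambda T_r}\bigr]$ to cover all $\lambda\le\lambda_a+2\varepsilon$ once $\varepsilon\le 1/2$ is fixed afterwards.
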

\begin{proof}
By the Markov property at time $\sigma_r$ and by Lemma~\ref{prop:all.mom},
\begin{align}
\label{revis.1}
\nonumber
&\mathbb{E}_x\left[e^{\lambda\sigma_r}\mathbf{1}\{T_0>\sigma_r\}\mathbb{E}_{X_{\sigma_r}}[e^{\lambda T_r}]\right]\\
\nonumber
&\hspace{1cm}=\sum_{k=1}^\infty e^{\lambda k}\int_r^\infty\mathbb{P}_x(T_o>\sigma_r=k,X_{\sigma_r}\in dy)\mathbb{E}_y[e^{\lambda T_r}]\\
&\hspace{1cm}\le\frac{2e^\lambda}{r^{\lambda/\log A}}\sum_{k=1}^\infty e^{\lambda k}\int_r^\infty\mathbb{P}_x(T_o>\sigma_r=k,X_{\sigma_r}\in dy)y^{\lambda/\log A}.
\end{align}
Noting that 
\begin{align}
\label{revis.2}
\mathbb{P}_x(T_0>\sigma_r=k,X_{\sigma_r}>z)\le \mathbb{P}_x(T_0>\sigma_r>k-1)\mathbb{P}(\xi_1>(1-a)z),\quad z>r,
\end{align}
we obtain
\begin{align*}
\mathbb{E}_x\left[e^{\lambda\sigma_r}\mathbf{1}\{T_0>\sigma_r\}\mathbb{E}_{X_{\sigma_r}}[e^{\lambda T_r}]\right]
\le C(\lambda,A,r)\sum_{k=1}^\infty e^{\lambda k} \mathbb{P}_x(T_0>\sigma_r>k-1).
\end{align*}
The desired bound follows now from Lemma~\ref{lem:two-sided}.
\end{proof}
\subsection{Properties of $F_\lambda$ and $K_\lambda$.}
We start this subsection by stating  properties of the function $F_\lambda$ that are important for our approach.
\begin{lemma}\label{l:pos-function}
For every complex $z$ with real part $\Re (z)\le \lambda_a+\varepsilon$ the function $F_{z}$ defines a continuous
function on $[0,r]$, which is strictly positive if additionally $z\in \mathbb{R}$. 
\end{lemma}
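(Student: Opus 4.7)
The plan is to read all three claims---finiteness, continuity in $x$, and strict positivity for real $z$---directly from two stopping-time estimates already in hand: Lemma~\ref{lem:two-sided}, controlling $T_0 \wedge \sigma_r$, and Corollary~\ref{cor:all.mom}, controlling the excursion above $r$ followed by the return to $[0,r]$. First I fix $r$ large enough and $\varepsilon > 0$ small enough that both estimates apply at exponential rate $\lambda_a + \varepsilon$, i.e.\ $\varepsilon < \varepsilon_r$ in Lemma~\ref{lem:two-sided} and $\varepsilon$ fits inside the $2\varepsilon$-slack produced by Corollary~\ref{cor:all.mom}. Throughout I work on the closed strip $\Re(z) \le \lambda_a + \varepsilon$.

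The first step is uniform absolute convergence on $[0,r]$. The first summand in \eqref{def_F} is dominated in modulus by $e^{\Re(z)(T_0 \wedge \sigma_r)}$, whose expectation is uniformly bounded on $x \in [0,r]$ by Lemma~\ref{lem:two-sided}. For the second summand I observe that on $\{T_r = T_0\}$ the equality $T_0 = T_r$ holds, so
$$
\bigl|\mathbb{E}_{X_{\sigma_r}}[e^{zT_0}; T_r = T_0]\bigr| \le \mathbb{E}_{X_{\sigma_r}}[e^{\Re(z) T_r}],
$$
and the full second summand is then dominated by exactly the quantity that Corollary~\ref{cor:all.mom} controls uniformly in $x \in [0,r]$.

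For continuity in $x$, I expand each summand as a series $\sum_{k=1}^\infty e^{zk} c_k(x)$, where each $c_k(x)$ is a probability obtained by integrating a product of translates $\varphi(\cdot - a\,\cdot)$ over a finite-dimensional region. The starting point $x$ enters only through a translation of $\varphi$, and since translations are continuous in $L^1(\mathbb{R})$, each $c_k(x)$ is continuous in $x$; the uniform tail bounds from the first step then upgrade this to uniform convergence of the series on $[0,r]$, yielding continuity of $F_z$. Strict positivity in the real case is the easy endpoint: all summands of $F_\lambda(x)$ are nonnegative for $\lambda \in \mathbb{R}$, and the event $\{T_0 = 1\}$ is contained in $\{T_0 < \sigma_r\}$ because $X_1 \le 0$ forces $\sigma_r > 1$, so
$$
F_\lambda(x) \ge e^\lambda\,\mathbb{P}(\xi_1 \le -ax) > 0,
$$
the strict inequality following from $\varphi > 0$ a.e.

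The main technical subtlety sits in the continuity step: one has to propagate $x$-continuity of each coefficient $c_k(x)$ through an exponentially weighted infinite series, which is precisely what the uniformity in Lemma~\ref{lem:two-sided} and Corollary~\ref{cor:all.mom} is designed to afford. All other steps amount to bookkeeping with these two stopping-time estimates.
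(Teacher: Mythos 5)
Your proposal is correct and follows essentially the same route as the paper: finiteness of $F_z$ uniformly on $[0,r]$ from Lemma~\ref{lem:two-sided} and Corollary~\ref{cor:all.mom}, continuity by truncating the exponential series, using absolute continuity of the innovations (via $L^1$-continuity of translates of $\varphi$) for the finitely many terms, and killing the tail uniformly, and strict positivity from an explicit positive-probability event. The only differences are cosmetic and if anything slightly cleaner: you control the tail of the second summand by grouping according to the total time $\sigma_r+T_r$ and exploiting the exponential slack, where the paper splits into the two tails \eqref{pos.f.1} and \eqref{pos.f.2} (the latter via Cauchy--Schwarz and uniform integrability), and your lower bound $F_\lambda(x)\ge e^{\lambda}\mathbb{P}(\xi_1\le -ax)>0$ replaces the paper's bound $F_\lambda\ge1$.
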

\begin{proof}
We first show that $F_z$ is well-defined for all $z$ with $\Re (z)<\lambda_a+\varepsilon$. Indeed, it follows
from Lemma~\ref{lem:two-sided} that
\begin{equation*}
\Big|\mathbb{E}_x\left[e^{z T_0};T_0<\sigma_r\right]\Big|
\le \mathbb{E}_x\left[e^{\Re (z) T_0\wedge\sigma_r}\right]\le C 
\end{equation*}
uniformly in $x\in[0,r]$ and in $z$ with $\Re (z)<\lambda_a+\varepsilon$ for every $\varepsilon<\varepsilon_r$.
Applying  Corollary~\ref{cor:all.mom}, we also conclude that
\begin{align*}
&\Big|\mathbb{E}_x\left[e^{z \sigma_r}\mathbf{1}_{\{T_0>\sigma_r\}}
\mathbb{E}_{X_{\sigma_r}}\left[e^{z T_0};T_r=T_0\right]\right]\Big|\\
&\hspace{2cm}\le \mathbb{E}_x\left[e^{\Re (z) \sigma_r}\mathbf{1}_{\{T_0>\sigma_r\}}
\mathbb{E}_{X_{\sigma_r}}\left[e^{\Re (z) T_r}\right]\right]\le C
\end{align*}
uniformly in $x\in[0,r]$ and in $z$ with $\Re (z)<\lambda_a+\varepsilon$. Therefore, $F_z(x)$ is bounded
for all $x\in[0,r]$ and all $z$ with $\Re (z)<\lambda_a+\varepsilon$.

It is also clear that, for all $x\in[0,r]$,
$$
F_\lambda(x)\ge \mathbb{E}_x\left[ e^{\lambda(T_0\wedge\sigma_r)}\right]\ge 1,
\quad \lambda\in[0,\lambda_a+\varepsilon).
$$
Thus, it remains to show the continuity of $F_z$. Fix some $N\ge1$. Since the innovations
have an absolutely continuous distribution, the probabilities $\mathbb{P}_x(\sigma_r>T_0=k)$
are continuous in $x$. As a result, $\sum_{k=1}^N e^{zk}\mathbb{P}_x(\sigma_r>T_0=k)$ is continuous in $x$.
Noting that, according to Lemma~\ref{lem:two-sided},
\begin{align*}
\max_{x\in[0,r]}\Big|\mathbb{E}_x\left[e^{z T_0}\mathbf{1}_{\{\sigma_r>T_0>N\}}\right]\Big|\to0
\quad\text{as }N\to\infty,
\end{align*}
we infer that $\mathbb{E}_x\left[e^{z T_0};T_0<\sigma_r\right]$ is continuous on $[0,r]$.

Using the continuity of the distribution of innovations once again, we conclude that
$$
\mathbb{E}_x\left[e^{z\sigma_r}\mathbf{1}_{\{\sigma_r<T_0\wedge N\}}\mathbb{E}_{X_{\sigma_r}}[e^{z T_r}\mathbf{1}_{\{T_r=T_0<N\}}]\right]
$$
is continuous in $x\in[0,r]$. Therefore, it remains to show that, as $N\to\infty$,
\begin{equation}
\label{pos.f.1}
\sup_{x\in[0,r]}\Big|\mathbb{E}_x\left[e^{z\sigma_r}\mathbf{1}_{\{T_0>\sigma_r\ge N\}}\mathbb{E}_{X_{\sigma_r}}[e^{z T_r}]\right]\Big|\to0
\end{equation}
and 
\begin{equation}
\label{pos.f.2}
\Big|\mathbb{E}_x\left[e^{z\sigma_r}\mathbb{E}_{X_{\sigma_r}}[e^{z T_r}\mathbf{1}_{\{T_r\ge N\}}]\right]\Big|\to0.
\end{equation}

Similar to the derivation of \eqref{revis.1},
\begin{align*}
&\left|\mathbb{E}_x\left[e^{z\sigma_r}\mathbf{1}_{\{T_0>\sigma_r\ge N\}}\mathbb{E}_{X_{\sigma_r}}[e^{z T_r}]\right]\right|\\
&\hspace{1cm} \le \mathbb{E}_x\left[e^{\Re(z)\sigma_r}\mathbf{1}_{\{T_0>\sigma_r\ge N\}}\mathbb{E}_{X_{\sigma_r}}[e^{\Re(z) T_r}]\right]\\
&\hspace{1cm} \le \frac{2e^{\Re(z)}}{r^{\Re(z)/\log A}}\sum_{k=N}^\infty e^{\Re(z) k}\int_r^\infty\mathbb{P}_x(T_o>\sigma_r=k,X_{\sigma_r}\in dy)y^{\Re(z)/\log A}.
\end{align*}
Using now \eqref{revis.2}, one gets
$$
\left|\mathbb{E}_x\left[e^{z\sigma_r}\mathbf{1}_{\{T_0>\sigma_r\ge N\}}\mathbb{E}_{X_{\sigma_r}}[e^{z T_r}]\right]\right|
\le C(\Re(z),A,r)\sum_{k=N}^\infty e^{\Re(z) k}\mathbb{P}(T_0>\sigma_r>k-1).
$$
From this bound and Lemma~\ref{lem:two-sided} we infer that \eqref{pos.f.1} is valid for all $z$ with $\Re (z)<\lambda_a+2\varepsilon$.

Applying the Cauchy-Schwarz inequality, we obtain
$$
\left|\mathbb{E}_y\left[e^{zT_r}\mathbf{1}_{\{T_r\ge N\}}\right]\right|
\le \left(\mathbb{E}_y\left[e^{2\Re(z)T_r}\right]\right)^{1/2}\mathbb{P}_y^{1/2}(T_r\ge N).
$$
From this bound and Lemma~\ref{prop:all.mom}, we get 
\begin{align*}
\Big|\mathbb{E}_x\left[e^{z\sigma_r}\mathbb{E}_{X_{\sigma_r}}[e^{z T_r}\mathbf{1}_{\{T_r\ge N\}}]\right]\Big|
\le C\mathbb{E}_x\left[e^{\Re(z)\sigma_r}X_{\sigma_r}^{\Re(z)/\log A}\mathbb{P}^{1/2}_{X_{\sigma_r}}(T_r\ge N)\right]
\end{align*}
Since $\lim_{N\to\infty}\mathbb{P}^{1/2}_{X_{\sigma_r}}(T_r\ge N)$ almost surely and the family
$e^{\Re(z)\sigma_r}X_{\sigma_r}^{\Re(z)/\log A}$, $\Re(z)\le \lambda_a+\varepsilon$ is uniformly integrable,
we conclude that \eqref{pos.f.2} is valid. 
\end{proof}

We now establish an essential further property of the family of kernels $K_{\lambda}$, which exactly is the reason for introducing them. 
\begin{lemma}
\label{lem:K_lambda}
Consider $K_\lambda f:=\int_0^rK_\lambda(\cdot,dy)f(y)$, where $r$ is as in Lemma~\ref{prop:all.mom}.
Then, under the assumptions of Theorem~\ref{thm.analytic}, for all $\lambda$ with
$\Re(\lambda)\le\lambda_a+\varepsilon$, $K_\lambda$ is a compact operator on the Banach space $X=C([0,r])$ equipped with the supremum norm.

Furthermore, if $\lambda\in[0,\lambda_a+\varepsilon)$ then for every $X \ni f > 0$ (i.e. everywhere nonnegative and somewhere strictly positive)
and  all $x\in[0,r]$, $K_{\lambda}f(x)>0$.
\end{lemma}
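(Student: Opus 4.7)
The plan is to verify compactness of $K_\lambda$ via the Arzel\`a--Ascoli theorem, by showing that $\{K_\lambda f:\|f\|_\infty\le 1\}$ is uniformly bounded and equicontinuous on $[0,r]$. Uniform boundedness is immediate from Corollary~\ref{cor:all.mom}, since for $\|f\|_\infty\le 1$ one has
\begin{align*}
|K_\lambda f(x)| \le \mathbb{E}_x\!\left[e^{\Re(\lambda)\sigma_r}\mathbf{1}_{\{T_0>\sigma_r\}}\mathbb{E}_{X_{\sigma_r}}\!\left[e^{\Re(\lambda)T_r}\right]\right]\le C
\end{align*}
uniformly in $x\in[0,r]$ for $\Re(\lambda)\le\lambda_a+\varepsilon$.

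For equicontinuity I would approximate $K_\lambda$ in operator norm by truncations $K_\lambda^{(N)}$ obtained by inserting $\mathbf{1}_{\{\sigma_r\le N\}}\mathbf{1}_{\{T_r\le N\}}$ into the definition \eqref{def_K}. The norm of the residual $K_\lambda-K_\lambda^{(N)}$ is controlled by exponential tail estimates exactly as in the proof of Lemma~\ref{l:pos-function}: the $\{\sigma_r>N\}$ piece by Lemma~\ref{lem:two-sided}, and the $\{T_r>N\}$ piece by the Cauchy--Schwarz splitting combined with Lemma~\ref{prop:all.mom} and the uniform integrability of $e^{\Re(\lambda)\sigma_r}X_{\sigma_r}^{\Re(\lambda)/\log A}$, giving $\|K_\lambda-K_\lambda^{(N)}\|\to 0$ as $N\to\infty$. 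For each fixed $N$, the kernel of $K_\lambda^{(N)}$ is a finite sum over $(k,\ell)\in\{1,\dots,N\}^2$, and its $x$-dependence enters only through the first-step factor $\varphi(x_1-ax)$. Denoting by $p_k(x,y)$ the substochastic density of $X_k$ on $\{\sigma_r=k,T_0>k\}$ starting from $x\in[0,r]$ and by $q_\ell(y,z)$ that of $X_\ell$ on $\{T_r=\ell,T_0>\ell\}$ starting from $y>r$, a Fubini argument followed by integrating out all intermediate variables (using $\int_r^\infty\varphi(\cdot-a\,\cdot)\,dy\le 1$ and $\int_0^r\varphi(\cdot-a\,\cdot)\,dx_j\le 1$) yields, independently of $k$,
\begin{align*}
\int_r^\infty |p_k(x,y)-p_k(x_0,y)|\,dy \le \int_{\mathbb{R}}|\varphi(u-ax)-\varphi(u-ax_0)|\,du,
\end{align*}
which tends to $0$ as $x\to x_0$ by $L^1$-continuity of translation. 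Combined with the subprobability bound $\int_0^r q_\ell(y,z)\,dz\le 1$, this gives equicontinuity of $K_\lambda^{(N)}$ on the unit ball (the finite geometric sum $\sum_{k,\ell\le N}e^{\Re(\lambda)(k+\ell)}$ providing a harmless overall constant). Hence each $K_\lambda^{(N)}$ is compact, and compactness being closed under operator-norm limits, $K_\lambda$ is compact.

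For the positivity claim, let $\lambda\in[0,\lambda_a+\varepsilon)$ be real and $f\in C([0,r])$ satisfy $f\ge 0$, $f\not\equiv 0$. Continuity of $f$ produces an open set $V\subset(0,r]$ of positive Lebesgue measure on which $f\ge\delta>0$. Bounding $K_\lambda f(x)$ from below by the contribution of the single two-step event $\{X_1>r,\,X_2\in V\}$ (on which $\sigma_r=1$, $T_0>\sigma_r$, $T_r=1<T_0$ and $X_{T_r}\in V$) and using $\varphi>0$ almost everywhere,
\begin{align*}
K_\lambda f(x) \ge e^{2\lambda}\delta \int_r^\infty \varphi(y-ax)\int_V \varphi(z-ay)\,dz\,dy > 0
\end{align*}
for every $x\in[0,r]$.

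The main obstacle lies in reconciling the $L^1$-translation modulus of $\varphi$, which carries no decay in $k$ or $\ell$, with the exponentially growing weights $e^{\lambda(k+\ell)}$ in the expansion of $K_\lambda$. The truncation device confines those growing weights to a finite sum (where equicontinuity is immediate) and leaves the infinite-time tail to be absorbed separately in operator norm via the moment estimates of Lemmas~\ref{lem:two-sided} and~\ref{prop:all.mom}.
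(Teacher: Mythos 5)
Your proof is correct and follows essentially the same route as the paper: truncation of $\sigma_r$ and $T_r$ at level $N$, with the remainders controlled exactly by the estimates \eqref{pos.f.1}--\eqref{pos.f.2} (via Lemmas~\ref{lem:two-sided} and~\ref{prop:all.mom}), $L^1$-continuity of translation of $\varphi$ for the finitely many retained terms, Arzel\`a--Ascoli for compactness, and the same two-step event $\{X_1>r,\,X_2\in V\}$ for positivity. The only cosmetic differences are that you package the truncation as an operator-norm limit of compact operators instead of a direct ``choose $N$ large, then $y$ close to $x$'' equicontinuity estimate, and that you shift only the first factor $\varphi(x_1-ax)$ and integrate out the remaining variables, whereas the paper shifts all coordinates and uses $L^1$-continuity of the multivariate density $\Phi_{x,k,l}$.
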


\begin{proof}
We have to show that, for fixed $\lambda$, $\{K_\lambda f: f\in C([0,r]),\|f\|\le1\}$ is equicontinuous. This will imply that $K_\lambda$ maps to $C([0,r])$, in particular $\|K_\lambda 1\|<\infty$, and the proof can then be concluded by noting that $\sup\{\|K_\lambda f\| :f\in C([0,r]),\|f\|\le1\}=\|K_\lambda1\|<\infty$, which, together with equicontinuity, yields compactness.

For equicontinuity, one has to bring the smoothing effect of the density into play. We write
$$\Phi_{x,k,l}(x_1,\dots,x_{k+l}):=\varphi(x_1-ax)\cdots\varphi(x_{k+l}-ax_{k+l-1})$$
for the common density of $X_1,\dots,X_{k+l}$ and write
\begin{align*}
&K_\lambda f(x) = \sum_{k,l=1}^NI_{f,k,l}(x) + R_{f,N,1}(x)+R_{f,N,2}(x)
\end{align*}
and
\begin{equation}\label{comp1}
\begin{split}
&|K_\lambda f(y)-K_\lambda f(x)|  \\
\le& \sum_{k,l=1}^N|I_{f,k,l}(y)-I_{f,k,l}(x)| + 2\sup_{x\in[0,r]}\left(|R_{1,N,1}(x)|+ |R_{1,N,2}(x)|\right)
\end{split}
\end{equation}
with
\begin{align*}
I_{f,k,l}(x) =&   e^{\lambda(k+l)}\mathbb{E}_x\bigl[\mathbf{1}_{X_1.\dots,X_{k-1}\in]0,r[,\, X_k\ge r,\, X_{k+1},\dots,X_{k+l-1}>r,\, X_{k+l}\in]0,r]} f(X_{k+l})\bigr] \\
=& e^{\lambda(k+l)}\int_0^r\dots\int_0^r\int_r^\infty\dots\int_r^\infty\int_0^r\Phi_{x,k,l}(x_1,\dots,x_{k+l})f(x_{k+l})\text dx_{k+l}
\cdots\text dx_1\,,
\end{align*}
\begin{align*}
R_{f,N,1}(x)=\mathbb{E}_x\bigl[e^{\lambda\sigma_r}\mathbf{1}_{N<\sigma_r<T_0}\mathbb{E}_{X_{\sigma_r}}\bigl[e^{\lambda T_r}\mathbf{1}_{T_r<T_0}f(X_{T_r})\bigr]\bigr]
\end{align*}
and
\begin{align*}
R_{f,N,2}(x) = \mathbb{E}_x\bigl[e^{\lambda\sigma_r}\mathbf{1}_{\lbrace \sigma_r<T_0, \sigma_r\le N\rbrace}\mathbb{E}_{X_{\sigma_r}}\bigl[e^{\lambda T_r}\mathbf{1}_{\lbrace N<T_r<T_0\rbrace}f(X_{T_r})\bigr]\bigr] \,.
\end{align*}
Since
$$\Phi_{y,k,l}(x_1,\dots,x_{k+l})=\Phi_{x,k,l}(x_1-a(y-x),\dots,x_{k+l}-a^{k+l}(y-x))\,,$$
we can conclude
\begin{align*}
|I_{f,k,l}(y)-I_{f,k,l}(x)|
\le \int_0^\infty\dots\int_0^\infty|\Phi_{y,k,l}-\Phi_{x,k,l}|\text d(x_1,\dots,x_{k+l})\xrightarrow{y\to x}0\,,
\end{align*}
independently of $f$. (This is clear for continuous $\Phi$ with compact support and follows easily for general $\Phi$ if one approximates them in $L^1$ by such ones.) 

It follows from \eqref{pos.f.1} and \eqref{pos.f.2} that
\begin{align*}
\lim_{N\to\infty}\sup_{x\in[0,r]}|R_{1,N,1}(x)|=0
\end{align*}
and
\begin{align*}
\lim_{N\to\infty}\sup_{x\in[0,r]}|R_{1,N,2}(x)|=0.
\end{align*}
So, if we go back to equation~\eqref{comp1}, choose first $N$ large and then $y$ sufficiently close to $x$, equicontinuity follows.

We now prove the positivity of $K_\lambda$ for real values of $\lambda$. 
If $f(z)\ge\varepsilon>0$ for $z\in I:=[z_0-\delta,z_0+\delta]$, then
\begin{equation*}
K_\lambda f(x)\ge \varepsilon \mathbb{E}_x\left[\mathbf{1}_{\{T_0>\sigma_r\}}
\mathbb P_{X_{\sigma_r}}\left(T_r<T_0,X_{T_r}\in I\right)\right]\,.
\end{equation*}
The expression is bounded from below by $\varepsilon\mathbb P_x(X_1\in(r,r+\delta],X_2\in I)$, which is positive if the density $\varphi$ is positive on all of $\mathbb R$.

\end{proof}
\subsection{Fredholm alternative and the proof of Theorem~\ref{thm.analytic}}
We will now make use of the so-called analytic Fredholm alternative. For the convenience of the reader we formulate a suitable version of this result.
\vspace{6pt}\\
\textbf{Theorem B} (Theorem 1 in \cite{S68}).\textit{
Let $(X,\|\cdot\|_X)$ be a complex Banach space and let $\mathcal{B}(X)$ denote the space of bounded linear endomorphisms. Let $D$ be an open connected subset of $\mathbb{C}$. Let $A:D \mapsto \mathcal{B}(X)$ be an operator-valued analytic function such that $A(z)$ is a compact operator for every $z\in D$. Then either:
\begin{enumerate}
\item $(I-A(z))^{-1}$ does not exist for any $z \in D$, or
\item $(I-A(z))^{-1}$ exists for all $z \in D\setminus S$, where $S\subset D$ is discrete. 
\end{enumerate}}

We are now going to apply this result in order to deduce the subsequent proposition:
\begin{proposition}
For $z \in \mathbb{C}$ with $\Re z <\lambda_a + \varepsilon$, the operator valued function 
\begin{displaymath}
\lbrace z \in \mathbb{C} \mid \Re z <\lambda_a + \varepsilon\rbrace \ni z\mapsto R_z:= (I-K_z)^{-1}
\end{displaymath}
is meromorphic.
\end{proposition}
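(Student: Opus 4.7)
The plan is to apply the analytic Fredholm alternative (Theorem B) to the operator-valued map $z\mapsto K_z$ on the open connected set $D:=\{z\in\mathbb{C}:\Re z<\lambda_a+\varepsilon\}$, with Banach space $X=C([0,r])$. Compactness of each $K_z$ on $X$ has already been verified in Lemma~\ref{lem:K_lambda}, so the two remaining inputs to Theorem B are (a) analyticity of $z\mapsto K_z$ in the operator norm, and (b) exclusion of the first alternative, i.e.~existence of at least one $z\in D$ where $I-K_z$ is invertible.

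For analyticity, the natural move is to decompose
$$
K_z=\sum_{k,l\ge1} e^{z(k+l)}M_{k,l},
$$
where $M_{k,l}$ is the $z$-independent bounded operator on $C([0,r])$ whose kernel is
$$
M_{k,l}(x,dy)=\int_r^\infty \mathbb{P}_x\bigl(T_0>\sigma_r=k,\,X_{\sigma_r}\in du\bigr)\,\mathbb{P}_u\bigl(T_r=l,\,T_r<T_0,\,X_{T_r}\in dy\bigr).
$$
Each summand is entire in $z$ with values in $\mathcal{B}(X)$, so it suffices to prove that the series converges in operator norm, uniformly on compact subsets of $D$. This in turn reduces to the estimate
$$
\sum_{k,l\ge1} e^{(\lambda_a+2\varepsilon)(k+l)}\|M_{k,l}\|<\infty,
$$
which is exactly the content of Corollary~\ref{cor:all.mom} combined with Lemma~\ref{lem:two-sided}; the same bounds already underlie the finiteness of $K_\lambda$ in the proof of Lemma~\ref{l:pos-function}.

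To rule out alternative (1), I would use that for real $z$ tending to $-\infty$,
$$
\|K_z\|_{X\to X}\le \sup_{x\in[0,r]}\mathbb{E}_x\Bigl[e^{z\sigma_r}\mathbf{1}_{\{T_0>\sigma_r\}}\mathbb{E}_{X_{\sigma_r}}\bigl[e^{zT_r}\bigr]\Bigr]\longrightarrow 0
$$
by dominated convergence (with the $z=0$ bound from Corollary~\ref{cor:all.mom} as dominating integrand). Hence for such $z$ a Neumann series converges and $(I-K_z)^{-1}$ exists, so Theorem B delivers a discrete set $S\subset D$ outside which $R_z$ is analytic.

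Finally, to upgrade ``analytic on $D\setminus S$'' to ``meromorphic on $D$'', the standard Riesz--Fredholm Laurent expansion at each $z_0\in S$ applies: since $K_{z_0}$ is compact, $I-K_z$ is an analytic family of Fredholm operators of index zero near $z_0$, and the principal part of the local Laurent expansion of $(I-K_z)^{-1}$ is finite-rank and of finite order, hence $z_0$ is a pole. The main obstacle I foresee is the operator-norm analyticity in the first step, because the term-by-term compactness from Lemma~\ref{lem:K_lambda} does not by itself give a summable bound; it is the moment hypothesis $\mathbb{E}(\xi_1^+)^t<\infty$ for all $t>0$ that makes the geometric growth $e^{(\lambda_a+2\varepsilon)(k+l)}$ absorbable by $\|M_{k,l}\|$.
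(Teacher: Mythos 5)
Your proposal is correct and follows essentially the same route as the paper: invoke the analytic Fredholm alternative (Theorem B) on $D=\{\Re z<\lambda_a+\varepsilon\}$ with compactness of $K_z$ from Lemma~\ref{lem:K_lambda}, and exclude the first alternative by a Neumann-series argument at a point where $\|K_z\|<1$ (the paper takes $\lambda$ sufficiently small, you let real $z\to-\infty$). Your extra verification of operator-norm analyticity of $z\mapsto K_z$ via the series $\sum_{k,l}e^{z(k+l)}M_{k,l}$ and the bounds behind Corollary~\ref{cor:all.mom}, as well as the Riesz--Fredholm pole argument, merely makes explicit details the paper leaves implicit.
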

\begin{proof}
It suffices to show that for some $z \in \mathbb{C}$ with $\Re z <\lambda_a+\varepsilon$  the inverse of $I-K_z$ exists and defines a bounded operator. This follows from the fact that for $\lambda$ sufficiently small the operator norm of $K_{\lambda}$ can be easily seen to be strictly smaller than one and the inverse thus can be shown to exist by the Neumann series.  An application of Theorem B therefore shows that $(I-K_z)^{-1}$ is meromorphic on the required domain. 
\end{proof}
Now we are able to draw a conclusion analogously to Corollary 1 of \cite{SW75}:
\begin{corollary}
If for  $z \in \mathbb{C}$ with $\Re z <\lambda_a + \varepsilon$ the function $u=u_z$ is a solution of 
\begin{displaymath}
u=F_z+K_zu
\end{displaymath}
then the $X$-valued function $u$ is meromorphic in $z$. Therfore, we conclude that the function 
\begin{displaymath}
z\mapsto L(x,z):=\mathbb{E}_x\bigl[ e^{zT_0}\bigr]
\end{displaymath} 
has a meromorphic continuation to $\lbrace z \in \mathbb{C}\mid \Re z< \lambda_a+ \varepsilon \rbrace$.
\end{corollary}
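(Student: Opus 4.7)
The plan is to represent the unique solution of the renewal equation as $u_z = R_z F_z$, invoke the preceding proposition for the meromorphy of $R_z = (I - K_z)^{-1}$, and then identify $u_z$ with $L(\,\cdot\,,z) = \mathbb{E}_\cdot[e^{z T_0}]$ on a left half-plane where the latter is defined by an absolutely convergent Dirichlet series. The identity theorem for meromorphic functions then delivers the claimed continuation to all of $D:=\{\Re z<\lambda_a+\varepsilon\}$.

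The key preliminary observation is that $z\mapsto F_z$ is an $X$-valued analytic function on $D$. Writing $F_z$ via \eqref{def_F} as a double Dirichlet series of the form $\sum_k e^{zk} g_k(x) + \sum_{k,\ell} e^{z(k+\ell)} h_{k,\ell}(x)$, where the coefficient functions $g_k,h_{k,\ell}\in C([0,r])$ arise from joint densities of $(X_1,\dots,X_{k+\ell})$ restricted to the relevant excursion event, every finite truncation is a polynomial in $e^z$ taking values in $C([0,r])$ and hence entire as an $X$-valued function. The tail estimates from Lemma~\ref{lem:two-sided} and Corollary~\ref{cor:all.mom} -- which were precisely the substance of the proof of Lemma~\ref{l:pos-function} -- dominate these series uniformly in the supremum norm on compact subsets of $D$, so $F_z$ is analytic on $D$ by standard Weierstrass/Morera arguments. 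An entirely analogous argument gives operator-norm analyticity of $z\mapsto K_z$, which is needed to make sense of ``meromorphic'' in the statement of the preceding proposition.

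Combining analyticity of $F_z$ with meromorphy of $R_z$ yields that $u_z := R_z F_z$ is a meromorphic $X$-valued function on $D$, and away from the pole set of $R_z$ it is the unique solution of $u = F_z + K_z u$. It remains to show $u_z(x)=L(x,z)$ on a suitable left half-plane. For $\Re z$ sufficiently negative one has $\|K_z\|<1$, so $R_z$ is given by its Neumann series; moreover, the derivation leading to \eqref{main_decomp} is valid for any $\lambda$ with $\Re\lambda<\lambda_a$, so $L(\,\cdot\,,z)$ satisfies the same renewal equation and hence coincides with $R_z F_z = u_z$ there. As the two meromorphic functions agree on an open set of $D$, they agree wherever both are defined, which exhibits $u_z$ as the meromorphic continuation of $L(\,\cdot\,,z)$. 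The main technical obstacle is the uniform-in-$z$ estimate for the tails of the Dirichlet series defining $F_z$ and $K_z$ on compact subsets of $D$; but this control is already implicit in the estimates of Section~\ref{s:renewal}, so the present step amounts to reorganising those bounds rather than proving anything new.
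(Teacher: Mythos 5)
Your argument is correct and follows essentially the same route the paper intends (and leaves largely implicit): write the solution as $u_z=(I-K_z)^{-1}F_z$, use the meromorphy of the resolvent from the preceding proposition, and identify $u_z$ with $\mathbb{E}_\cdot[e^{zT_0}]$ on a left half-plane where $\|K_z\|<1$ and the renewal equation \eqref{main_decomp} holds, then continue meromorphically. Your explicit verification of the $X$-valued analyticity of $z\mapsto F_z$ and $z\mapsto K_z$ via the tail bounds of Lemma~\ref{lem:two-sided} and Corollary~\ref{cor:all.mom} is exactly the detail the paper takes for granted when invoking the analytic Fredholm alternative, so nothing is missing.
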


We now aim to study the dominant poles.
\begin{proposition}\label{prop:gf}
The function $L(z)$ which meromorphically extends the Laplace transform of $T_0$ has a simple pole at
$z=\lambda_a$ and no other poles on the interval $\{z=\lambda_a+i\psi,\ \psi\in(-\pi,\pi)\}$.
\end{proposition}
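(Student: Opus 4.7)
The plan is to convert the question about poles of $L(x,\cdot)$ into a spectral question about $K_z$. Since $F_z$ is analytic on $\{\Re z<\lambda_a+\varepsilon\}$ by Lemma~\ref{l:pos-function}, and formally $L(x,z)=((I-K_z)^{-1}F_z)(x)$, the poles of $L$ occur precisely at those $z$ for which $1\in\sigma(K_z)$, with order controlled by the behaviour of the resolvent.

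First I would verify that $r(K_{\lambda_a})=1$ and that this value is an algebraically simple eigenvalue. Monotonicity of $K_\lambda$ in real $\lambda$ together with convergence of the Neumann series for $\lambda<\lambda_a$ gives $r(K_\lambda)<1$ there, hence $r(K_{\lambda_a})\le1$ by continuity. On the other hand, $L(x,\cdot)$ is the ordinary generating function of the non-negative sequence $\mathbb P_x(T_0=n)$ evaluated at $e^\lambda$, so by Pringsheim's theorem it must have a singularity at $\lambda_a$; together with the meromorphic extension this forces $1\in\sigma(K_{\lambda_a})$, i.e.\ $r(K_{\lambda_a})=1$. Theorem~A applied to the compact, strongly positive operator $K_{\lambda_a}$ (the strong positivity hypothesis holds by Lemma~\ref{lem:K_lambda}) then gives that $1$ is an algebraically simple eigenvalue with strictly positive eigenfunction $v$ and strictly positive left eigenfunctional $v^*$. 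Standard analytic perturbation theory yields a simple pole of $(I-K_z)^{-1}$ at $\lambda_a$, provided the transversality condition $\langle v^*,(\partial_zK_z)|_{z=\lambda_a}v\rangle\ne0$ holds; but differentiating in $z$ simply introduces the strictly positive factor $\sigma_r+T_r$ into an otherwise positive integrand, so this pairing is strictly positive.

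The remaining, and main, task is to exclude $1\in\sigma(K_{\lambda_a+i\psi})$ for $0<|\psi|<\pi$. Suppose to the contrary $K_{\lambda_a+i\psi}g=g$ with $g\in C([0,r])\setminus\{0\}$. Pulling the modulus inside the expectation defining $K_{\lambda_a+i\psi}$ yields $|g|\le K_{\lambda_a}|g|$ pointwise; applying the positive left eigenfunctional $v^*$ turns this into equality, and strong positivity then gives $|g|=cv$ pointwise for some $c>0$. Writing $g=cv\,e^{i\theta}$ with $\theta\colon[0,r]\to\mathbb{R}$ continuous (using $v>0$ and simple connectedness of $[0,r]$), saturation of the triangle inequality in the expectation forces
\begin{equation*}
\psi(\sigma_r+T_r)+\theta(X_{T_r})-\theta(x)\in2\pi\mathbb Z\quad\mathbb P_x\text{-a.s.\ on }\{T_0>\sigma_r,\,T_r<T_0\}.
\end{equation*}
Since $\varphi>0$ a.e., for each integer $k\ge2$ the conditional law of $X_{T_r}$ given $\sigma_r+T_r=k$ has a non-trivial absolutely continuous component, so continuity of $\theta$ together with the integer-valued constraint forces $\theta$ to be locally constant, hence constant on $[0,r]$. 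The phase condition then reduces to $\psi(\sigma_r+T_r)\in2\pi\mathbb Z$ a.s.; since both $\{\sigma_r+T_r=2\}$ and $\{\sigma_r+T_r=3\}$ have positive probability, $2\psi,3\psi\in2\pi\mathbb Z$, and $|\psi|<\pi$ then forces $\psi=0$, contradicting the assumption. The main obstacle is this final phase-rigidity step, in particular verifying the aperiodicity of $\sigma_r+T_r$ and the richness of the conditional laws of $X_{T_r}$.
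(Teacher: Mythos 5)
Your argument is correct, and its skeleton coincides with the paper's: Pringsheim's theorem locates a singularity of the generating function at $\lambda_a$, the meromorphic extension turns it into a pole and hence forces $1\in\sigma(K_{\lambda_a})$; the renewal identity $L_\lambda=F_\lambda+K_\lambda L_\lambda$ with $F_\lambda\ge1$ bounds the spectral radius for real $\lambda<\lambda_a$; and the Krein--Rutman/Theorem A package gives algebraic simplicity of the leading eigenvalue. You genuinely deviate in the two delicate steps. For simplicity of the pole, the paper verifies the positivity of $\frac{d}{d\lambda}K_\lambda$ and invokes the operator residue theorem of \cite{SW75}; you instead run standard analytic perturbation theory for the simple isolated eigenvalue, with the transversality pairing $\langle v^*,\partial_zK_z\,v\rangle>0$ playing exactly the role of that positivity --- same mechanism, but your version makes the non-degeneracy explicit and avoids the external residue theorem. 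For the absence of poles at $\lambda_a+i\psi$, $0<|\psi|<\pi$, the paper shows the phase of the integrand cannot be a.s.\ constant, obtains a strict triangle inequality, and contradicts $r(K_{\lambda_a})=1$ via Marek's Collatz--Wielandt min--max characterization \cite{Marek66}; you instead pair $|g|\le K_{\lambda_a}|g|$ with the strictly positive left eigenfunctional to force $|g|=cv$, extract a continuous phase $\theta$, and prove phase rigidity plus aperiodicity of $\sigma_r+T_r$ (both values $2$ and $3$ are attained with positive probability) to conclude $\psi\in2\pi\mathbb Z$. Your route is more self-contained and supplies precisely the detail the paper treats tersely (why the phase of $g(X_{T_r})$ cannot compensate $e^{i\psi(\sigma_r+T_r)}$, which is where the full support of the conditional law of $X_{T_r}$ enters), at the cost of some measure-theoretic bookkeeping; the paper's route never needs to discuss $\theta$ but requires the extra machinery of \cite{Marek66}. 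Two small repairs to your write-up: the Neumann-series remark only yields $\sum_n\|K_\lambda^n\mathbf 1\|_\infty\le\|L_\lambda\|_\infty<\infty$, hence $r(K_\lambda)\le1$ for $\lambda<\lambda_a$, not the strict inequality you claim (but $\le1$ is all you use); and passing this bound to $\lambda=\lambda_a$ requires continuity of $\lambda\mapsto r(K_\lambda)$, which is not automatic for spectral radii and should be justified, as in the paper, by Theorem 2.1 of \cite{D08}, while your perturbation step tacitly uses the analyticity of $z\mapsto K_z$ on $\{\Re z<\lambda_a+\varepsilon\}$, available from Lemma~\ref{lem:K_lambda}.
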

\begin{proof}
We first observe that 
\begin{displaymath}
\mathbb{E}_x\bigl[z^{T_0}\bigr]=\sum_{n=0}^{\infty}\mathbb{P}_x(T=n)z^{n}
\end{displaymath}
is a power series with non-negative coefficients and therefore by Pringsheim' s theorem (see e.g. Theorem 4.1.2 in \cite{MN91}) the radius of convergence $r_a=e^{\lambda_a}$ is a singularity. Therefore $\lambda_a$ is a singularity of $L$ and as $L$ is meromorphic we conclude that $\lambda_a$ is a pole. 
Observe that this implies that the operator valued function $(I-K_z)^{-1}$ has a pole at $z=\lambda_a$. Therefore, we conclude that $K_{\lambda_a}$ has the eigenvalue $1$.

Observe next that for $\lambda<\lambda_a+\varepsilon$ the operator $K_{\lambda}$ is positive and compact on the Banach lattice $X$. Using the second part of Lemma \ref{lem:K_lambda} we conclude by the classical Krein-Rutman theorem as given in Theorem 4 of \cite{S64} that the spectral radius $r(K_{\lambda})>0$ is in fact an eigenvalue with algebraic multiplicity one and the associated eigenfunction can be chosen to be non-negative. Furthermore, by Theorem 2.1 in \cite{D08} the spectral radius is continuous in $\lambda$.

We now claim that $1$ coincides with the spectral radius of $K_{\lambda_a}$.  Assume contrary that the spectral radius $r(K_{\lambda_a})$ of $K_{\lambda_a}$ is strictly bigger then $1$. Under this assumption and the continuity of the spectral radius we conclude that for $\lambda<\lambda_a$ we still have $\rho(K_{\lambda})>1$. We now claim, that this contradicts that the equation
\begin{equation}\label{e:renewaleq}
L_\lambda=F_{\lambda}+K_{\lambda}L_\lambda
\end{equation}
holds. Observe that by Lemma \ref{l:pos-function} the function $F_{\lambda}$ is lower bounded by $1$ and iterating equation \eqref{e:renewaleq} we conclude that for $n\geq 1$ 
\begin{displaymath} 
L_{\lambda} \geq K_{\lambda}^n\mathbf{1}.
\end{displaymath}
This contradicts the fact $\|K_{\lambda}\mathbf{1}\|_{\infty} \rightarrow\infty$ if $n\rightarrow \infty$ as a consequence of the spectral radius formula $\rho(K_{\lambda})=\lim_{n\rightarrow\infty}\|K_{\lambda}^n\|^{1/n}$. 

We now aim to show that the function $L$ has a simple pole at $\lambda_a$. Here, we can observe that the derivative $\frac{d}{d\lambda}K_{\lambda}$ for real $\lambda < \lambda_a+\varepsilon$ defines a positive operator on the Banach space $X$. Therefore, we have shown, that the properties $P1$, $P2$ and $P3$ in \cite{SW75} are satisfied and, as shown on page 233 of \cite{SW75}, we can then conclude via Corollary 1 in \cite{SW75} that the pole at $\lambda_a$ is in fact simple.  

Now assume that $L(z)$ has another pole at $z=\lambda_a+i\psi$, i.e. $K_z$ has an eigenvalue 1 with some eigenfunction $g$ there. Applying the triangle inequality to the definition of $K_zg$, one obtains $|g(x)|=\left|K_{z}g(x)\right|\le K_{\lambda_a}|g(x)|$ for all $x\in[0,r]$.
The positivity of the density $\varphi$ implies that, for any $m,n\in\mathbb N$, it happens with positive probability that $\sigma_r=m,T_r=n$ and the whole expression inside the expectation value defining $K_z$ is nonzero. Consequently, its phase is not a.s. constant unless $\psi$ is a multiple of $2\pi$, the triangle inequality is even strict and, by compactness of $[0,r]$,
$$\inf_{x\in[0,r]}\frac{K_{\lambda_a}|g(x)|}{|g(x)|}>1\,.$$
Now we look at $K_{\lambda_a}$ as an operator on the real-valued continuous functions $C([0,r])$.
Since $r(K_{\lambda_a})=1$, what we just found would mean that the min-max principle in the form
\begin{equation*}
r(K_{\lambda_a}) = \max_{h\in\mathcal K}\inf_{\substack{h'\in \mathcal H' \\ \langle h,h'\rangle>0}} \frac{\langle K_{\lambda_a}h,h'\rangle}{\langle h,h'\rangle}
\end{equation*}
with $\mathcal K$ denoting the nonnegative elements of $C([0,r])$ and $\mathcal H':=\{h\mapsto h(x)\mid x\in[0,r]\}$ is violated. However, all conditions from \cite{Marek66} ensuring its validity are satisfied:
\begin{itemize}
\item $C([0,r])=\mathcal K-\mathcal K$ and the cone $\mathcal K$ is closed and normal ($\|g+h\|\ge\|g\|$ for all normalised $g,h\in\mathcal K$).
\item $\mathcal H'$ is total, i.e. if $\langle h,h'\rangle\ge0$ for all $h'\in\mathcal H'$, then $h\in\mathcal K$.
\item $K_{\lambda_a}$ is semi non-supporting: For all nonzero $h\in\mathcal K$,  $K_{\lambda_a}h(x)>0$ for all $x$ as in Lemma~\ref{lem:K_lambda} and therefore one can conclude for all nonzero $h'\in\mathcal K'$ (the dual cone, i.e. positive measures) $\langle K_{\lambda_a}h,h'\rangle>0$.
\item The resolvent $\lambda\mapsto(\lambda-K_{\lambda_a})^{-1}$ has only finitely many singularities with $|\lambda|=r(K_{\lambda_a})$, all of them poles, because $K_{\lambda_a}$ is compact.
\end{itemize}
\end{proof}
\begin{proof}[Completion of the proof of Theorem~\ref{thm.analytic}]
It follows from the Proposition~\ref{prop:gf} that the generating function $\mathbb{E}_x[z^{T_0}]$
has the unique simple pole at $e^{\lambda_a}$ and that there are no further
poles on the disc or radius $e^{\lambda_a+\gamma}$ with some $\gamma>0$. Then we have the following
representation
$$
\mathbb{E}_x[z^{T_0}]=\frac{V(x)}{e^{\lambda_a}-z}+g_x(z),
$$
where $g_x$ is analytical on the disc with radius $e^{\lambda_a+\gamma}$. Therefore,
\begin{align}\label{thm.an.1wdh}
\mathbb{P}_x(T_0=n)=V(x)e^{-\lambda_a(n+1)}+O(e^{-(\lambda_a+\gamma)n}). 
\end{align}
Since the left hand side is positive, we infer that the function $V$ is positive as well.
Therefore, it remains to show that $v$ is $e^{\lambda_a}$-harmonic for $P_+$. Let $r$ be so large that
$\mathbb{E}_y[e^{\lambda_a T_r}]$ is finite. For every $x\ge r$ one has the inequality
$$
\mathbb{P}_x(T_0\ge n)\le\sum_{m=1}^n\mathbb{P}_x(T_r=m)\mathbb{P}_r(T_0\ge n-m).
$$
It follows from \eqref{thm.an.1wdh} that $\mathbb{P}_r(T_0\ge k)\le C(r)e^{-\lambda_a(k+1)}$, $k\ge0$. Thus,
\begin{equation}
\label{thm.an.2}
\mathbb{P}_x(T_0\ge n)\le C(r)e^{-\lambda_a n}\sum_{m=1}^n\mathbb{P}_x(T_r=m)
\le C(r)\mathbb{E}_x[e^{\lambda_a T_r}]e^{-\lambda_a(n+1)}.
\end{equation}
It is immediate from \eqref{thm.an.1wdh} that
\begin{equation}
\label{thm.an.3}
\mathbb{P}_x(T_0\ge n)= \frac{V(x)}{1-e^{-\lambda_a}}e^{-\lambda_a(n+1)}
+O\left(e^{-(\lambda_a+\gamma)n}\right).
\end{equation}
From this equality and from \eqref{thm.an.2} we infer that
\begin{equation}
\label{thm.an.4}
V(x)\le \frac{C(r)}{1-e^{-\lambda_a}}\mathbb{E}_x[e^{\lambda_a T_r}],\quad x\ge r.
\end{equation}
Since
$$
\mathbb{E}_x[e^{\lambda_a T_r}]=\int_0^r P(x,dy)e^{\lambda_a}+
\int_r^\infty P(x,dy)e^{\lambda_a}\mathbb{E}_y[e^{\lambda_a T_r}],
$$
we infer that
\begin{equation}
\label{thm.an.5}
\int_r^\infty P(x,dy)\mathbb{E}_y[e^{\lambda_a T_r}]<\infty
\end{equation}
and, in view of \eqref{thm.an.4},
\begin{equation}
\label{thm.an.6}
\int_0^\infty P(x,dy)V(y)<\infty.
\end{equation}

Fix some $A>r$ and consider the equality
\begin{align}
\label{thm.an.7}
\mathbb{P}_x(T_0\ge n+1)&=\int_0^\infty P(x,dy)\mathbb{P}_y(T_0\ge n)\\
\nonumber
&=\int_0^A P(x,dy)\mathbb{P}_y(T_0\ge n)+\int_A^\infty P(x,dy)\mathbb{P}_y(T_0\ge n).
\end{align}
Combining \eqref{thm.an.2} and \eqref{thm.an.5}, we obtain
\begin{equation}
\label{thm.an.8}
\lim_{A\to\infty}\limsup_{n\to\infty}
e^{\lambda_a(n+1)}\int_A^\infty P(x,dy)\mathbb{P}_y(T_0\ge n)=0.
\end{equation}
Furthermore, by \eqref{thm.an.3} and \eqref{thm.an.6},
\begin{equation}
\label{thm.an.9}
\lim_{A\to\infty}\lim_{n\to\infty}e^{\lambda_a(n+1)}\int_0^A P(x,dy)\mathbb{P}_y(T_0\ge n)
=\int_0^\infty P(x,dy)V(y).
\end{equation}
Plugging \eqref{thm.an.8} and \eqref{thm.an.9} into \eqref{thm.an.7}, we obtain
$$
\lim_{n\to\infty} e^{\lambda_a(n+1)}\mathbb{P}_x(T_0\ge n+1)=\int_0^\infty P(x,dy)V(y).
$$
According to \eqref{thm.an.3}, the limit on the left hand side equals to $e^{-\lambda_a}V(x)$.
As a result we have the equality
$$
e^{-\lambda_a}V(x)=\int_0^\infty P(x,dy)V(y).
$$
Therefore, the proof is complete.
\end{proof}
\section{Innovations with regularly varying tails}\label{s:reg}
The main purpose of this section is to show that the finiteness of all moments of $\xi_1^+$ is
necessary for getting purely exponential decay for the tail of $T_0$. More precisely, we are going to
show that if the right tail is regulaly varying then, independent of the index of regular variation, the 
asymptotic behaviour of $\mathbb{P}_x(T_0>n)$ depends on the slowly varying component of
$\mathbb{P}(\xi_1>x)$. In particular, it may happen that $e^{\lambda_a n}\mathbb{P}_x(T_0>n)\to0$ as
$n\to\infty$.
\begin{proposition}
\label{prop:reg.tails}
Assume that 
\begin{equation}
\label{reg.tails.0}
\mathbb{P}(\xi_1>x)=x^{-r}L(x)
\end{equation}
for some $r>0$ and some slowly varying function $L$.
Then, for all $M\ge0$,
\begin{equation}
\label{reg.tails.1}
\liminf_{n\to\infty}\frac{1}{n}\log\mathbb{P}(T_M>n)\ge r\log a.
\end{equation}
If, in addition, 
\begin{equation}
\label{reg.tails.1a}
L(x)=O(\log^{-2r-2} x),
\end{equation}
then, for all $M$ sufficiently large, 
\begin{equation}
\label{reg.tails.2}
\lim_{n\to\infty} a^{-rn}\mathbb{P}(T_M>n)=0.
\end{equation}
In fact, one even has 
\begin{displaymath}
\sum_{n=0}^{\infty}a^{-rn}\mathbb{P}(T_M>n)<\infty.
\end{displaymath}
\end{proposition}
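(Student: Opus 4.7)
My plan is to treat the lower bound \eqref{reg.tails.1} first, then \eqref{reg.tails.2} together with the summability.

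\emph{Lower bound.} I would use a single-big-jump argument. Decompose
$$X_k = a^k X_0 + a^{k-1}\xi_1 + Z_k,\qquad Z_k:=\sum_{j=2}^{k}a^{k-j}\xi_j,$$
and choose a slowly growing cut-off $C_n$ (a suitable power of $n$) such that, by a union bound combined with the moment $\mathbb{E}|\xi_1|^{\delta}<\infty$, $\mathbb{P}(\min_{k\le n} Z_k\ge -C_n)\ge \tfrac12$ for all large $n$. On the intersection with $\{\xi_1\ge (M+C_n)a^{1-n}\}$ (assuming $X_0\ge 0$) one checks directly that $X_k\ge M$ for every $k\le n$, so $T_M > n$ there. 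Regular variation gives
$$\tfrac{1}{n}\log \mathbb{P}\bigl(\xi_1\ge (M+C_n)a^{1-n}\bigr)\xrightarrow{n\to\infty}r\log a,$$
since $\log C_n=o(n)$ and $\log L(x)/\log x\to 0$, establishing \eqref{reg.tails.1}.

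\emph{Upper bound and summability.} With $b_n := Ma^{-n}$, decompose
$$\mathbb{P}(T_M>n)\le\mathbb{P}\Bigl(\max_{j\le n}\xi_j>b_n\Bigr)+\mathbb{P}\Bigl(T_M>n,\ \max_{j\le n}\xi_j\le b_n\Bigr).$$
A union bound dominates the first summand by $n\mathbb{P}(\xi_1>b_n)=nb_n^{-r}L(b_n)$; the hypothesis $L(x)=O(\log^{-2r-2}x)$ turns this into $O(a^{rn}/n^{2r+1})$, which after dividing by $a^{rn}$ is a summable $O(n^{-2r-1})$. For the second summand, the point is that on this event no single innovation can by itself keep the chain above $M$ all the way to time $n$, so either $X_0$ itself must be of order $Ma^{-n}$ or several moderate jumps must conspire. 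The first alternative is handled by the stationary tail $\pi([Ma^{-n},\infty))=O(a^{rn}/n^{2r+2})$ (standard for AR(1) with regularly varying innovations), summable after division by $a^{rn}$. The second I would bound by iterating the same decomposition at an intermediate time $m=\alpha n$: the Markov property together with the monotonicity \eqref{eq.3} reduces it to a shorter-horizon first-summand estimate plus a smaller-scale second-summand estimate, which one bootstraps.

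The main obstacle is this second summand: the naive bound $\mathbb{P}(T_M>n)\le\mathbb{P}(X_n>M)$ is useless since its right-hand side converges to $\pi((M,\infty))>0$, and one must genuinely exploit the path constraint $X_k>M$ for every $k\le n$. The exponent $2r+2$ in the hypothesis on $L$ is calibrated precisely so that both the ``isolated big jump'' piece $O(n^{-2r-1})$ and the ``atypically large starting point'' piece $O(n^{-2r-2})$ are summable after the $a^{-rn}$ reweighting.
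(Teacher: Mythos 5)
Your lower bound is fine and is a genuinely one‑big‑jump argument, close in spirit to the paper's: the paper sends the chain in one step above $MA^k$ (with $A>a^{-1}$) and lets it descend through the geometric levels $MA^j$, controlling the descent by a product of probabilities, whereas you make one jump of size $\approx Ma^{1-n}$ and control the remaining additive noise by a cut-off $C_n$. One caveat: the proposition only assumes regular variation of the \emph{right} tail, and the paper's proof uses only the standing logarithmic moment $\mathbb{E}\log(1+|\xi_1|)<\infty$ for the left tail; your union bound with $C_n$ a power of $n$ needs $\mathbb{E}(\xi_1^-)^\delta<\infty$, which is not granted here. This is easily repaired (let $C_n$ grow subexponentially, so that still $\log C_n=o(n)$), but as written it quietly strengthens the hypotheses.

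The upper bound, however, has a real gap. Cutting at the single scale $b_n=Ma^{-n}$ does not isolate the dominant scenarios: on $\{\max_{j\le n}\xi_j\le b_n\}$ the chain can still be kept above $M$ up to time $n$ by a single jump of size about $Ma^{-(n-k)}$ at an intermediate time $k$, which is far below $b_n$; summing these scenarios gives a contribution of order $\sum_{k}\mathbb{P}(T_M>k-1)\,a^{r(n-k)}L(a^{-(n-k)})$, i.e.\ a convolution of the unknown sequence $\mathbb{P}(T_M>\cdot)$ with the innovation tail. Neither your stationary-tail bound for a large starting point (the chain starts at a fixed point anyway) nor a ``shorter-horizon first-summand estimate'' at $m=\alpha n$ controls this, and you never explain how the bootstrap closes or where the hypothesis ``$M$ sufficiently large'' enters — yet it must. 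The paper's proof resolves exactly this: it uses \emph{lag-dependent} thresholds, $B_n=\{\xi_k\le h\,a^{-(n-k+1)}/(n-k+1)^2\ \text{for all }k<T_M\wedge n\}$, observes that on $\{T_M>n\}\cap B_n$ one has deterministically $X_n\le aM+2h/a\le M$ once $M\ge 2h/(a(1-a))$ (so that moderate jumps simply cannot conspire — this is where large $M$ is used), hence $\{T_M>n\}\subset B_n^c$, which yields the renewal inequality $\mathbb{P}_M(T_M>n)\le\sum_{k\le n}\mathbb{P}_M(T_M>k-1)\,q_{n-k+1}$ with $q_m\le c(a)h^{-r}a^{rm}m^{-2}$ by \eqref{reg.tails.1a}; this inequality is then solved by a generating-function (discrete Gronwall) argument, choosing $h^r>4c(a)$ so that $\sum_m a^{-rm}q_m<1$. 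Note also that your calibration story for the exponent $2r+2$ is off: the term $O(n^{-2r-1})$ is summable for every $r>0$, so that alone would need much less; the exponent $2r+2$ is what makes the reweighted series $\sum_m a^{-rm}q_m\approx c\,h^{-r}\sum_m m^{-2}$ convergent (and small for large $h$), which is the step your sketch is missing.
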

\begin{remark}
It is immediate from \eqref{reg.tails.1} and \eqref{reg.tails.2} that
$$
\lambda_a=-r\log a
$$
for innovations satisfying \eqref{reg.tails.0} and \eqref{reg.tails.1a}. We conjecture that the same holds
under the assumption \eqref{reg.tails.0} and that the asymptotic behavior of the slowly varying
function $L$ affects lower order corrections only.
\hfill$\diamond$
\end{remark}
\begin{proof}[Proof of Proposition~\ref{prop:reg.tails}.]
Fix some $A>a^{-1}$ and define $u_k=MA^k$, $k\ge0$. Then
\begin{align*}
\mathbb{P}_{u_k}(X_1\ge u_{k-1})=\mathbb{P}(au_k+\xi_1>u_{k-1})=\mathbb{P}(\xi_1>-(aA-1)u_{k-1}). 
\end{align*}
Therefore, using the Markov property and the stochastic monotonicity of $X_n$, we get
\begin{align*}
\mathbb{P}_{u_k}(T_M>k-1)
&\ge\mathbb{P}_{u_k}(X_1\ge u_{k-1},X_2>u_{k-2},\ldots,X_{k-1}\ge u_1)\\
&=\prod_{j=2}^{k}\mathbb{P}_{u_j}(X_1\ge u_{j-1})=\prod_{j=1}^{k-1}\mathbb{P}(\xi_1>-(aA-1)u_{j}).
\end{align*}
It follows now from the assumption $\mathbb{E}\log(1+|\xi_1|)<\infty$ that
$$
\inf_{k\ge2}\prod_{j=1}^{k-1}\mathbb{P}(\xi_1>-(aA-1)u_{j})=:p(M,A)>0.
$$
This implies that
$$
\mathbb{P}_M(T_M>k)\ge p(M,A)\mathbb{P}(\xi_1>u_k).
$$
Since the tail of $\xi_1$ is regularly varying of index $-r$, we obtain
$$
\liminf_{k\to\infty}\frac{1}{k}\log\mathbb{P}_M(T_M>k)\ge -r\lim_{k\to\infty}\log u_k
=-r\log A.
$$
Letting now $A\downarrow a^{-1}$, we arrive at \eqref{reg.tails.1}.

In order to prove the second statement we consider events
$$
B_n:=\left\{\xi_k\le h\frac{a^{-(n-k+1)}}{(n-k+1)^2}\text{ for all }k<T_M\wedge n\right\}.
$$
On the event $\{T_M>n\}\cap B_n$ one has
\begin{align*}
X_n=a^nM+\sum_{k=1}^na^{n-k}\xi_k\le a^n M+\frac{h}{a}\sum_{k=1}^n\frac{1}{(n-k+1)^2}
\le aM+\frac{2h}{a}.
\end{align*}
Thus, for every $M\ge \frac{2}{a(1-a)}h$ and all $n\ge1$ one has $X_n\le M$ and,
consequently, $\{T_M>n\}\cap B_n=\emptyset$. This implies that
$$
\mathbb{P}_M(T_M>n)=\mathbb{P}_M(T_M>n;B_n^c)
\le\sum_{k=1}^n\mathbb{P}_M(T_M>k-1)\mathbb{P}\left(\xi_k>h\frac{a^{-(n-k+1)}}{(n-k+1)^2}\right).
$$
From the assumption~\ref{reg.tails.1a} on $L(x)$ we get
$$
\mathbb{P}\left(\xi_k>h\frac{a^{-(n-k+1)}}{(n-k+1)^2}\right)
\le\frac{c(a)}{h^r}\frac{a^{r(n-k+1)}}{(n-k+1)^2}.
$$
Therefore,
\begin{align*}
\mathbb{P}_M(T_M>n)\le\frac{c(a)}{h^r}\sum_{j=0}^{n-1}\mathbb{P}_M(T_M>j)\frac{a^{r(n-j)}}{(n-j)^2},
\quad n\ge1.
\end{align*}
Multiplying both sides with $s^n$ and summing over all $n$, we obtain
\begin{align*}
\sum_{n=0}^\infty s^n\mathbb{P}_M(T_M>n)
&\le 1+\sum_{n=1}^\infty s^n \frac{c(a)}{h^r}\sum_{j=0}^{n-1}\mathbb{P}_M(T_M>j)\frac{a^{r(n-j)}}{(n-j)^2}\\
&=1+\frac{c(a)}{h^r}\sum_{j=0}^\infty s^j\mathbb{P}_M(T_M>j)\sum_{n=j+1}^\infty\frac{(sa^r)^{n-j}}{(n-j)^2}\\
&=1+\frac{c(a)}{h^r}\sum_{j=0}^\infty s^j\mathbb{P}_M(T_M>j)\sum_{n=1}^\infty \frac{(sa^r)^{n}}{n^2}.
\end{align*}
In other words,
\begin{align*}
\sum_{n=0}^\infty s^n\mathbb{P}_M(T_M>n)\le 
\left(1-\frac{c(a)}{h^r}\sum_{n=1}^\infty \frac{(sa^r)^{n}}{n^2}\right)^{-1}.
\end{align*}
If $h$ is so large that $h^r>4c(a)$ then we have
\begin{align*}
\sum_{n=0}^\infty a^{-rn}\mathbb{P}_M(T_M>n)\le 2.
\end{align*}
This yields \eqref{reg.tails.2}.
\end{proof}
In order to see, that spectral properties remain relevant in the situation at hand we observe the following assertion formulated in terms of Tweedie's \textbf{R}-theory (see \cite{T74a} and \cite{T74b}).
\begin{corollary}
Assume that the conditions \eqref{reg.tails.0} and \eqref{reg.tails.1a} are satisfied. Then for $r$ large enough the operator $K_{\lambda_a}$ is well defined and the spectral radius $r(K_{\lambda_a})$ belongs to $[0,1]$. Under the assumption $r(K_{\lambda_a})<1$ the Laplace transform of $T_0$ remains bounded up to the critical line. In particular, the Submarkovian transition operator $P$ is $R$-transient in the sense of Tweedie. 
\end{corollary}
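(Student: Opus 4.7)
First I would verify that $K_{\lambda_a}$ is well defined on $C([0,r])$ once the threshold $r$ (in the definition of $\sigma_r,T_r$; not the regular variation index) is chosen sufficiently large. The key input is Proposition~\ref{prop:reg.tails}, which under \eqref{reg.tails.0} and \eqref{reg.tails.1a} yields $\sum_{n\ge 0}e^{\lambda_a n}\mathbb{P}_r(T_r>n)<\infty$ for large $r$, equivalently $\mathbb{E}_r[e^{\lambda_a T_r}]<\infty$. Plugging this into the argument of Corollary~\ref{cor:all.mom}, with the moment condition of Lemma~\ref{prop:all.mom} replaced by this direct tail bound, one obtains
\begin{equation*}
\sup_{x\in[0,r]}\mathbb{E}_x\bigl[e^{\lambda_a\sigma_r}\mathbf{1}_{\{T_0>\sigma_r\}}\mathbb{E}_{X_{\sigma_r}}[e^{\lambda_a T_r}]\bigr]<\infty\,,
\end{equation*}
and the equicontinuity/smoothing argument of Lemma~\ref{lem:K_lambda} then gives compactness of $K_{\lambda_a}$ without change.

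For the bound $r(K_{\lambda_a})\in[0,1]$, I would use the renewal equation \eqref{main_decomp}: for each $\lambda<\lambda_a$, $L_\lambda(x):=\mathbb{E}_x[e^{\lambda T_0}]$ is finite and solves $L_\lambda=F_\lambda+K_\lambda L_\lambda$, and since $F_\lambda\ge 1$ by Lemma~\ref{l:pos-function} and $K_\lambda\ge 0$, iteration gives $L_\lambda\ge K_\lambda^n\mathbf{1}$ for every $n$. Gelfand's formula applied to the positive operator $K_\lambda$ then forces $r(K_\lambda)\le 1$, and continuity of the spectral radius in $\lambda$ (Theorem~2.1 in \cite{D08}, as already used in the proof of Proposition~\ref{prop:gf}) passes the bound to $\lambda=\lambda_a$.

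Assuming in addition $r(K_{\lambda_a})<1$, the operator $I-K_{\lambda_a}$ is boundedly invertible, and by continuity of $z\mapsto K_z$ on a strip around the critical line $(I-K_z)^{-1}$ stays uniformly bounded in a neighbourhood of $\{\Re z=\lambda_a\}$. Combining this with boundedness of $F_z$ there (via the estimates of Lemma~\ref{l:pos-function}, extended to $\lambda_a$ using the same substitution of the tail bound from Proposition~\ref{prop:reg.tails}), the identity $L_z=(I-K_z)^{-1}F_z$ shows that the Laplace transform of $T_0$ stays bounded up to the critical line. In particular $\mathbb{E}_x[e^{\lambda_a T_0}]<\infty$, so that $\sum_n e^{\lambda_a n}\mathbb{P}_x(X_n\in A,T_0>n)\le \sum_n e^{\lambda_a n}\mathbb{P}_x(T_0>n)<\infty$ for every measurable $A\subset(0,\infty)$, which is precisely $R$-transience of $P_+$ at its convergence parameter $R=e^{\lambda_a}$ in the sense of \cite{T74a,T74b}.

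The main obstacle is the first step. Under regular variation of the innovations, the full-moment hypothesis $\mathbb{E}(\xi_1^+)^t<\infty$ for all $t$ that underlies Lemma~\ref{prop:all.mom} and Corollary~\ref{cor:all.mom} is no longer at our disposal, so one cannot simply quote those results. Instead, one has to feed the refined tail estimate from Proposition~\ref{prop:reg.tails} back into the aggregation argument of Lemma~\ref{prop:all.mom}, verifying that for $r$ large the logarithmic slack in condition~\eqref{reg.tails.1a} is enough to make the geometric iteration close and to guarantee that $K_{\lambda_a}\mathbf{1}$ is a bounded function on $[0,r]$.
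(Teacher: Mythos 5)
The paper states this corollary without proof, so I judge your proposal on its own terms. Your overall architecture is the natural one, and the later steps are sound: the iteration $L_\lambda\ge K_\lambda^n\mathbf 1$ of the renewal equation for real $\lambda<\lambda_a$ giving $r(K_\lambda)\le 1$, exactly as in the proof of Proposition~\ref{prop:gf}; passing to $\lambda_a$; the Neumann-series bound under $r(K_{\lambda_a})<1$ (here you do not even need a continuity argument along the critical line, since $|K_z^nf|\le K_{\Re z}^n|f|\le K_{\lambda_a}^n|f|$ gives $\|(I-K_z)^{-1}\|\le\sum_n\|K_{\lambda_a}^n\|$ for $\Re z\le\lambda_a$, and monotone convergence then yields $\mathbb E_x[e^{\lambda_a T_0}]<\infty$); and the translation into Tweedie's $R$-transience. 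The genuine gap is in your first step -- precisely the one you flag as the main obstacle -- because the mechanism you offer for it fails. Writing $\rho$ for the index in \eqref{reg.tails.0}, so that $e^{\lambda_a}=a^{-\rho}$ by the remark following Proposition~\ref{prop:reg.tails}, the aggregation scheme of Lemma~\ref{prop:all.mom} requires $\sum_j e^{\lambda j}q_j(r)$ to be small; at $\lambda=\lambda_a$ its $j$-th term is of order $j\,e^{\lambda_a j}\,\mathbb P(\xi_1>(1-aA)rA^{j-1})\asymp j\,(aA)^{-\rho j}L(rA^{j-1})$, which grows geometrically for every $A\in(1,1/a)$, and the logarithmic decay in \eqref{reg.tails.1a} cannot compensate a geometric factor. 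Moreover, even granting a Lemma~\ref{prop:all.mom}-type conclusion, its growth exponent $\lambda_a/\log A>\rho$ makes $\mathbb E_y[e^{\lambda_a T_r}]$ non-integrable against the overshoot $X_{\sigma_r}$, whose tail is regularly varying of index $-\rho$, so the argument of Corollary~\ref{cor:all.mom} cannot simply absorb it either; note also that Proposition~\ref{prop:reg.tails} controls $\mathbb E_r[e^{\lambda_a T_r}]$, i.e.\ the chain started at the level, whereas Corollary~\ref{cor:all.mom} needs control of $y\mapsto\mathbb E_y[e^{\lambda_a T_r}]$ for all large $y$, so ``plugging in'' is not literally available.

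The step can be repaired in two ways, and with either repair the rest of your argument goes through. Either rerun the envelope argument from the proof of Proposition~\ref{prop:reg.tails} (the events $B_n$ with thresholds $h\,a^{-(n-k+1)}/(n-k+1)^2$) from an arbitrary starting point $y>r$: this gives $\mathbb E_y[e^{\lambda_a T_r}]\le C\,(y/r)^{\rho}$, and this critical exponent is exactly integrable against the overshoot tail under \eqref{reg.tails.1a}, since $\int^\infty L(y)\,y^{-1}\,dy<\infty$. Or, more economically, use the monotonicity of $g(y):=\mathbb E_y[e^{\lambda_a T_r}]$ in $y$, the bound $X_{\sigma_r}\le ar+\xi_{\sigma_r}$ together with the conditioning as in \eqref{revis.2}, and the one-step decomposition $\mathbb E_r[e^{\lambda_a T_r}]=e^{\lambda_a}\mathbb P_r(X_1\le r)+e^{\lambda_a}\,\mathbb E_r\bigl[\mathbf 1_{\{X_1>r\}}g(X_1)\bigr]$, so that the finiteness of $\mathbb E_r[e^{\lambda_a T_r}]$ supplied by Proposition~\ref{prop:reg.tails} already bounds the overshoot-weighted inner expectation; combined with Lemma~\ref{lem:two-sided} this yields $\sup_{x\in[0,r]}K_{\lambda_a}\mathbf 1(x)<\infty$, and the same estimate gives boundedness of $F_{\lambda_a}$ and the remainder bounds needed for the compactness and continuity statements you invoke.
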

Until now a complete analysis of persistence probabilities including the effects of polynomial decay
factors as well as thorough investigation the quasistationary behaviour of AR(1) sequences with heavy
tailed innovations does not seem to exist and constitutes an interesting open problem.

\section{Discussion}\label{s:dis}
In this section we summarise the general ideas of the two main analytic approaches used in this work and comment on their possible applicability to other models. As a matter of fact both approaches use different tools but also share some structural similarites. For a Markov process $(X_n)_{n \in \mathbb{N}_0}$ we want to find the precise tail behaviour of the first hitting time $T_B$ of a measurable subset $B$ of the state space.  
\begin{itemize}
\item The first approach is in essence spectral theoretic. We first analyse the decay rate
\begin{displaymath}
\theta_B=-\lim_{n\rightarrow\infty}\frac{1}{n}\log \mathbb{P}_x\bigl(T_B> n\bigr)
\end{displaymath}
and make sure that $\theta_B$ does not depend on the starting point. The second step consists in showing that for a strictly larger measurable set $B'\supset B$ one has
\begin{equation}\label{e:discussion}
\theta_B < \theta_{B'}\,.
\end{equation}
This allows to introduce a suitable weighted Banach space and under appropriate conditions on the distribution of the innovations to prove the quasicompactness of the killed transition kernel. Application of a suitable result of Perron-Frobenius type allows to establish precise exponential decay of the tails of $T_B$. 
\item 
The second approach consists in analysing the Laplace transform 
\begin{displaymath}
 \lbrace z \in \mathbb{C} \mid \Re z<\theta_B\rbrace \ni \lambda \mapsto F_\lambda(x):= \mathbb{E}_x\bigl[e^{\lambda T_B}\bigr]
\end{displaymath}
near the abscissa of convergence $\theta_B$. We prove that $F_{\lambda}(x)$ has a meromorphic extension to $\lbrace z \in \mathbb{C} \mid \Re z<\theta_B+\varepsilon\rbrace $ for some $\varepsilon>0$ with $\lambda_B$ being a pole. This allows to deduce the precise exponential decay using a suitable Tauberian theorem. In order to prove the existence of a meromorphic extension we derive a renewal type equation in terms of the transition kernel $K_\lambda$. The existence of a meromorphic extension is shown using the analytic Fredholm alternative and suitable properties of $K_{\lambda}$. In particular, we need to show that the operators $K_{\lambda}$ are compact for all $\lambda$ with $\Re \lambda < \lambda_a+\varepsilon$ and satisfy the conditions of a suitable Perron-Frobenius theorem. Here results of the type given in \eqref{e:discussion} are again essential as well as absolute continuity and strict positivity of the transition kernel. 
\end{itemize}
We believe that these methods can be applied without big changes to some other Markov chain models. For example, to max-autoregressive processes or to random exchange processes, which are quite closely related to AR($1$)-sequences, see \cite{Zerner18}.

In both approaches, we have assumed that the underlying distributions are absolutely continuous with almost everywhere positive density and have sufficiently light tails. 
The positivity on the whole axis has been used only to give very simple proofs of the posivity of compact operators. We believe that this positivity can be shown also in the case when the density is zero on significant parts of the axis. Probably one will have the positivity of an appropriate power of the operator, but this does not restrict the applicability of Perron-Frobenius type results. The absolute continuity assumption seems to be really crucial for both approaches, and it is not clear whether one can replace it by the existence of an absolute continuous component. A very challenging problem is to study the case
of discrete innovations.

The second approach might also be applicable to chains with a certain periodicity, presumably resulting in poles of higher order. A different behaviour of the pole(s) also can be expected in the presence of distributions with regularly varying tails. Getting completely rid of absolute continuity seems so be much more tricky. Moreover, the stochastic monotonicity of AR(1)-sequences was extensively used, a lack of this property is expected to result in largely technical complications.

The authors of \cite{CV17} are able to prove similar results using a different approach. Their conditions are of the following types (see page 8 in \cite{CV17}):
\begin{itemize}
\item Local minorization of Doeblin-type
\item Two global Lyapunov criteria
\item Local Harnack inequality
\item Aperiodicity
\end{itemize}
The approach used by Champagnat and Villemonais is more related to coupling ideas of Doeblin type. The global Lypunov criteria are related to our condition \eqref{e:discussion} whereas the local minorization, the local Harnack inequality as well as the aperiodicity are closer to the condition used in our approaches to prove quasicompactness with a leading eigenvalue of mulitiplicity one in approach one and compactness of the renewal operator with a leading eigenvalue of multiplicity one in approach two.  All three approaches have their merits. As far as possible extensions to AR(1)-processes with fat tailed innovations are concerned, the approach in Section \ref{s:renewal} seems to be most promising. 
\vspace{6pt}

\textbf{Acknowledgement.} The authors would like to thank a referee for his useful report and for suggestion to add the final section. This project has been initiated during the subbatical stay of VW at the Technion, Israel. He thanks the Humboldt foundation and the Technion for the financial support. MK would like to thank his student P. Trykacz for critical reading of the final draft.

\end{document}